\newtheorem{lma}{Lemma}[section]
\newaliascnt{thmCt}{lma}
\newtheorem{thm}[thmCt]{Theorem}
\newaliascnt{corCt}{lma}
\newtheorem{cor}[corCt]{Corollary}
\newaliascnt{prpCt}{lma}
\newtheorem{prp}[prpCt]{Proposition}
\newtheorem*{thm*}{Theorem}
\newtheorem*{cor*}{Corollary}
\newtheorem*{prop*}{Proposition}
\newtheorem{thmx}{Theorem}
\newtheorem{corx}[thmx]{Corollary}
\theoremstyle{definition}
\newaliascnt{pgrCt}{lma}
\newtheorem{pgr}[pgrCt]{}
\newaliascnt{dfnCt}{lma}
\newtheorem{dfn}[dfnCt]{Definition}
\newaliascnt{rmkCt}{lma}
\newtheorem{rmk}[rmkCt]{Remark}
\newaliascnt{exaCt}{lma}
\newtheorem{exa}[exaCt]{Example}
\def\today{\number\day~\ifcase\month\or   January\or February\or
   March\or April\or May\or June\or   July\or August\or September\or
   October\or November\or December\fi\   \number\year}
\newcommand{\NN}{{\mathbb{N}}}
\newcommand{\RR}{{\mathbb{R}}}
\newcommand{\id}{{\mathrm{id}}}
\newcommand{\ev}{{\mathrm{ev}}}
\newcommand{\tr}{\mathrm{t}}
\newcommand{\tensProj}{\widehat{\otimes}}
\newcommand{\ca}{$C^*$-algebra}
\newcommand{\wkStar}{\operatorname{\mathrm{w}^*\!-\!}}
\newcommand{\weakStar}{weak${}^*$}
\newcommand{\freeVar}{\_\,}
\newcommand{\andSep}{\,\,\,\text{ and }\,\,\,}
\newcommand{\ltArensProd}{\square}
\newcommand{\rtArensProd}{\lozenge}
\newcommand{\ltArensI}[1]{#1^{\backprime}}
\newcommand{\ltArensII}[1]{#1^{\backprime\backprime}}
\newcommand{\ltArensIII}[1]{#1^{\backprime\backprime\backprime}}
\newcommand{\rtArensI}[1]{#1^{\prime}}
\newcommand{\rtArensII}[1]{#1^{\prime\prime}}
\newcommand{\rtArensIII}[1]{#1^{\prime\prime\prime}}
\DeclareMathOperator{\kernel}{ker}
\newcommand{\vcong}{\rotatebox[origin=c]{90}{$\cong$}}
\newcommand{\tensPredual}[2]{\overline{\otimes}_{#2}}
\newcommand{\Bdd}{{\mathcal{L}}}
\newcommand{\Approx}{{\mathcal{A}}}
\newcommand{\Cpct}{{\mathcal{K}}}
\title[Preduals and complementation of $\Bdd(X,Y)$]{Preduals and complementation of spaces of bounded linear operators}
\date{\today}
\author[Eusebio Gardella]{Eusebio Gardella}
\address{Eusebio Gardella. Mathematisches Institut, Universit\"at M\"unster, Einsteinstr.~62, 48149 M\"unster, Germany.}
\email{gardella@uni-muenster.de}
\urladdr{www.math.uni-muenster.de/u/gardella/}
\author{Hannes Thiel}
\address{Hannes Thiel. Mathematisches Institut, Universit\"at M\"unster, Einsteinstr.~62, 48149 M\"unster, Germany.}
\email{hannes.thiel@uni-muenster.de}
\urladdr{www.math.uni-muenster.de/u/hannes.thiel/}\thanks{The first named author was partially supported by a Postdoctoral Research Fellowship from the Humboldt Foundation.
Both authors were partially supported by the Deutsche Forschungsgemeinschaft (SFB 878 \emph{Groups, Geometry \& Actions}).
Part of the research was conducted at the Mittag-Leffler institute during the 2016 program on Classification of Operator Algebras: Complexity, Rigidity, and Dynamics.}
\subjclass[2010]{Primary:
47L05, 
47L10, 
Secondary:
47L45, 
46B10, 
}
\keywords{Predual, dual Banach algebra, dual module, complemented subspace}
\begin{document}

\begin{abstract}
For Banach spaces $X$ and $Y$, we establish a natural bijection between preduals of $Y$ and preduals of $\Bdd(X,Y)$ that respect the right $\Bdd(X)$-module structure.
If $X$ is reflexive, it follows that there is a unique predual making $\Bdd(X)$ into a dual Banach algebra.
This removes the condition that $X$ have the approximation property in a result of Daws.

We further establish a natural bijection between projections that complement $Y$ in its bidual and $\Bdd(X)$-linear projections that complement $\Bdd(X,Y)$ in its bidual.
It follows that $Y$ is complemented in its bidual if and only if $\Bdd(X,Y)$ is (either as a module or as a Banach space).

Our results are new even in the well-studied case of isometric preduals.
\end{abstract}

\maketitle

\section{Introduction}

Given a Banach space $X$, we study preduals of $\Bdd(X)$ in relation to preduals of $X$.
The most satisfactory results in this direction are obtained when $\Bdd(X)$ is regarded not only as a Banach space, but as a Banach algebra (more precisely, as a right $\Bdd(X)$-module).
Thus, we will focus on preduals of $\Bdd(X)$ for which right multiplication is \weakStar{} continuous.
One of our results implies that such preduals are in one-to-one correspondence with preduals of $X$;
see \autoref{thm:A} below.

In fact, the methods here developed apply equally well to analyzing preduals of $\Bdd(X,Y)$, considered with its natural right $\Bdd(X)$-module structure.
Moreover, shifting the attention from $\Bdd(X)$ to $\Bdd(X,Y)$ clarifies the different roles that the domain space $X$ and target space $Y$ play for properties of $\Bdd(X,Y)$.
For example, and somewhat interestingly, it turns out that preduals of $\Bdd(X,Y)$ are induced by preduals of $Y$, while preduals of $X$ do not play any role.
It is thus natural to address the more general problem of studying preduals of $\Bdd(X,Y)$ compatible with the right $\Bdd(X)$-action, in relation to preduals of $Y$.

A (concrete) predual of a Banach space $Y$ is a closed subspace $F\subseteq Y^*$ inducing a canonical isomorphism between $F^*$ and $Y$;
see \autoref{dfn:predualBSp:concretePredualBSp}.
If this isomorphism is isometric, we call $F$ an \emph{isometric} predual.
Existence and uniqueness of preduals has been extensively studied in various settings;
we refer to the survey article \cite{God89IsometricPredualsSurvey} by Godefroy, and the references therein.
For instance, by Sakai's theorem, a \ca{} has an isometric predual if and only if it is a von Neumann algebra, and the isometric predual of a von Neumann algebra is unique.
Similarly, by \cite[Proposition~5.10]{GodSap88DualitySpOpsSmoothNorms}, if $X$ is reflexive, then $\Bdd(X)$ has a unique isometric predual.

Our focus is on preduals that are not necessarily isometric.
Such preduals are not as well-studied: results do not simply carry over from isometric to general preduals, and new phenomena appear in the general setting.
For example, Sakai's theorem is no longer true for not-necessarily isometric preduals, and neither is the above mentioned result of Godefroy and Saphar;
see \autoref{exa:refl:notStrUnique}.

Every predual $F\subseteq Y^*$ induces a natural predual $X\tensPredual{X}{Y}F$ of $\Bdd(X,Y)$, which makes the right action of $\Bdd(X)$ on $\Bdd(X,Y)$ \weakStar{} continuous;
see \autoref{dfn:predualBXY:predualBXY} and \autoref{prp:predualBXY:predualBXY}.
We say that such a predual makes $\Bdd(X,Y)$ a right dual $\Bdd(X)$-module.
The converse is the main result of this paper:

\begin{thmx}[{See \autoref{prp:predualBXY:correspondencePreduals}}]
\label{thm:A}
Let $X$ and $Y$ be Banach spaces with $X\neq\{0\}$.
Then assigning to a predual $F\subseteq Y^*$ the predual $X\tensPredual{X}{Y}F\subseteq\Bdd(X,Y)^*$ induces a natural one-to-one correspondence between:
\begin{enumerate}
\item[(a)]
Concrete preduals of $Y$.
\item[(b)]
Concrete preduals of $\Bdd(X,Y)$ making it a right dual $\Bdd(X)$-module.
\end{enumerate}
\end{thmx}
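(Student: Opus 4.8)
The forward assignment lands in (b) by \autoref{prp:predualBXY:predualBXY}, so the plan is to produce an explicit inverse. First I would fix a nonzero $x_0\in X$ and, using Hahn--Banach, a functional $\phi\in X^*$ with $\phi(x_0)=1$. These data give two bounded maps: $\iota_\phi\colon Y\to\Bdd(X,Y)$ sending $y$ to the rank-one operator $\phi\otimes y$ (that is, $x\mapsto\phi(x)y$), and $\ev_{x_0}\colon\Bdd(X,Y)\to Y$ sending $T$ to $Tx_0$. One checks directly that $\ev_{x_0}\circ\iota_\phi=\id_Y$ and $\iota_\phi\circ\ev_{x_0}=R_P$, where $R_P$ denotes right multiplication by the idempotent $P=\phi\otimes x_0\in\Bdd(X)$. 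Dualizing, I would record the single computation $\iota_\phi^*(x\otimes f)=\phi(x)\,f$ for $x\in X$ and $f\in Y^*$, where $x\otimes f\in\Bdd(X,Y)^*$ is the functional $T\mapsto f(Tx)$. Since such functionals span $X\tensPredual{X}{Y}F$ and $\phi(x_0)=1$, this yields $\iota_\phi^*\big(X\tensPredual{X}{Y}F\big)=F$, which both proves that the forward map is injective and identifies $\iota_\phi^*$ as the only candidate for its inverse.

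For surjectivity, I would start from a predual $E\subseteq\Bdd(X,Y)^*$ as in (b) and set $F:=\iota_\phi^*(E)$. The dual-module hypothesis says precisely that each $R_S$ (for $S\in\Bdd(X)$) is \weakStar{} continuous, hence of the form $R_S=\lambda_S^*$ for a preadjoint $\lambda_S\colon E\to E$; in particular $E$ is invariant under every $R_S^*$, and $R_P^*|_E=\lambda_P$ is an idempotent on $E$. The key point is that $R_P$ is a \weakStar{} continuous projection of $\Bdd(X,Y)$ onto the corner $\iota_\phi(Y)\cong Y$, so the predual $E$ descends to this corner. Concretely, $\ev_{x_0}^*$ restricts to an isomorphism $F\to\lambda_P(E)=R_P^*(E)$, and dualizing the canonical identification of the complemented summand $\lambda_P(E)$ of $E$ with the complemented summand $\iota_\phi(Y)$ of $\Bdd(X,Y)=E^*$ shows that the canonical map $Y\to F^*$ is an isomorphism; thus $F$ is a concrete predual of $Y$.

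It then remains to check $X\tensPredual{X}{Y}F=E$. First I would prove the inclusion $X\tensPredual{X}{Y}F\subseteq E$. For $f\in F$ write $f=\iota_\phi^*(e)$ with $e\in E$; then $x_0\otimes f=\ev_{x_0}^*\iota_\phi^*(e)=R_P^*(e)=\lambda_P(e)\in E$. To pass from $x_0$ to an arbitrary $x\in X$, I would use the preadjoint action: a direct computation gives $\lambda_S(x_0\otimes f)=(Sx_0)\otimes f$, and since $E$ is $\lambda_S$-invariant while $\{Sx_0:S\in\Bdd(X)\}=X$ (here $x_0\neq0$ and $X\neq\{0\}$ are used), every $x\otimes f$ lies in $E$. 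As $E$ is norm-closed, $X\tensPredual{X}{Y}F\subseteq E$ follows. Now both $X\tensPredual{X}{Y}F$ (by \autoref{prp:predualBXY:predualBXY}) and $E$ are concrete preduals of $\Bdd(X,Y)$, with one contained in the other, so I would finish with the general observation that nested concrete preduals coincide: if $E'\subseteq E$ are concrete preduals of a Banach space $M$, the restriction map $E^*\to(E')^*$ is the composite of the two canonical isomorphisms $M\cong E^*$ and $M\cong(E')^*$, hence injective, so $E'$ is norm-dense in $E$ and, being closed, equals $E$.

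I expect the main obstacle to be the middle step: verifying that $F=\iota_\phi^*(E)$ is genuinely a concrete predual of $Y$. This is the only place where the abstract dual-module hypothesis must be converted into concrete preduality of $Y$, and it hinges on the idempotent $P\in\Bdd(X)$ producing a \weakStar{} continuous projection $R_P$ onto a copy of $Y$; the \weakStar{} continuity---equivalently, the $\lambda_P$-invariance of $E$---is exactly what lets the predual descend to the corner. The computation $\lambda_S(x_0\otimes f)=(Sx_0)\otimes f$ is then the device that spreads the information at the single point $x_0$ across all of $X$, and the nested-predual observation removes the remaining slack.
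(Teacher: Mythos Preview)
Your argument is correct and gives a self-contained proof of the bijection, but it takes a genuinely different route from the paper. The paper never constructs the inverse map directly on preduals; instead it passes through projections on biduals. Concretely, it uses the general correspondence between concrete preduals of a space $M$ and projections $M^{**}\to M$ with \weakStar{} closed kernel (\autoref{prp:predualBSP:summary}), then invokes the main technical result \autoref{prp:compl:complBidual} to match projections $Y^{**}\to Y$ with right $\Bdd(X)$-module projections $\Bdd(X,Y)^{**}\to\Bdd(X,Y)$ via the map $\alpha_{X,Y}$, and finally uses \autoref{prp:predualMod:charDualMod} to translate the module condition on the projection back into the dual-module condition on the predual. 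Your proof stays entirely on the predual side: the same rank-one idempotent $P=\theta_{x_0,\phi}$ that the paper exploits in \autoref{prp:compl:pi_from_r} to build $\pi_r$ from $r$ is used by you to carve out a \weakStar{}-complemented copy of $Y$ inside $\Bdd(X,Y)$ and to push the predual $E$ down to it; the invariance of $E$ under $R_S^*$ then spreads the single fibre $x_0\otimes F$ across $X$, and the nested-predual lemma closes the argument. What you gain is directness and independence from the $\alpha_{X,Y}$ machinery; what the paper's detour through projections buys is a unified framework that simultaneously yields the complementation results (\autoref{thm:B}, \autoref{cor:C}) and the equality of isomorphism constants $\isoConst(Y,F)=\isoConst(\Bdd(X,Y),X\tensPredual{X}{Y}F)$, which your argument does not immediately give.
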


We obtain this result by studying projections $\Bdd(X,Y)^{**}\to\Bdd(X,Y)$.
In general, considering $Y$ as a subspace of its bidual $Y^{**}$, it is folklore that there is a natural one-to-one correspondence between preduals of $Y$ and projections $Y^{**}\to Y$ with \weakStar{} closed kernel;
see \autoref{prp:predualBSP:summary}.
Every projection $\pi\colon Y^{**}\to Y$ induces a natural projection $r_\pi\colon\Bdd(X,Y)^{**}\to\Bdd(X,Y)$ that is a right $\Bdd(X)$-module map;
see \autoref{prp:compl:q_r_from_pi}.
The converse is the foundation for most new results of this paper:

\begin{thmx}[{See \autoref{prp:compl:complBidual}}]
\label{thm:B}
Let $X$ and $Y$ be Banach spaces with $X\neq\{0\}$.
Assigning to a projection $\pi\colon Y^{**}\to Y$ the projection $r_\pi\colon\Bdd(X,Y)^{**}\to\Bdd(X,Y)$ induces a natural one-to-one correspondence between:
\begin{enumerate}
\item[(a)]
Projections $Y^{**}\to Y$.
\item[(b)]
Projections $\Bdd(X,Y)^{**}\to\Bdd(X,Y)$ that are right $\Bdd(X)$-module maps.
\end{enumerate}
Moreover, the kernel of $\pi$ is \weakStar{} closed if and only if so is the kernel of $r_\pi$.
\end{thmx}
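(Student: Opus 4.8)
The plan is to show that $\pi \mapsto r_\pi$ is a bijection by producing an explicit inverse, and then to track \weakStar{} closedness of kernels through that inverse. The whole argument rests on two families of auxiliary maps. For $x \in X$ write $\ev_x \colon \Bdd(X,Y) \to Y$, $T \mapsto Tx$, and for $\phi \in X^*$ write $\iota_\phi \colon Y \to \Bdd(X,Y)$, $y \mapsto \phi\otimes y$, where $(\phi\otimes y)(x) = \phi(x)y$; their biadjoints $\ev_x^{**} \colon \Bdd(X,Y)^{**} \to Y^{**}$ and $\iota_\phi^{**} \colon Y^{**} \to \Bdd(X,Y)^{**}$ are \weakStar{}-to-\weakStar{} continuous, being adjoints. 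From the construction in \autoref{prp:compl:q_r_from_pi} the projection $r_\pi$ satisfies $\ev_x(r_\pi(\Phi)) = \pi(\ev_x^{**}(\Phi))$ for all $x$ and $\Phi$, and I take this identity as the working characterization of $r_\pi$.

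Two elementary computations drive everything. First, $\ev_x \circ \iota_\phi = \phi(x)\id_Y$, so after biadjoints $\ev_x^{**}\circ\iota_\phi^{**} = \phi(x)\id_{Y^{**}}$; fixing $(\phi_0,x_0)$ with $\phi_0(x_0)=1$ (possible since $X\neq\{0\}$, by Hahn--Banach) makes $\iota_{\phi_0}^{**}$ a \weakStar{}-continuous section of $\ev_{x_0}^{**}$. Second, and crucially, $\iota_\phi\circ\ev_x$ equals right multiplication on $\Bdd(X,Y)$ by the rank-one operator $\phi\otimes x\in\Bdd(X)$, since $(T\circ(\phi\otimes x))(x') = \phi(x')\,Tx = (\phi\otimes Tx)(x')$; taking biadjoints, $\iota_\phi^{**}\circ\ev_x^{**}$ is exactly the right action of $\phi\otimes x$ on $\Bdd(X,Y)^{**}$.

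For injectivity I would combine the first computation with the working characterization to obtain $\pi = \ev_{x_0}\circ r_\pi\circ\iota_{\phi_0}^{**}$, which recovers $\pi$ from $r_\pi$. For surjectivity, given a right $\Bdd(X)$-module projection $R\colon\Bdd(X,Y)^{**}\to\Bdd(X,Y)$, I define $\pi := \ev_{x_0}\circ R\circ\iota_{\phi_0}^{**}$; evaluating on the canonical image of $y\in Y$ and using $R\circ j_{\Bdd(X,Y)} = \id$ shows $\pi$ restricts to the identity on $Y$, hence is a projection onto $Y$. To prove $R = r_\pi$ it suffices to check $\ev_x(R(\Phi)) = \pi(\ev_x^{**}(\Phi))$ for all $x$ and $\Phi$. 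Here the second computation rewrites $\iota_{\phi_0}^{**}(\ev_x^{**}(\Phi))$ as the module action $\Phi\cdot(\phi_0\otimes x)$, the module property of $R$ pulls the action outside as $R(\Phi)\cdot(\phi_0\otimes x) = R(\Phi)\circ(\phi_0\otimes x)$, and evaluating at $x_0$ collapses this to $R(\Phi)(x)$ because $(\phi_0\otimes x)(x_0) = \phi_0(x_0)\,x = x$. This is the sole place where the module hypothesis is indispensable, and the main obstacle is reconciling the right-module action on the bidual used in \autoref{prp:compl:q_r_from_pi} with the biadjoint $(\iota_\phi\circ\ev_x)^{**}$ of right multiplication, so that the step $R(\Phi\cdot(\phi_0\otimes x)) = R(\Phi)\circ(\phi_0\otimes x)$ is legitimate.

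For the kernel statement I would argue on both sides using \weakStar{} continuity. Since $r_\pi(\Phi) = 0$ holds if and only if $\ev_x^{**}(\Phi)\in\kernel\pi$ for every $x$, one has $\kernel r_\pi = \bigcap_{x\in X}(\ev_x^{**})^{-1}(\kernel\pi)$; as each $\ev_x^{**}$ is \weakStar{}-continuous, \weakStar{} closedness of $\kernel\pi$ forces that of $\kernel r_\pi$. Conversely, the first computation yields $\iota_{\phi_0}^{**}(y^{**})\in\kernel r_\pi$ if and only if $\phi_0(x)\,\pi(y^{**}) = 0$ for all $x$, i.e.\ $y^{**}\in\kernel\pi$, so $\kernel\pi = (\iota_{\phi_0}^{**})^{-1}(\kernel r_\pi)$ and \weakStar{} closedness of $\kernel r_\pi$ gives that of $\kernel\pi$. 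Naturality of the correspondence in $X$ and $Y$ is routine from functoriality of the biadjoint, and I would only remark on it.
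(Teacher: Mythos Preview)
Your proposal is correct and follows essentially the same approach as the paper. Both construct the inverse $R \mapsto \ev_{x_0}\circ R\circ\iota_{\phi_0}^{**}$ (the paper's $\Theta_\eta$ is your $\iota_\phi$, and its $\theta_{y,\eta}$ is your $\phi\otimes y$), and both reduce the verification $R = r_{\pi}$ to the identity $\iota_\phi\circ\ev_x = R_{\phi\otimes x}$ together with the module hypothesis applied to the rank-one operator $\phi_0\otimes x$. The paper routes everything through the intermediate space $\Bdd(X,Y^{**})$ and the map $\alpha_{X,Y}$, whereas you work directly with $\ev_x^{**}$; since $\alpha_{X,Y}(\Phi)(x) = \ev_x^{**}(\Phi)$ this is only a cosmetic difference.

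Your stated ``main obstacle'' is not actually an obstacle: the right $\Bdd(X)$-module structure on $\Bdd(X,Y)^{**}$ is \emph{by definition} given by biadjoints of right multiplication (see \autoref{pgr:dualMod:unextDualMod}), so $(\iota_{\phi_0}\circ\ev_x)^{**}$ literally is the action of $\phi_0\otimes x$ and the step $R(\Phi\cdot(\phi_0\otimes x)) = R(\Phi)\circ(\phi_0\otimes x)$ is immediate. Your kernel argument via $\kernel\pi = (\iota_{\phi_0}^{**})^{-1}(\kernel r_\pi)$ is in fact a bit cleaner than the paper's, which tracks a \weakStar{}-convergent net. Finally, note that your proof only invokes the module property for rank-one operators, so, as the paper also records, the bijection already holds with ``right $\Approx(X)$-module projection'' in place of ``right $\Bdd(X)$-module projection''.
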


\autoref{thm:A} can be deduced from \autoref{thm:B} by noticing that a predual of $\Bdd(X,Y)^*$ makes it a right dual $\Bdd(X)$-module if and only if 
the associated projection $\Bdd(X,Y)^{**}\to\Bdd(X,Y)$ is a right $\Bdd(X)$-module map.
Another interesting consequence of Theorem~B is:

\begin{corx}[{See \autoref{prp:compl:charCompl}}]
\label{cor:C}
Let $X$ and $Y$ be Banach spaces with $X\neq\{0\}$.
Then the following are equivalent:
\begin{enumerate}
\item
$Y$ is complemented in its bidual.
\item
$\Bdd(X,Y)$ is complemented in its bidual as a Banach space.
\item
$\Bdd(X,Y)$ is complemented in its bidual as a right $\Bdd(X)$-module.
\end{enumerate}
\end{corx}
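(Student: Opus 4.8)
The plan is to derive Corollary~C directly from Theorem~B, using that theorem as the bridge between the module-theoretic statement and the purely Banach-space statement. Recall that a Banach space $Z$ is complemented in its bidual precisely when there exists a (bounded linear) projection $Z^{**}\to Z$, where $Z$ is identified with its canonical image in $Z^{**}$. So the three conditions unwind as follows: (1) asserts the existence of some projection $Y^{**}\to Y$; (2) asserts the existence of some projection $\Bdd(X,Y)^{**}\to\Bdd(X,Y)$; and (3) asserts the existence of such a projection that is moreover a right $\Bdd(X)$-module map.

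First I would prove the implication (1)$\Rightarrow$(3). Given any projection $\pi\colon Y^{**}\to Y$, Theorem~B produces the associated projection $r_\pi\colon\Bdd(X,Y)^{**}\to\Bdd(X,Y)$, which is by construction a right $\Bdd(X)$-module map; see also \autoref{pgr:compl:q_r_from_pi} and \autoref{prp:compl:q_r_from_pi}. This immediately witnesses condition~(3). The implication (3)$\Rightarrow$(2) is trivial, since a module-map projection is in particular a projection of Banach spaces, so forgetting the module structure gives exactly what~(2) requires. It remains to close the cycle with (2)$\Rightarrow$(1).

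The implication (2)$\Rightarrow$(1) is where the content lies, and I expect it to be the main point requiring care. Here we are given merely a Banach-space projection $\Bdd(X,Y)^{**}\to\Bdd(X,Y)$, with no module compatibility assumed, and we must manufacture a projection $Y^{**}\to Y$. The cleanest route is to fix any nonzero $x\in X$ together with a norming functional $\varphi\in X^*$ with $\varphi(x)=1$ (which exists by Hahn--Banach since $X\neq\{0\}$), and use these to realize $Y$ as a complemented subspace of $\Bdd(X,Y)$ via the maps $y\mapsto \varphi(\freeVar)\,y$ and $T\mapsto Tx$; these compose to the identity on $Y$ because $(\varphi(\freeVar)y)(x)=\varphi(x)y=y$. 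This exhibits $Y$ as a $1$-complemented (more precisely, complemented) subspace of $\Bdd(X,Y)$. Since being complemented in the bidual passes to complemented subspaces---if $Z$ is complemented in $W$ and $W$ is complemented in $W^{**}$, then $Z$ is complemented in $Z^{**}$, because the inclusion and projection between $Z$ and $W$ dualize twice to compatible maps on the biduals---condition~(2) forces condition~(1). I would verify the relevant diagram of double-adjoint maps commutes with the canonical embeddings $Y\hookrightarrow Y^{**}$ and $\Bdd(X,Y)\hookrightarrow\Bdd(X,Y)^{**}$, which is a routine naturality check.

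Having established (1)$\Rightarrow$(3)$\Rightarrow$(2)$\Rightarrow$(1), the three conditions are equivalent and the corollary follows. The main obstacle is the passage (2)$\Rightarrow$(1): one must be sure that an arbitrary Banach-space projection on $\Bdd(X,Y)^{**}$, which need not interact at all with the module structure, nonetheless restricts appropriately along the complementation of $Y$ inside $\Bdd(X,Y)$. The hereditary principle that complementation in the bidual is inherited by complemented subspaces is exactly what resolves this, and isolating that principle as the key lemma is the crux of the argument.
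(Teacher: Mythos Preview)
Your proposal is correct and follows essentially the same approach as the paper. The paper's proof of \autoref{prp:compl:charCompl} uses Theorem~B for (1)$\Leftrightarrow$(3), notes (3)$\Rightarrow$(2) is obvious, and for (2)$\Rightarrow$(1) invokes \autoref{prp:compl:pi_from_r}(1), which constructs the map $\pi_{r,x,\eta}=\ev_x\circ r\circ\Theta_\eta^{**}$; this is precisely the map your ``hereditary principle'' produces when unwound with $i=\Theta_\eta$ and $p=\ev_x$, so the only difference is that you package the step as a general lemma about complemented subspaces whereas the paper writes down the explicit composite.
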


As a further application, in \autoref{prp:refl:charYRefl_BXY} we characterize reflexivity of $Y$ in terms of $\Bdd(X,Y)$.
A Banach algebra $A$ with a predual is said to be a left (right) dual Banach algebra if the left (right) multiplication by a fixed element in $A$ is \weakStar{} continuous.
If $A$ is both a left and a right dual Banach algebra, then it is called a dual Banach algebra; see \cite{Run02BookAmen,Daw04PhD,Daw07DualBAlgReprInj}.
In the case $X=Y$, we obtain:

\begin{corx}
\label{cor:D}
For a Banach space $X$, the following are equivalent:
\begin{enumerate}
\item
$X$ is reflexive.
\item
$\Bdd(X)$ has a predual making it a (left) dual Banach algebra.
\item
$\Bdd(X)$ is complemented in its bidual as a left $\Bdd(X)$-module.
\item
There exists a projection $r\colon\Bdd(X)^{**}\to\Bdd(X)$ that is multiplicative for the left (equivalentely, for the right, or for both) Arens product on $\Bdd(X)^{**}$.
\end{enumerate}
\end{corx}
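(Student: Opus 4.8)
The plan is to prove that each of (2), (3) and (4) is equivalent to reflexivity of $X$, with the case $Y=X$ of \autoref{prp:refl:charYRefl_BXY} serving as the bridge between the module statement and reflexivity. I would dispatch the easy implications first. If $X$ is reflexive, then $X^{**}=X$ and $\pi=\id_X$ is a projection $X^{**}\to X$ with kernel $\{0\}$; feeding it into the construction of \autoref{pgr:compl:q_r_from_pi} produces a projection $r_\pi\colon\Bdd(X)^{**}\to\Bdd(X)$ with \weakStar{}-closed kernel that is simultaneously a left and a right $\Bdd(X)$-module map, whence (1)$\Rightarrow$(2) and (1)$\Rightarrow$(3). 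For the reductions among the predual statements I would invoke the folklore dictionary between preduals and projections with \weakStar{}-closed kernel (\autoref{prp:predualBSP:summary}) together with the characterization of dual modules (\autoref{prp:predualMod:charDualMod}): a predual making $\Bdd(X)$ a left dual Banach algebra is precisely a \weakStar{}-closed-kernel projection $\Bdd(X)^{**}\to\Bdd(X)$ that is a left module map, so (2)$\Rightarrow$(3) by forgetting the \weakStar{} condition. This closes the cycle $(1)\Rightarrow(2)\Rightarrow(3)\Rightarrow(1)$ once the core implication below is in hand.

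The conceptual core is (3)$\Rightarrow$(1): from a left-module-map projection $r\colon\Bdd(X)^{**}\to\Bdd(X)$ I want to manufacture a projection $\pi\colon X^{**}\to X$ with $\pi\circ S^{**}=S\circ\pi$ for every $S\in\Bdd(X)$, and then argue that this forces $X^{**}=X$. Fix (using $X\neq\{0\}$) vectors $x_0\in X$ and $\phi_0\in X^*$ with $\langle x_0,\phi_0\rangle=1$, and consider the left-module maps $\iota\colon X\to\Bdd(X)$, $\iota(x)=x\otimes\phi_0$, and $\ev_{x_0}\colon\Bdd(X)\to X$, $\ev_{x_0}(T)=Tx_0$, which satisfy $\ev_{x_0}\circ\iota=\id_X$. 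Setting $\pi=\ev_{x_0}^{**}\circ r\circ\iota^{**}$ gives a projection onto $X$, and since passing to biduals turns these module maps into intertwiners for the induced actions, the left-module property of $r$ yields $\pi\circ S^{**}=S\circ\pi$ for all $S$. Hence $\ker\pi$ is invariant under every second adjoint $S^{**}$. Testing this on a rank-one $S=x\otimes\phi$, for which $S^{**}\xi=\langle\xi,\phi\rangle\,x\in X$, any nonzero $\xi\in\ker\pi$ would produce a nonzero element of $X\cap\ker\pi=\{0\}$, a contradiction; thus $\ker\pi=\{0\}$ and $X$ is reflexive. Up to the $Y=X$ specialization this is exactly the content of \autoref{prp:refl:charYRefl_BXY}, which I would cite rather than repeat.

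Finally I would treat (4) by unwinding the two Arens products $\ltArensProd$ and $\rtArensProd$ on $\Bdd(X)^{**}$. The key elementary observation is that when one factor lies in $\Bdd(X)\subseteq\Bdd(X)^{**}$, both Arens products reduce to the canonical bimodule actions of $\Bdd(X)$ on $\Bdd(X)^{**}$. Consequently, if a projection $r$ onto $\Bdd(X)$ is multiplicative for either product, then specializing one factor to $\Bdd(X)$ shows that $r$ is automatically both a left and a right module map; in particular it is a left module map, so (4)$\Rightarrow$(3)$\Rightarrow$(1). Since the specialization argument is identical for $\ltArensProd$, for $\rtArensProd$, and for both at once, this also accounts for the parenthetical ``left, right, or both'' clause. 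For (1)$\Rightarrow$(4) I would return to the reflexive case and verify that the canonical projection $r_{\id}$ of the first paragraph is multiplicative for both Arens products: under reflexivity it is the projection attached to the (two-sided) dual-Banach-algebra predual of $\Bdd(X)$, and such projections intertwine the Arens products, here carrying both onto ordinary composition in $\Bdd(X)$.

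I expect the main obstacle to be the Arens-product bookkeeping in this last step: pinning down the mixed products $a\ltArensProd m$, $m\ltArensProd a$, $a\rtArensProd m$, $m\rtArensProd a$ against the paper's side conventions, and verifying the multiplicativity of $r_{\id}$ in the reflexive case. The module-theoretic steps, by contrast, are formal once one has the $\pi\rightsquigarrow r_\pi$ correspondence from \autoref{pgr:compl:q_r_from_pi} and the rank-one computation above.
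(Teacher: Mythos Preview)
Your proposal is correct and follows essentially the same route as the paper: the equivalence of (1)--(3) is precisely the case $Y=X$ of \autoref{prp:refl:charYRefl_BXY} (whose core implication, \autoref{prp:refl:q_leftMod}, you have reproduced in your second paragraph), and your treatment of (4) via the observation that Arens products with one factor in $\Bdd(X)$ collapse to the bimodule action is the argument the paper has in mind when it calls this step ``clear''. The only minor deviation is in (1)$\Rightarrow$(4): the paper invokes \autoref{prp:alpha:alphaBXX-multiplicative} directly to exhibit $\alpha_X$ as a projection multiplicative for both Arens products, whereas you reach the same conclusion by recognizing $r_{\id}$ as the projection attached to the dual-Banach-algebra predual and applying \autoref{prp:predualMod:charDualBAlg}; since $r_{\id}=\alpha_X$, these are two names for the same map. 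One small typo: in your definition of $\pi$ you want $\ev_{x_0}$ rather than $\ev_{x_0}^{**}$, as $r$ already lands in $\Bdd(X)$.
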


The equivalence between (1) and (2) above was previously obtained by Daws in~\cite{Daw04PhD}, but the algebraic characterization of reflexivity of $X$ in terms of the existence of a multiplicative projection $\Bdd(X)^{**}\to\Bdd(X)$ is new.

Next, we obtain uniqueness results for preduals of $\Bdd(X,Y)$ assuming that $Y$ is reflexive;
see \autoref{prp:refl:unique_BXY}.
Specializing to the case $X=Y$, we obtain:

\begin{corx}[{See \autoref{prp:refl:unique_BX}}]
\label{cor:E}
If $X$ is reflexive, then $X\tensProj X^*$ is the unique predual of $\Bdd(X)$ making it a right dual Banach algebra.
\end{corx}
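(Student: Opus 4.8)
The plan is to deduce Corollary~E from the preceding general machinery, specializing everything to the case $X = Y$ reflexive. Since $X$ is reflexive we have $X^{**} = X$, so the only projection $X^{**} \to X$ is the identity. The kernel of the identity is $\{0\}$, which is trivially \weakStar{} closed. Applying \autoref{thm:B} (with $Y = X$), the correspondence $\pi \mapsto r_\pi$ tells us that the \emph{only} projection $\Bdd(X)^{**} \to \Bdd(X)$ that is a right $\Bdd(X)$-module map is $r_{\id}$, and its kernel is \weakStar{} closed precisely because the kernel of $\id$ is. Translating through \autoref{prp:predualMod:charDualMod}, a predual of $\Bdd(X)$ making it a right dual Banach algebra corresponds exactly to a right $\Bdd(X)$-module projection onto $\Bdd(X)$ with \weakStar{} closed kernel; hence there is a unique such predual.

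Next I would identify this unique predual concretely. By the construction preceding \autoref{thm:A}, the predual of $\Bdd(X,Y)$ induced by a predual $F \subseteq Y^*$ is $X \tensPredual{X}{Y} F$. Taking $Y = X$ reflexive, the unique concrete predual of $Y = X$ is $F = X^*$ itself (since $X$ reflexive means $X^{**} = X$, so $X^* \subseteq X^*$ is the only predual, inducing the identity isomorphism). Therefore the unique predual of $\Bdd(X)$ making it a right dual Banach algebra is $X \tensPredual{X}{X} X^*$. The remaining point is to recognize that this predual coincides with the projective tensor product $X \tensProj X^*$. This is exactly the identification one expects: for reflexive $X$, the space $\Bdd(X)$ of bounded operators equals the dual of the projective tensor product $X \tensProj X^*$, via the standard trace-type duality pairing $\langle T, x \otimes \varphi\rangle = \varphi(Tx)$, so the module predual $X \tensPredual{X}{X} X^*$ is isometrically $X \tensProj X^*$.

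The main obstacle will be verifying that $X \tensPredual{X}{X} X^* = X \tensProj X^*$ as concrete preduals, rather than merely as abstract Banach spaces. The construction $X \tensPredual{X}{Y} F$ (from \autoref{dfn:predualBXY:predualBXY}) gives a specific closed subspace of $\Bdd(X,Y)^*$, and one must check that this subspace, under the duality between $\Bdd(X)$ and $X \tensProj X^*$, is realized as the canonical image of $X \tensProj X^*$. For reflexive $X$ the projective tensor product $X \tensProj X^*$ is genuinely a predual of $\Bdd(X) = \Bdd(X, X)$ (this is the Godefroy--Saphar setting), so the identification is essentially a matter of unwinding the definition of $X \tensPredual{X}{X} X^*$ and matching the pairings; I expect this to be routine once the definitions are in hand. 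Indeed, to complete the argument it suffices to observe that the natural predual $X \tensPredual{X}{X} X^*$ associated to $F = X^*$ is, by construction, spanned by functionals of the form $T \mapsto \varphi(Tx)$ for $x \in X$ and $\varphi \in X^*$, whose closed linear span in $\Bdd(X)^*$ is precisely the canonical copy of $X \tensProj X^*$.
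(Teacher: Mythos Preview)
Your proposal is correct and follows essentially the same route as the paper: specialize the bijection of \autoref{prp:compl:complBidual} (equivalently \autoref{prp:predualBXY:correspondencePreduals}) to $Y=X$ reflexive, where the unique projection $X^{**}\to X$ forces uniqueness of the right-module predual, and then identify that predual with $X\tensProj X^*$ via \autoref{pgr:refl:canonicalPredualBXY} and \autoref{prp:predualBXY:predualBXYIso}. Your final paragraph correctly isolates the one point requiring care---that $X\tensPredual{X}{X}X^*$ equals the canonical image of $X\tensProj X^*$ in $\Bdd(X)^*$---and this is exactly what \autoref{prp:predualBXY:predualBXYIso} gives (since $F=X^*$ is an isometric predual, $\beta_{X,X^*}=\omega_{X,X}$ is isometric).
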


In particular, if $X$ is reflexive, and if $A$ is any right dual Banach algebra with a (not necessarily isometric) Banach algebra isomorphism $\varphi\colon\Bdd(X)\to A$, then $\varphi$ and $\varphi^{-1}$ are automatically \weakStar{} continuous.
This improves a result of Daws, \cite[Theorem~4.4]{Daw07DualBAlgReprInj}, by removing an assumption while also strengthening the conclusion;
see the comments after \autoref{prp:refl:unique_BX}.

The results in this paper are the foundations for two other works of the authors; see \cite{GarThi17arX:ExtendingRepr} and \cite{GarThi16arX:ReprConvLq}.


\subsection*{Acknowledgements}

We thank Sven Raum for helpful discussions on \autoref{sec:alpha}.
We thank Matthew Daws, Gilles Godefroy, Bill Johnson, Philip Spain and Stuart White for valuable electronic correspondence.

\subsection*{Notation}

Fix Banach spaces $X$ and $Y$.
We let $\Bdd(X,Y)$ denote the Banach space of bounded, linear maps $X\to Y$.
We write $\Cpct(X,Y)$ and $\Approx(X,Y)$ for the compact and approximable operators (the norm-closure of the finite-rank operators $X\to Y$), respectively.
We write $\Bdd(X)$ for $\Bdd(X,X)$, and similarly with $\Cpct(X)$ and $\Approx(X)$.
For $x\in X$, we write $\ev_x\colon \Bdd(X,Y)\to Y$ for the evaluation map at $x$.
We let $X^{**}$ denote the bidual of $X$, and we let $\kappa_X\colon X\to X^{**}$ denote the canonical, isometric embedding.
We decorate a variable with one or two stars to indicate that it belongs to the dual or bidual of some space, for example $x^*\in X^*$ or $y^{**}\in Y^{**}$.
We denote the adjoint of a bounded, linear map $a\colon X\to Y$ by $a^\tr\colon Y^*\to X^*$.

\section{Preliminaries}

Let $X, Y$ and $Z$ denote Banach spaces; 
and let $A$ and $B$ denote Banach algebras.


\subsection{Preduals of Banach spaces, algebras and modules}
\label{sec:predualBSp}


\begin{dfn}
\label{dfn:predualBSp:predualBSp}
A \emph{predual} of $X$ is pair $(Y,\delta)$ consisting of a Banach space $Y$ and an isomorphism $\delta\colon X\to Y^*$.
The predual is called \emph{isometric} if $\delta$ is an isometric isomorphism.
Two preduals $(Y_1,\delta_1)$ and $(Y_2,\delta_2)$ are \emph{(isometrically) equivalent} if there exists an (isometric) isomorphism $\varphi\colon Y_2\to Y_1$ such that $\delta_1=\varphi^*\circ\delta_2$.
\end{dfn}

The following definition is inspired by \cite{DawHaySchWhi12ShiftInvPreduals}.

\begin{dfn}
\label{dfn:predualBSp:concretePredualBSp}
A \emph{concrete predual} of $X$ is a closed subspace $F\subseteq X^*$ such that $\iota_F^\tr\circ\kappa_X$ is an isomorphism, where $\iota_F\colon F\to X^*$ denotes the inclusion map.
Given a concrete predual $F\subseteq X^*$, we define maps
\[
\delta_F := \iota_F^\tr\circ\kappa_X \colon X\to F^*, \andSep
\pi_F := \delta_F^{-1}\circ\kappa_F^\tr\circ\delta_F^{\tr\tr}\colon X^{**}\to X.
\]
If $\delta_F$ is isometric, we call $F$ a \emph{concrete isometric predual} of $X$.
If no confusion may arise, we will call a concrete predual simply a predual.
\end{dfn}

\begin{rmk}
Observe that a predual $F$ is isometric if and only if $\|\delta_F^{-1}\|\leq 1$.
\end{rmk}

\begin{pgr}
\label{pgr:predualBsp:predualsConcrete}
Let $F\subseteq X^*$ be a predual.
The pair $(F,\delta_F)$ is a predual of $X$ in the sense of \autoref{dfn:predualBSp:predualBSp}.
Conversely, let $(Y,\delta)$ be a predual of $X$.
Let $F_\delta$ denote the image of the map $\delta^\tr\circ\kappa_Y\colon Y\to X^*$.
Then $F_\delta$ is a concrete predual of $X$, and
$(Y,\delta)$ is equivalent to $(F_\delta,\delta_{F_\delta})$.
It follows that two preduals $(Y_1,\delta_1)$ and $(Y_2,\delta_2)$ are equivalent if and only if $F_{\delta_1}=F_{\delta_2}$.
Hence, concrete preduals of $X$ naturally correspond to equivalence classes of preduals as in \autoref{dfn:predualBSp:predualBSp}.

If $(Y_1,\delta_1)$ and $(Y_2,\delta_2)$ are isometric preduals, then they are equivalent if and only if they are isometrically equivalent.
If $(Y,\delta)$ is an isometric predual, then $F_\delta$ is a concrete isometric predual, whence $(F_\delta,\delta_{F_\delta})$ is isometrically equivalent to $(Y,\delta)$.
\end{pgr}

\begin{pgr}
\label{pgr:predualBSp:predualProj}
Let $F\subseteq X^*$ be a predual with inclusion map $\iota_F\colon F\to X^*$.
Then $\kappa_F^\tr\circ\delta_F^{\tr\tr}=\iota_F^\tr$, and therefore $\pi_F =\delta_F^{-1}\circ\iota_F^\tr$.
It follows that $\pi_F\circ\kappa_X=\id_X$ and $\|\pi_F\|=\|\delta_F^{-1}\|$.
Further, the kernel of $\pi_F$ is \weakStar{} closed in $X^{**}$.

Conversely, let $\pi\colon X^{**}\to X$ be a projection with \weakStar{} closed kernel.
Set
\[
F_\pi := \big\{ x^*\in X^* : \langle x^*,x\rangle=0 \text{ for all } x\in\kernel(\pi) \big\},
\]
which is called the \emph{pre-annihilator} of $\kernel(\pi)$.
Then $F_\pi$ is a predual of $X$ and $\pi_{F_\pi}=\pi$.
Conversely, the projection $\pi_F$ associated to $F\subseteq X^*$ satisfies $F_{\pi_F}=F$.
\end{pgr}

We summarize the mentioned correspondences.

\begin{prp}
\label{prp:predualBSP:summary}
Let $X$ be a Banach space.
Then there are natural one-to-one correspondences between the following classes:
\begin{enumerate}
\item
Concrete preduals of $X$.
\item
Equivalence classes of preduals as in \autoref{dfn:predualBSp:predualBSp}.
\item
Projections $X^{**}\to X$ with \weakStar{} closed kernel.
\end{enumerate}
Moreover, given a predual $(Y,\delta)$ of $X$ we have
\[
\|\pi_{F_\delta}\|
= \min\left\{ \|\gamma\| \|\gamma^{-1}\| : (Z,\gamma) \text{ is predual of $X$ equivalent to } (Y,\delta) \right\}.
\]
\end{prp}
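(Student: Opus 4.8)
The plan is to assemble the three correspondences from the material already developed in \autoref{pgr:predualBsp:predualsConcrete} and \autoref{pgr:predualBSp:predualProj}, and then to verify the two displayed norm identities. For the bijection between (1) and (2), I would invoke \autoref{pgr:predualBsp:predualsConcrete}, where it is shown that $(Y,\delta)\mapsto F_\delta$ and $F\mapsto(F,\delta_F)$ exhibit concrete preduals as a complete, irredundant set of representatives for the equivalence classes of \autoref{dfn:predualBSp:predualBSp}, since two preduals are equivalent precisely when they induce the same concrete predual. For the bijection between (1) and (3), I would invoke \autoref{pgr:predualBSp:predualProj}, where the assignments $F\mapsto\pi_F$ and $\pi\mapsto F_\pi$ (the pre-annihilator of $\kernel(\pi)$) are shown to be mutually inverse, with $\kernel(\pi_F)$ always \weakStar{} closed. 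Composing these yields the correspondence between (2) and (3), and naturality is automatic since each construction is canonical.

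The first identity of the \emph{moreover} part is immediate: by \autoref{dfn:predualBSp:concretePredualBSp} one has $n(X,F_\delta)=\|\delta_{F_\delta}^{-1}\|$, while \autoref{pgr:predualBSp:predualProj} records $\|\pi_{F_\delta}\|=\|\delta_{F_\delta}^{-1}\|$. The content lies in identifying this common value with the minimum of $\|\varrho\|\,\|\varrho^{-1}\|$ over all preduals $(Z,\varrho)$ equivalent to $(Y,\delta)$, which I would establish as a lower bound together with the exhibition of a minimizer. (The trivial case $X=\{0\}$ I would dispose of separately, so assume $X\neq\{0\}$.)

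For the lower bound, given any predual $(Z,\varrho)$ equivalent to $(Y,\delta)$, I would form the canonical map $\Pi_\varrho:=\varrho^{-1}\circ\kappa_Z^*\circ\varrho^{**}\colon X^{**}\to X$, modelled on the formula for $\pi_F$. Using the naturality relation $\varrho^{**}\circ\kappa_X=\kappa_{Z^*}\circ\varrho$ and the identity $\kappa_Z^*\circ\kappa_{Z^*}=\id_{Z^*}$, one checks $\Pi_\varrho\circ\kappa_X=\id_X$, so that $\Pi_\varrho$ is a projection onto $X$; and since $\kappa_Z^*$ and $\varrho^{**}$ are \weakStar{} continuous, $\kernel(\Pi_\varrho)$ is \weakStar{} closed. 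The key step is then to verify $\Pi_\varrho=\pi_{F_\delta}$: writing the equivalence as $\varrho=\varphi^*\circ\delta_{F_\delta}$ for an isomorphism $\varphi\colon Z\to F_\delta$ and substituting $\varrho^{-1}=\delta_{F_\delta}^{-1}\circ(\varphi^{-1})^*$ and $\varrho^{**}=\varphi^{***}\circ\delta_{F_\delta}^{**}$, the $\varphi$-factors cancel via the naturality square $\kappa_Z^*\circ\varphi^{***}=\varphi^*\circ\kappa_{F_\delta}^*$ together with $(\varphi^{-1})^*\circ\varphi^*=\id_{F_\delta^*}$, leaving exactly $\delta_{F_\delta}^{-1}\circ\kappa_{F_\delta}^*\circ\delta_{F_\delta}^{**}=\pi_{F_\delta}$. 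Since $\|\kappa_Z^*\|=1$ and $\|\varrho^{**}\|=\|\varrho\|$, this yields $n(X,F_\delta)=\|\pi_{F_\delta}\|=\|\Pi_\varrho\|\leq\|\varrho\|\,\|\varrho^{-1}\|$.

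To see that the minimum is attained, I would observe that the concrete predual $(F_\delta,\delta_{F_\delta})$ is itself equivalent to $(Y,\delta)$ and compute $\|\delta_{F_\delta}\|=1$: from $\langle\delta_{F_\delta}(x),\eta\rangle=\langle\eta,x\rangle$ for $\eta\in F_\delta$, swapping the two suprema gives $\|\delta_{F_\delta}\|=\sup\{\|\eta\|:\eta\in F_\delta,\ \|\eta\|\leq1\}=1$ for $X\neq\{0\}$. Hence $\|\delta_{F_\delta}\|\,\|\delta_{F_\delta}^{-1}\|=\|\delta_{F_\delta}^{-1}\|=n(X,F_\delta)$, so the value $n(X,F_\delta)$ is realized and the infimum is a minimum. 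I expect the main obstacle to be the identity $\Pi_\varrho=\pi_{F_\delta}$: fixing the correct direction of the equivalence isomorphism and pushing the $\kappa$- and adjoint-naturality squares through accurately is where the bookkeeping is delicate, whereas the surrounding steps are routine norm estimates.
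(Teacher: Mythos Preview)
Your proposal is correct. In the paper this proposition is stated without proof, introduced merely as ``We summarize the mentioned correspondences''; the bijections are exactly the content of \autoref{pgr:predualBsp:predualsConcrete} and \autoref{pgr:predualBSp:predualProj}, and the identity $n(X,F_\delta)=\|\pi_{F_\delta}\|$ is recorded in \autoref{pgr:predualBSp:predualProj}. So your write-up supplies the details the paper leaves implicit, in particular for the minimum formula.

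One small simplification for the step you flag as delicate: instead of threading the equivalence isomorphism $\varphi$ through the adjoints to cancel it, you can argue more conceptually. Your map $\Pi_\varrho=\varrho^{-1}\circ\kappa_Z^*\circ\varrho^{**}$ is a projection onto $X$, and its kernel is $\{\sigma\in X^{**}:\langle\sigma,\varrho^*\kappa_Z(z)\rangle=0\text{ for all }z\in Z\}$, which is precisely the annihilator of $F_\varrho$ in $X^{**}$. Since $(Z,\varrho)\sim(Y,\delta)$ forces $F_\varrho=F_\delta$ by \autoref{pgr:predualBsp:predualsConcrete}, both $\Pi_\varrho$ and $\pi_{F_\delta}$ are projections from $X^{**}$ onto $X$ with the same kernel $F_\delta^\perp$, hence equal. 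This avoids the bookkeeping with $\varphi$ entirely. Also note that for the attainment you only need $\|\delta_{F_\delta}\|\leq 1$ (already in \autoref{pgr:predualBsp:predualsConcrete}); together with the lower bound this forces $\|\delta_{F_\delta}\|\,\|\delta_{F_\delta}^{-1}\|=n(X,F_\delta)$ without computing $\|\delta_{F_\delta}\|$ exactly.
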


\begin{dfn}
\label{pgr:predualBSp:unique}
Assume that $X$ has a (isometric) predual.
One says that $X$ has a \emph{unique (isometric) predual} if for any (isometric) preduals $(Y_1,\delta_1)$ and $(Y_2,\delta_2)$ the spaces $Y_1$ and $Y_2$ are (isometrically) isomorphic.
Equivalently, any two (isometric) preduals $F_1,F_2\subseteq X^*$ are (isometrically) isomorphic as Banach spaces.

Further, $X$ has a \emph{strongly unique predual} if any two preduals of $X$ are equivalent in the sense of \autoref{dfn:predualBSp:predualBSp}.
Analogously, $X$ has a \emph{strongly unique isometric predual} if $X$ has an isometric predual and if any two isometric preduals of $X$ are equivalent (or equivalently, isometrically equivalent).

It is not known if there exists a Banach space with unique but not strongly unique predual;
see \cite[Problem~2]{God89IsometricPredualsSurvey}.
\end{dfn}


In this work, we will mostly be interested in preduals of Banach algebras, and more generally, Banach modules.
The following is inspired in \cite[Definition~4.1]{Spa15ReprCaInDualBAlg}; note that we do not require 
the predual of a dual Banach algebra to be isometric.

\begin{dfn}
\label{dfn:predualMod:dualMod}
Let $A$ be a Banach algebra.
A \emph{left dual $A$-module} is a left $A$-module $\mathcal{E}$ together with a (concrete) predual $F\subseteq \mathcal{E}^*$ such that for each $a\in A$, its (left) action $L_a$ on $\mathcal{E}$ is \weakStar{} continuous.
Right dual $B$-modules are defined analogously.
A \emph{dual $A$-$B$-bimodule} is an $A$-$B$-bimodule with a predual giving it the structure of both a left dual $A$-module and a right dual $B$-module.

Given a predual for $A$, one says that $A$ is a \emph{(left, right) dual Banach algebra} if it is a (left, right) dual module over itself.
One says that $A$ is a \emph{dual Banach algebra} if it is a dual $A$-$A$-bimodule.
\end{dfn}

\begin{pgr}
\label{pgr:dualMod:unextDualMod}
Let $\mathcal{E}$ be an $A$-$B$-bimodule.
Then $\mathcal{E}^*$ has a natural $B$-$A$-bimodule structure, with left action of an element $b\in B$ on $\mathcal{E}^*$ given by the transpose of the right action of $b$ on $\mathcal{E}$, and analogously for the right action of $A$ on $\mathcal{E}^*$.
Similarly, $\mathcal{E}^{**}$ has an $A$-$B$-bimodule structure, and the map $\kappa_\mathcal{E}\colon \mathcal{E}\to \mathcal{E}^{**}$ is an $A$-$B$-bimodule map.
\end{pgr}

The map $\pi_F$ from \autoref{dfn:predualBSp:concretePredualBSp} can be used to characterize whether
an $A$-module is a dual $A$-module, as we show next.
Since the proof is elementary, we include it here for the convenience of the reader.

\begin{prp}
\label{prp:predualMod:charDualMod}
Let $\mathcal{E}$ be a left $A$-module, and let $F\subseteq \mathcal{E}^*$ be a predual.
Then the following are equivalent:
\begin{enumerate}
\item
$(\mathcal{E},F)$ is a left dual $A$-module.
\item
$F$ is a right sub-$A$-module of $\mathcal{E}^*$, that is, $FA\subseteq F$.
\item
$F$ has a right $A$-module structure such that $\delta_F$ is a left $A$-module map.
\item
$\pi_F$ is a left $A$-module map.
\end{enumerate}
Analogous results hold for right modules and bimodules.
In particular, if $\mathcal{E}$ is an $A$-$B$-bimodule, then $(\mathcal{E},F)$ is a dual $A$-$B$-bimodule if and only if $F$ is a sub-$B$-$A$-bimodule 
of $\mathcal{E}^*$, which is equivalent to $\pi_F$ being an $A$-$B$-bimodule map.
\end{prp}
\begin{proof}
We show the equivalence for a left $A$-module $\mathcal{E}$.
Let $m\colon A\times \mathcal{E}\to \mathcal{E}$ be the bilinear map defining the left $A$-module structure on $\mathcal{E}$.
Given $a\in A$, the map $L_a=m(a,\freeVar)$ is \weakStar{} continuous for the \weakStar{} topology induced by $F$ if and only if its transpose $L_a^\tr\colon \mathcal{E}^*\to \mathcal{E}^*$ leaves $F$ invariant.
The right action of $a$ on $\mathcal{E}^*$ is given by $L_a^\tr$.
Therefore, $L_a$ is \weakStar{} continuous if and only if $Fa\subseteq F$.
This implies the equivalence between~(1) and~(2).

Let us show that~(2) implies~(3).
We give $F$ the right $A$-module structure it inherits as a right sub-$A$-module of $\mathcal{E}^*$.
Then the inclusion map $\iota\colon F\to \mathcal{E}^*$ is a right $A$-module map.
Hence, $\iota^\tr\colon \mathcal{E}^{**}\to F^*$ is a left $A$-module map.
Since $\kappa_\mathcal{E}$ is also a left $A$-module map, we deduce that $\delta_F$ ($=\iota^\tr\circ\kappa_\mathcal{E}$) is a left $A$-module map.

Let us show that~(3) implies~(4).
Since $\kappa_F\colon F\to F^{**}$ is a right $A$-module map, the transpose $\kappa_F^\tr\colon F^{***}\to F^*$ is a left $A$-module map.
Since $\delta_F$ is a left $A$-module map, so are $\delta_F^{\tr\tr}$ and $\delta_F^{-1}$.
Hence, $\pi_F$ ($=\delta_F^{-1}\circ\kappa_F^\tr\circ\delta_F^{\tr\tr}$) is a left $A$-module map.

To show that~(4) implies~(2), let $\xi^*\in F\subseteq\mathcal{E}^*$ and $a\in A$.
To verify $L_a^\tr(\xi^*)\in F$, we recall from \autoref{pgr:predualBSp:predualProj} that $F$ is the preannihilator of $\kernel(\pi_F)$.
Given $\xi^{**}\in\kernel(\pi_F)\subseteq\mathcal{E}^{**}$, we have $L_a^{\tr\tr}(\xi^{**})\in\kernel(\pi_F)$ by assumption, and thus
\[
\langle L_a^\tr(\xi^*),\xi^{**}\rangle 
= \langle \xi^*,L_a^{\tr\tr}(\xi^{**})\rangle
= 0,
\]
and hence $L_a^\tr(\xi^*)\in F$, as desired.
\end{proof}

\subsection{Biduals of Banach algebras and modules}

In this subsection, we review some classical notions that originated in the groundbreaking works 
of Arens \cite{Are51AdjBilinOp,Are51OpsInducedFctClasses}; see also~\cite[Section~1.7]{Daw04PhD}.
\autoref{prp:dualMod:extDualMod} seems not to have appeared in the literature before.

\begin{pgr}
Let $m\colon X\times Y\to Z$ be a bilinear map.
Arens introduced two procedures to extend $m$ to a bilinear map $X^{**}\times Y^{**}\to Z^{**}$;
see \cite{Are51AdjBilinOp}.
One first defines bilinear maps $\ltArensI{m}\colon Z^*\times X\to Y^*$ and $\rtArensI{m}\colon Y\times Z^*\to X^*$ by
\[
\left\langle \ltArensI{m}(z^*,x),y \right\rangle
= \left\langle z^*, m(x,y) \right\rangle\quad\text{ and }\quad
\left\langle \rtArensI{m}(y,z^*),x \right\rangle
= \left\langle z^*, m(x,y) \right\rangle,
\]
for all $x\in X$, $y\in Y$, and $z^*\in Z^*$.

Applying the same construction again to $\ltArensI{m}$ and $\rtArensI{m}$, one obtains bilinear maps $\ltArensII{m}\colon Y^{**}\times Z^*\to X^*$ and $\rtArensII{m}\colon Z^*\times X^{**}\to Y^*$ given by
\[
\left\langle \ltArensII{m}(y^{**},z^*),x \right\rangle
= \left\langle y^{**}, \ltArensI{m}(z^*,x) \right\rangle\quad\text{ and }\quad
\left\langle \rtArensII{m}(z^*,x^{**}),y \right\rangle
= \left\langle x^{**}, \rtArensI{m}(y,z^*) \right\rangle,
\]
for all $x\in X$, $x^{**}\in X^{**}$, $y\in Y$, $y^{**}\in Y^{**}$ and $z^*\in Z^*$.

Applying the same procedure once again to $\ltArensII{m}$ and $\rtArensII{m}$, one obtains bilinear maps $\ltArensIII{m},\rtArensIII{m}\colon X^{**}\times Y^{**}\to Z^{**}$ given by
\begin{align*}
\left\langle \ltArensIII{m}(x^{**},y^{**}),z^* \right\rangle
= \left\langle x^{**}, \ltArensII{m}(y^{**},z^*) \right\rangle, \
\left\langle \rtArensIII{m}(x^{**},y^{**}),z^* \right\rangle
= \left\langle y^{**}, \rtArensII{m}(z^*,x^{**}) \right\rangle,
\end{align*}
for all $x^{**}\in X^{**}$, $y^{**}\in Y^{**}$ and $z^*\in Z^*$.
\end{pgr}

The following standard fact will be needed in the sequel.

\begin{lma}
\label{prp:dualMod:ArensTranspWkCts}
Let $m\colon X\times Y\to Z$ be a bilinear map.
Then $\ltArensI{m}, \ltArensII{m}$ and $\ltArensIII{m}$ are \weakStar{} continuous in their first variables,
and $\rtArensI{m}, \rtArensII{m}$ and $\rtArensIII{m}$ are \weakStar{} continuous in their second variables.
\end{lma}

\begin{dfn}
\label{dfn:actionsDualMod}
Let $\mathcal{E}$ be a left $A$-module, with module structure given by the bilinear map $m\colon A\times \mathcal{E}\to \mathcal{E}$.
Given $\xi^*\in \mathcal{E}^*$, $\xi^{\ast\ast}\in \mathcal{E}^{**}$ and $a^{**}\in A^{**}$, we set
\[
\xi^* a^{**} = \rtArensII{m}(\xi^*,a^{**}), \quad
a^{**}\ltArensProd \xi^{**} = \ltArensIII{m}(a^{**},\xi^{**}), \andSep
a^{**}\rtArensProd \xi^{**} = \rtArensIII{m}(a^{**},\xi^{**}).
\]

Analogously, if $\mathcal{F}$ is a right $B$-module, with module structure given by the map $n\colon \mathcal{F}\times B\to \mathcal{F}$, and given $\eta^*\in \mathcal{F}^*$, $\eta^{**}\in \mathcal{F}^{**}$ and $b^{**}\in B^{**}$, we set
\[
b^{**}\eta^*  = \ltArensII{n}(b^{**},\eta^*), \quad
\eta^{**}\ltArensProd b^{**} = \ltArensIII{n}(\eta^{**},b^{**}), \andSep
\eta^{**}\rtArensProd b^{**} = \rtArensIII{n}(\eta^{**},b^{**}).
\]

When $\mathcal{E}=A$, we obtain two products $\ltArensProd$ and $\rtArensProd$ on $A^{\ast\ast}$, called the 
\emph{left} and \emph{right Arens products}, respectively. 
\end{dfn}

The following is a straightforward extension of \cite[Theorem~2]{CabGarVil00ExtMultilinearOps}. 

\begin{prp}
\label{prp:dualMod:extDualMod}
Let $\mathcal{E}$ be a left $A$-module.
Then
\[
\xi^* (a^{**}\rtArensProd b^{**}) = (\xi^* a^{**})b^{**}, \ \
(a^{**}\ltArensProd b^{**})\ltArensProd \xi^{**} = a^{**}\ltArensProd (b^{**}\ltArensProd\xi^{**}),
\]
\[
\andSep  (a^{**}\rtArensProd b^{**})\rtArensProd \xi^{**} = a^{**}\rtArensProd (b^{**}\rtArensProd\xi^{**}),
\]
for all $\xi^*\in \mathcal{E}^*$, $\xi^{**}\in \mathcal{E}^{**}$, $a^{**},b^{**}\in A^{**}$.
In particular, $\mathcal{E}^{*}$ is naturally a right $(\mathcal{A}^{\ast\ast},\rtArensProd)$-module;
and $\mathcal{E}^{**}$ has natural structures as a left $(A^{\ast\ast},\ltArensProd)$-module as well as a left $(\mathcal{A}^{\ast\ast},\rtArensProd)$-module.

Analogously, if $\mathcal{F}$ is a right $B$-module, then
\[
(a^{**}\rtArensProd b^{**}) \eta^*  =  a^{**}(b^{**}\eta^*), \ \
\eta^{**}\ltArensProd(a^{**}\ltArensProd b^{**})  =  (\eta^{**}\ltArensProd a^{**}) \ltArensProd b^{**}, 
\]
\[
\andSep \eta^{**}\rtArensProd(a^{**}\rtArensProd b^{**}) = (\eta^{**}\rtArensProd b^{**})\rtArensProd b^{**},
\]
for all $\eta^*\in \mathcal{F}^*$, $\eta^{**}\in \mathcal{F}^{**}$, $a^{**},b^{**}\in A^{**}$.
\end{prp}

\begin{rmk}
\label{rmk:dualMod:extDualMod}
Let $\mathcal{E}$ be a $A$-$B$-bimodule. 
By \autoref{prp:dualMod:extDualMod}, $\mathcal{E}^*$ is a left $(B^{**},\ltArensProd)$-module and a right $(A^{**},\rtArensProd)$-module. 
However, these actions are not necessarily compatible and $\mathcal{E}^*$ need not be a $(B^{**},\ltArensProd)$-$(A^{**},\rtArensProd)$-bimodule.
Similarly, the left $(A^{**},\rtArensProd)$-module structure on $\mathcal{E}^{**}$ need not be compatible with the right $(B^{**},\ltArensProd)$-module structure.
For example, a unital Banach algebra $A$ is Arens regular (that is, $\ltArensProd=\rtArensProd$ on $A^{**}$) if and only if $A^*$ is a $(A^{**},\ltArensProd)$-$(A^{**},\rtArensProd)$-bimodule.
Indeed, for $a^{**},b^{**},c^{**}\in A^{**}$ and $\xi^*\in A^*$, we have
\[
\langle (b^{**}\ltArensProd a^{**})\rtArensProd c^{**},\xi^*\rangle
= \langle b^{**}\ltArensProd a^{**}, c^{**}\xi^*\rangle
= \langle a^{**}, (c^{**}\xi^*)b^{**}\rangle,
\]
and analogously
\[
\langle b^{**}\ltArensProd (a^{**}\rtArensProd c^{**}),\xi^*\rangle
= \langle a^{**}\rtArensProd c^{**}, \xi^*b^{**}\rangle
= \langle a^{**}, c^{**}(\xi^*b^{**})\rangle.
\]
Hence, if $A$ is Arens regular, then $(c^{**}\xi^*)b^{**}=c^{**}(\xi^*b^{**})$.
Conversely, to show that $b^{**}\ltArensProd c^{**}=b^{**}\rtArensProd c^{**}$, consider $a^{**}=1_A$.
\end{rmk}

Given a left $A$-module map $\varphi\colon \mathcal{E}\to \mathcal{F}$, it is shown in 
\cite[Lemma~3]{CabGarVil00ExtMultilinearOps} that $\varphi^{\tr\tr}$ is a left $(A^{**},\ltArensProd)$-module map.
We now generalize this result.

\begin{prp}
\label{prp:dualMod:extDualModFunctorial}
Let $\mathcal{E}$ and $\mathcal{F}$ be left $A$-modules, and
let $\varphi\colon \mathcal{E}\to \mathcal{F}$ be a left $A$-module map.
Then the transpose $\varphi^\tr\colon \mathcal{F}^*\to \mathcal{E}^*$ is a right $(A^{**},\rtArensProd)$-module map.
Further, the bitranspose $\varphi^{\tr\tr}\colon \mathcal{E}^{**}\to \mathcal{E}^{**}$ is both a left $(A^{**},\ltArensProd)$-module map and a left $(A^{**},\rtArensProd)$-module map.
Analogous results hold for right modules and bimodules.
\end{prp}
\begin{proof}
Let the left $A$-module structures on $\mathcal{E}$ and $\mathcal{F}$ be given by bilinear maps $m_\mathcal{E}\colon A\times \mathcal{E}\to \mathcal{E}$ and $m_\mathcal{F}\colon A\times \mathcal{F}\to \mathcal{F}$, respectively.
To show that $\varphi^\tr$ is a right $(A^{**},\rtArensProd)$-module map, let $\eta^{\ast}\in \mathcal{F}^*$ and $a^{**}\in A^{**}$.
Choose a net $(a_i)_{i\in I}$ in $A$ such that $a^{**}=\wkStar\lim_{i\in I}\kappa_A(a_i)$.
Using \autoref{prp:dualMod:ArensTranspWkCts} and that $\varphi^\tr$ is \weakStar{} continuous at the second step, and using that $\varphi^\tr$ is a right $A$-module map at the fourth step, we obtain
\begin{align*}
\varphi^\tr(\eta^{\ast} a^{**})
&= \varphi^\tr\big( \rtArensII{m_\mathcal{F}}(\eta^{\ast},\wkStar\lim_{i\in I}\kappa_A(a_i)) \big)
= \wkStar\lim_{i\in I} \varphi^\tr\big( \rtArensII{m_\mathcal{F}}(\eta^{\ast},\kappa_A(a_i)) \big) \\
&= \wkStar\lim_{i\in I} \varphi^\tr(\eta^{\ast} a_i)
= \wkStar\lim_{i\in I} \varphi^\tr(\eta^{\ast}) a_i
= \wkStar\lim_{i\in I} \rtArensII{m_\mathcal{E}}(\varphi^\tr(\eta^{\ast}),\kappa_A(a_i)) \\
&= \rtArensII{m_\mathcal{E}}(\varphi^\tr(\eta^{\ast}),\wkStar\lim_{i\in I}\kappa_A(a_i))
= \varphi^\tr(\eta^{\ast})a^{**}.
\end{align*}

To show that $\varphi^{\tr\tr}$ is a left $(A^{**},\rtArensProd)$-module map, let $a^{**}\in A^{**}$, let $\xi^{**}\in \mathcal{E}^{**}$, and let $\eta^{\ast}\in \mathcal{F}^*$.
Using that $\varphi^\tr$ is a right $(A^{**},\rtArensProd)$-module map at the third step, we get
\begin{align*}
\langle \varphi^{\tr\tr}(a^{**}\rtArensProd\xi^{**}), \eta^{\ast} \rangle
&= \langle a^{**}\rtArensProd\xi^{**}, \varphi^{\tr}(\eta^{\ast}) \rangle
= \langle \xi^{**}, (\varphi^{\tr}(\eta^*))a^{**} \rangle
= \langle \xi^{**}, \varphi^{\tr}(\eta^* a^{**}) \rangle \\
&= \langle \varphi^{\tr\tr}(\xi^{**}), \eta^{\ast} a^{**} \rangle
= \langle a^{**}\rtArensProd\varphi^{\tr\tr}(\xi^{**}), \eta^* \rangle.
\end{align*}

Finally, to show that $\varphi^{\tr\tr}$ is a left $(A^{**},\ltArensProd)$-module map, let $a^{**}\in A^{**}$ and $\xi^{**}\in \mathcal{E}^{**}$.
Choosing a net $(a_i)_{i\in I}$ in $A$ with $a^{**}=\wkStar\lim_{i\in I}\kappa_A(a_i)$, we argue as above and obtain
\begin{align*}
\varphi^{\tr\tr}(a^{**}\ltArensProd\xi^{**})
&= \varphi^{\tr\tr}\big( \ltArensIII{m_\mathcal{E}}(\wkStar\lim_{i\in I}\kappa_A(a_i),\xi^{**}) \big)
= \wkStar\lim_{i\in I} \varphi^{\tr\tr}(a_i\xi^{**})\\
&= \wkStar\lim_{i\in I} a_i\varphi^{\tr\tr}(\xi^{**}) 
= \wkStar\lim_{i\in I} \ltArensIII{m_\mathcal{F}}(\kappa_A(a_i),\varphi^{\tr\tr}(\xi^{**}))\\
&= \ltArensIII{m_\mathcal{F}}(a^{**},\varphi^{\tr\tr}(\xi^{**}))
= a^{**}\ltArensProd\varphi^{\tr\tr}(\xi^{**}).\qedhere
\end{align*}
\end{proof}

\section{The map \texorpdfstring{$\alpha_{X,Y}\colon\Bdd(X,Y)^{**}\to\Bdd(X,Y^{**})$}{alpha-XY:L(X,Y)**->L(X,Y**)}}
\label{sec:alpha}

Let $X$ and $Y$ be Banach spaces.
The right $\Bdd(X)$-module structure of $\Bdd(X,Y)$ induces a right $(\Bdd(X)^{**},\rtArensProd)$-module structure on $\Bdd(X,Y)^{**}$;
see \autoref{pgr:alpha:bimod-BXY}.
In \autoref{dfn:alpha:alphaXY}, we construct a natural map $\alpha_{X,Y}\colon\Bdd(X,Y)^{**}\to\Bdd(X,Y^{**})$ and show in
\autoref{prp:alpha:alpha-bimod-reduced} that $\alpha_{X,Y}$ respects the module structures, in the sense that
\[
\alpha_{X,Y}(a^{**}\rtArensProd f^{**}) = \alpha_{X,Y}(a^{**})\alpha_X(f^{**})
\]
for all $a^{**}\in\Bdd(X)^{**}$ and $f^{**}\in\Bdd(X,Y)^{**}$.

For $X=Y$, we conclude that the map $\alpha_X\colon\Bdd(X)^{**}\to\Bdd(X,X^{**})$ is multiplicative when $\Bdd(X)^{**}$ is equipped with the \emph{right} Arens product $\rtArensProd$;
see \autoref{prp:alpha:alpha-bimod-reduced}.

\begin{pgr}
\label{pgr:alpha:bimod-BXY}
\emph{$\Bdd(X,Y)$ as $\Bdd(Y)$-$\Bdd(X)$-bimodule.} Given $a\in\Bdd(X)$, we define the right action of $a$ on $\Bdd(X,Y)$ by $fa = f\circ a$ for all $f\in\Bdd(X,Y)$.
Analogously, given $b\in\Bdd(Y)$, we define the left action of $b$ by $bf = b\circ f$ for all $f\in\Bdd(X,Y)$.
It follows from \autoref{prp:dualMod:extDualMod} that $\Bdd(X,Y)^{**}$ has a natural left $(\Bdd(Y)^{**},\ltArensProd)$- and right $(\Bdd(X)^{**},\rtArensProd)$-module structure.
If $\Bdd(Y)$ or $\Bdd(Y)$ is Arens regular (for example, if $X$ or $Y$ is finite dimensional), then these module structures are compatible and $\Bdd(X,Y)^{**}$ is a $(\Bdd(Y)^{**},\ltArensProd)$-$(\Bdd(X)^{**},\rtArensProd)$-bimodule.
In general, however, these left and right module structures are not compatible;
see \autoref{rmk:dualMod:extDualMod}.
\end{pgr}

\begin{pgr}
\label{pgr:prelim:tensProj}
We denote by $X\otimes Y$ the algebraic (that is, uncompleted) tensor product of $X$ and $Y$.
Recall that the \emph{projective cross norm} of $t\in X\otimes Y$ is defined as
\[
\|t\|_{\pi}
= \inf\left\{ \sum_{k=1}^n \|x_k\|\|y_k\|\colon t=\sum_{k=1}^n x_k\otimes y_k \right\}.
\]
We have $\|x\otimes y\|_{\pi}=\|x\|\|y\|$ for every simple tensor $x\otimes y\in X\otimes Y$.
The projective tensor product $X\tensProj Y$ of $X$ and $Y$ is defined as the completion of $X\otimes Y$ in $\|\cdot\|_{\pi}$.
We refer to \cite{Rya02IntroTensProdBSp} for more on the rich theory of tensor products of Banach spaces.

It is well known that there is a natural, isometric isomorphism between $(X\tensProj Y)^*$ and $\Bdd(X,Y^*)$ given as follows:
An operator $a\in\Bdd(X,Y^*)$ defines a functional on $X\otimes Y$, given on simple tensors by $\langle a, x\otimes y \rangle = \langle a(x), y\rangle$.
The resulting functional on $X\tensProj Y$ has norm $\|a\|$.
Thus, for every $t\in X\otimes Y$ we have
\[
\|t\|_{\pi}
= \sup \big\{ \langle a, t\rangle : a\in\Bdd(X,Y^*), \|a\|\leq 1 \big\}.
\]
\end{pgr}

\begin{pgr}
\label{pgr:alpha:sliceMaps}
\emph{Slice maps.}
Given $x\in X$, we consider the map $x\otimes\freeVar \colon Y\to X\tensProj Y$ that sends $y\in Y$ to $x\otimes y$.
Given $x^*\in X^*$, we consider the slice map $R_{x^*}\colon X\tensProj Y\to Y$ which on simple tensors is given by $R_{x^*}(x\otimes y)=\langle x^*,x\rangle y$, for $x\in X$ and $y\in Y$.
Then $R_{x^*}$ is linear and $\|R_{x^*}\|=\|x^*\|$.
\end{pgr}

\begin{pgr}
\label{pgr:alpha:theta}
\emph{Rank-one operators.}
Given $y\in Y$ and $x^*\in X^*$, we let $\theta_{y,x^*}\in \Bdd(X,Y)$ be given by $\theta_{y,x^*}(x)=\langle x^*,x \rangle y$ for all $x\in X$.
For $a\in\Bdd(X)$ and $b\in\Bdd(Y)$, we have
\[
\theta_{y,x^*}a=\theta_{y,a^\tr x^*},\quad
b\theta_{y,x^*}=\theta_{by,x^*}, \quad
\theta_{y,x^*}^\tr=\theta_{x^*,\kappa_Y(y)}, \ \mbox{ and } \ 
\|\theta_{y,x^*}\|=\|y\|\cdot\|x^*\|.
\]

Given $x^*\in X^*$, define $\Theta_{x^*}\colon Y\to\Bdd(X,Y)$ by $\Theta_{x^*}(y)=\theta_{y,x^*}$ for all $y\in Y$.
Then $\Theta_{x^*}$ is linear and $\|\Theta_{x^*}\|=\|x^*\|$.
If $\|x^*\|=1$, then $\Theta_{x^*}$ is in fact isometric.
\end{pgr}

\begin{lma}
\label{lma:evThetawkstercts}
Let $x\in X$ and $x^*\in X^*$.
Then $\ev_x\colon\Bdd(X,Y^*)\to Y^*$ is the transpose of the map $x\otimes\freeVar\colon Y\to X\tensProj Y$, and $\Theta_{x^*}\colon Y^*\to\Bdd(X,Y^*)$ is the transpose of the slice map $R_{x^*}\colon X\tensProj Y\to Y$.
In particular, $\ev_x$ and $\Theta_{x^*}$ are \weakStar{} continuous.
\end{lma}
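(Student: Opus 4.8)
The plan is to work directly from the canonical isometric identification $(X\tensProj Y)^* \cong \Bdd(X,Y^*)$ recalled in \autoref{pgr:prelim:tensProj}, under which an operator $a\in\Bdd(X,Y^*)$ pairs with a simple tensor by $\langle a, x\otimes y\rangle = \langle a(x), y\rangle$. Under this identification $\Bdd(X,Y^*)$ is the dual of $X\tensProj Y$ while $Y^*$ is the dual of $Y$, so it is meaningful to ask whether $\ev_x$ and $\Theta_\eta$ are the transposes of bounded maps between the corresponding preduals. Both $x\otimes\freeVar\colon Y\to X\tensProj Y$ and $R_\eta\colon X\tensProj Y\to Y$ from \autoref{pgr:alpha:sliceMaps} are bounded, so their transposes are well-defined bounded maps; it then remains only to identify them.

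To obtain the first identity I would compute, for $a\in\Bdd(X,Y^*)$ and $y\in Y$,
\[
\langle (x\otimes\freeVar)^*(a), y\rangle = \langle a, x\otimes y\rangle = \langle a(x), y\rangle = \langle \ev_x(a), y\rangle,
\]
so that the functionals coincide and $\ev_x = (x\otimes\freeVar)^*$. For the second identity, since both $\Theta_\eta$ and $R_\eta^*$ are bounded maps $Y^*\to(X\tensProj Y)^*$ and the simple tensors span a dense subspace of $X\tensProj Y$, it suffices to test the functionals on a simple tensor $x\otimes y$. For $\psi\in Y^*$ one has on the one hand $\langle R_\eta^*(\psi), x\otimes y\rangle = \langle \psi, R_\eta(x\otimes y)\rangle = \langle\eta,x\rangle\langle\psi,y\rangle$, and on the other hand, using $\Theta_\eta(\psi) = \theta_{\psi,\eta}$,
\[
\langle \Theta_\eta(\psi), x\otimes y\rangle = \langle \theta_{\psi,\eta}(x), y\rangle = \langle\eta,x\rangle\langle\psi,y\rangle.
\]
These agree, so by density $R_\eta^*(\psi) = \Theta_\eta(\psi)$ for every $\psi\in Y^*$, giving $\Theta_\eta = R_\eta^*$.

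The final weak*-continuity assertion is then automatic, since the transpose $T^*\colon F^*\to E^*$ of any bounded operator $T\colon E\to F$ is weak*-to-weak* continuous; applying this to $T = x\otimes\freeVar$ and $T = R_\eta$ gives the claim for $\ev_x$ and $\Theta_\eta$. I do not anticipate any real difficulty here: the verification is a direct computation on simple tensors, and the only points requiring care are keeping the canonical identification and its pairing consistent --- in particular reading $\ev_x$ and $\Theta_\eta$ as maps into and out of $(X\tensProj Y)^*$ rather than $\Bdd(X,Y)$ --- and invoking density of $X\otimes Y$ in $X\tensProj Y$ to pass from agreement on simple tensors to equality of the functionals $R_\eta^*(\psi)$ and $\Theta_\eta(\psi)$.
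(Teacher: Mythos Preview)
Your proposal is correct and follows essentially the same route as the paper's own proof: both arguments verify the two transpose identities by direct computation of the dual pairings on simple tensors, invoke density of $X\otimes Y$ in $X\tensProj Y$ for the $\Theta_\eta$ identity, and then read off \weakStar{} continuity from the fact that transposes are automatically \weakStar{} continuous.
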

\begin{proof}
Both are routine computations, and we omit them.
\end{proof}


The following propositon immediately implies \cite[Theorem~4.2]{Spa15ReprCaInDualBAlg} and \cite[Proposition~4.2.1]{Daw04PhD}: 
$X$ is reflexive if and only if $\Bdd(X^*)$ is a left dual Banach algebra.

\begin{prp}
\label{prp:alpha:bimod-BXYd-wkStar}
Let $X$ and $Y$ be Banach spaces.
We identify $\Bdd(X,Y^*)$ with $(X\tensProj Y)^*$, and we consider $\Bdd(X,Y^*)$ with the induced \weakStar{} topology.

Let $a\in\Bdd(X)$ and $b\in\Bdd(Y^*)$.
Then the action of $a$ on $\Bdd(X,Y^*)$ is \weakStar{} continuous.
The action of $b$ on $\Bdd(X,Y^*)$ is \weakStar{} continuous if and only if there exists $c\in\Bdd(Y)$ such that $b=c^\tr$.
\end{prp}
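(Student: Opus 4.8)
The plan is to rely on the standard duality criterion: under the identification $\Bdd(X,Y^*)\cong(X\tensProj Y)^*$, a bounded linear operator on $\Bdd(X,Y^*)$ is \weakStar{} continuous if and only if it is the transpose of a bounded operator on the predual $X\tensProj Y$. For the two affirmative directions I would simply exhibit the pre-adjoint explicitly on simple tensors, and for the converse I would reduce to the analogous, classical description of \weakStar{} continuous operators on the dual space $Y^*$.

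For the right action $f\mapsto f\circ a$ of $a\in\Bdd(X)$, evaluating on a simple tensor gives $\langle f\circ a, x\otimes y\rangle = \langle f(a(x)), y\rangle = \langle f, a(x)\otimes y\rangle$, so its pre-adjoint is the operator $a\otimes\id_Y$ on $X\tensProj Y$ (bounded, of norm at most $\|a\|$); hence this action is \weakStar{} continuous. Likewise, if $b=c^*$ for some $c\in\Bdd(Y)$, then the left action $L_b\colon f\mapsto b\circ f$ satisfies $\langle b\circ f, x\otimes y\rangle = \langle f(x), c(y)\rangle = \langle f, x\otimes c(y)\rangle$ on simple tensors, so its pre-adjoint is $\id_X\otimes c$, and $L_b$ is \weakStar{} continuous.

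The crux is the converse for $b$: assuming $L_b$ is \weakStar{} continuous, I must produce $c\in\Bdd(Y)$ with $b=c^*$. Since no pre-adjoint is directly visible from $b$, the idea is to recover $b$ itself by sandwiching $L_b$ between the \weakStar{} continuous maps $\ev_x$ and $\Theta_\eta$ furnished by \autoref{lma:evThetawkstercts}. Fixing $x\in X$ and $\eta\in X^*$ with $\langle\eta,x\rangle=1$ (possible since $X\neq\{0\}$), the relations $b\circ\theta_{\varphi,\eta}=\theta_{b(\varphi),\eta}$ and $\theta_{b(\varphi),\eta}(x)=b(\varphi)$ from \autoref{pgr:alpha:theta} yield $\ev_x\circ L_b\circ\Theta_\eta=b$ as operators on $Y^*$. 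As a composition of \weakStar{} continuous maps, $b$ is then \weakStar{} continuous on $Y^*$, and the standard characterization identifies it as the adjoint $c^*$ of some $c\in\Bdd(Y)$.

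The main obstacle I anticipate is exactly this last direction, since the left multiplier $b$ lives on the target-dual $Y^*$ and carries no obvious pre-adjoint on $X\tensProj Y$; the sandwiching trick resolves this by transporting the question back to $Y^*$, where \weakStar{} continuity is classically understood. The remaining points — the choice of $x$ and $\eta$ (where the hypothesis $X\neq\{0\}$ enters) and the routine verification that the triple composition collapses to $b$ — are straightforward.
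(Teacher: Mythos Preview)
Your proposal is correct and follows essentially the same approach as the paper: exhibit the pre-adjoints $a\otimes\id_Y$ and $\id_X\otimes c$ on $X\tensProj Y$ for the two affirmative directions, and for the converse use the sandwich identity $\ev_x\circ L_b\circ\Theta_\eta=b$ together with \autoref{lma:evThetawkstercts} to conclude that $b$ is \weakStar{} continuous on $Y^*$. Your explicit remark that the choice of $x,\eta$ with $\langle\eta,x\rangle=1$ requires $X\neq\{0\}$ is a valid observation that the paper leaves implicit.
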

\begin{proof}
There is a left $\Bdd(X)$-module structure on $X\tensProj Y$ given by $\bar{a}(x\otimes y)=\bar{a}(x)\otimes y$ for $\bar{a}\in\Bdd(X)$, and a simple tensor $x\otimes y\in X\tensProj Y$.
We have
\[
\langle fa, x\otimes y \rangle
= \langle f(a(x)), y \rangle
= \langle f, a(x)\otimes y \rangle
\]
for all $f\in \Bdd(X,Y^*)$.
It follows that the action of $a$ on $\Bdd(X,Y^*)$ is given as the transpose of the action of $a$ on $X\tensProj Y$, whence it is \weakStar{} continuous.

Similarly, given $c\in\Bdd(Y)$, there is a bounded, linear map $X\tensProj Y\to X\tensProj Y$ that maps a simple tensor $x\otimes y\in X\tensProj Y$ to $x\otimes c(y)$.
Then
\[
\langle c^\tr f, x\otimes y \rangle
= \langle c^\tr(f(x)), y \rangle
= \langle f(x), c(y) \rangle
= \langle f, x\otimes c(y) \rangle,
\]
which shows that the action of $c^\tr$ is \weakStar{} continuous.

Let $L_b\colon\Bdd(X,Y^*)\to\Bdd(X,Y^*)$ denote the left action of $b$ on $\Bdd(X,Y^*)$.
Choose $x\in X$ and $x^*\in X^*$ with $\langle x^*,x \rangle = 1$.
We claim that $\ev_x\circ L_b\circ\Theta_{x^*}\colon Y^*\to Y^*$ coincides with $b$.
Indeed, for every $y^*\in Y^*$, we have
\[
(\ev_x\circ L_b\circ\Theta_{x^*})(y^*)
= (b\Theta_{x^*}(y^*))(x)
= b(\theta_{y^*,x^*}(x))
= b(\langle x^*,x\rangle y^*)
= b(y^*).
\]
Now assume that $L_b$ is \weakStar{} continuous.
By \autoref{lma:evThetawkstercts}, the maps $\ev_x$ and $\Theta_{x^*}$ are \weakStar{} continuous as well.
It follows that $b$, as a map $Y^*\to Y^*$, is \weakStar{} continuous.
This implies that there exists $c\in\Bdd(Y)$ such that $b=c^\tr$, as desired.
\end{proof}


\begin{dfn}
\label{dfn:alpha:psi}
Let $X$ and $Y$ be Banach spaces.
Let $\mu_{X,Y}\colon Y^*\tensProj X\to X\tensProj Y^*$ be the (unique) isometric isomorphism that satisfies $\mu_{X,Y}(y^*\otimes x)=x\otimes y^*$ for all $x\in X$ and $y^*\in Y^*$.
Identifying $\Bdd(X,Y^{**})$ with $(X\tensProj Y^*)^*$ and $\Bdd(Y^*,X^*)$ with $(Y^*\tensProj X)^*$, we let $\psi_{X,Y}\colon \Bdd(X,Y^{**}) \to \Bdd(Y^*,X^*)$ be the map induced by the transpose of $\mu_{X,Y}$.
We write $\psi_X$ for $\psi_{X,X}$.
\end{dfn}

\begin{prp}
\label{prp:alpha:psi}
Let $X$ and $Y$ be Banach spaces. 
Then $\psi_{X,Y}$ is a \weakStar{} continuous, isometric isomorphism. For $f\in\Bdd(X,Y^{**})$ and $g\in\Bdd(Y^*,X^*)$, we have
\[
\psi_{X,Y}(f)= f^\tr\circ\kappa_{Y^*}\colon Y^*\to X^* \andSep
\psi_{X,Y}^{-1}(g)=g^\tr\circ\kappa_X\colon X\to Y^{**}.
\]
\end{prp}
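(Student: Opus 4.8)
The plan is to unwind all of the identifications in \autoref{dfn:alpha:psi} and reduce the two explicit formulas to checking a single pairing identity on simple tensors. Throughout I use the canonical isometric isomorphism $(X\tensProj Z)^*\cong\Bdd(X,Z^*)$ recalled in \autoref{pgr:prelim:tensProj}, under which an operator $a\in\Bdd(X,Z^*)$ corresponds to the functional determined on simple tensors by $\langle a,x\otimes z\rangle=\langle a(x),z\rangle$.

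First I would compute $\psi_{X,Y}$ at the level of pairings. Given $f\in\Bdd(X,Y^{**})$, regarded as a functional on $X\tensProj Y^*$ via $\langle f,x\otimes\eta\rangle=\langle f(x),\eta\rangle$, the transpose $\mu_{X,Y}^*$ produces the functional on $Y^*\tensProj X$ satisfying
\[
\langle\mu_{X,Y}^*(f),\eta\otimes x\rangle
=\langle f,\mu_{X,Y}(\eta\otimes x)\rangle
=\langle f,x\otimes\eta\rangle
=\langle f(x),\eta\rangle
\]
for all $x\in X$ and $\eta\in Y^*$. Reading this back through the identification $(Y^*\tensProj X)^*\cong\Bdd(Y^*,X^*)$, it says that $g:=\psi_{X,Y}(f)$ is the unique operator in $\Bdd(Y^*,X^*)$ with $\langle g(\eta),x\rangle=\langle f(x),\eta\rangle$ for all $x\in X$ and $\eta\in Y^*$.

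Next I would verify the two closed formulas against this characterization. For the first, I compute, for $x\in X$ and $\eta\in Y^*$,
\[
\langle (f^*\circ\kappa_{Y^*})(\eta),x\rangle
=\langle\kappa_{Y^*}(\eta),f(x)\rangle
=\langle f(x),\eta\rangle,
\]
using the definition of the adjoint $f^*\colon Y^{***}\to X^*$ and the fact that $\kappa_{Y^*}(\eta)$ acts on $Y^{**}$ by evaluation at $\eta$; this matches the characterization, so $\psi_{X,Y}(f)=f^*\circ\kappa_{Y^*}$. For the inverse, given $g\in\Bdd(Y^*,X^*)$ I set $f:=g^*\circ\kappa_X$ and check symmetrically that
\[
\langle f(x),\eta\rangle
=\langle\kappa_X(x),g(\eta)\rangle
=\langle g(\eta),x\rangle,
\]
so that $f$ satisfies the defining identity for $\psi_{X,Y}^{-1}(g)$, whence $\psi_{X,Y}^{-1}(g)=g^*\circ\kappa_X$.

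Finally, the structural properties require no extra work. Under the identifications fixed in \autoref{dfn:alpha:psi}, the map $\psi_{X,Y}$ is literally the transpose $\mu_{X,Y}^*$, and the \weakStar{} topologies on source and target are by definition those coming from $(X\tensProj Y^*)^*$ and $(Y^*\tensProj X)^*$. Since $\mu_{X,Y}$ is an isometric isomorphism of Banach spaces, its transpose is again an isometric isomorphism, and the transpose of any bounded operator is automatically \weakStar{}--\weakStar{} continuous; hence $\psi_{X,Y}$ is a \weakStar{} continuous isometric isomorphism. The only point demanding any care is the bookkeeping of the pairings and of the canonical embeddings $\kappa_X$ and $\kappa_{Y^*}$; there is no genuine obstacle, as the content is entirely formal once the identifications are written out explicitly.
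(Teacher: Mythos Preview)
Your proof is correct and follows essentially the same approach as the paper: both unwind the definition of $\psi_{X,Y}$ via the pairing on simple tensors $\eta\otimes x$ and verify the formula $\psi_{X,Y}(f)=f^*\circ\kappa_{Y^*}$ by the same chain of identities. The only cosmetic difference is that the paper establishes the inverse formula by applying the forward formula to $g^*\circ\kappa_X$ and simplifying $\kappa_X^*\circ g^{**}\circ\kappa_{Y^*}=g$, whereas you verify the defining pairing identity for $\psi_{X,Y}^{-1}(g)$ directly; both are equally valid, and your explicit remark on \weakStar{} continuity (which the paper leaves implicit) is a welcome addition.
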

\begin{proof}
It is clear that $\psi_{X,Y}$ is an isometric isomorphism.
Given $f\in\Bdd(X,Y^{**})$ and a simple tensor $y^*\otimes x\in Y^*\otimes X$, we have
\begin{align*}
\left\langle \psi_{X,Y}(f), y^*\otimes x \right\rangle
&= \left\langle f, x\otimes y^* \right\rangle 
= \left\langle f(x), y^* \right\rangle
= \left\langle x, f^\tr(\kappa_{Y^*}(y^*)) \right\rangle \\
&= \left\langle f^\tr\circ\kappa_{Y^*}, y^*\otimes x \right\rangle.
\end{align*}
It follows that $\psi_{X,Y}(f)= f^\tr\circ\kappa_{Y^*}$, as desired.

Given $g\in\Bdd(Y^*,X^*)$, the following computation shows that $\psi_{X,Y}^{-1}(g)=g^\tr\circ\kappa_X$:
\[
\psi_{X,Y}(g^\tr\circ\kappa_X)
= (g^\tr\circ\kappa_X)^\tr\circ\kappa_{Y^*}
= \kappa_X^\tr\circ g^{\tr\tr}\circ\kappa_{Y^*}
= g.\qedhere
\]
\end{proof}

\begin{pgr}
\label{pgr:alpha:BAlgBXXdd}
Given $a,b\in\Bdd(X,X^{**})$, we define their product as $ab=\kappa_{X^*}^\tr\circ a^{\tr\tr}\circ b$.
It is straightforward to check that this gives $\Bdd(X,X^{**})$ the structure of a unital Banach algebra, the unit being $\kappa_X$.

We define a natural $\Bdd(Y,Y^{**})$-$\Bdd(X,X^{**})$-bimodule structure on $\Bdd(X,Y^{**})$ by
\[
fa=\kappa_{Y^*}^\tr\circ f^{\tr\tr}\circ a, \andSep bf=\kappa_{Y^*}^\tr\circ b^{\tr\tr}\circ f,
\]
for $f\in\Bdd(X,Y^{**})$, $a\in\Bdd(X,X^{**})$, and $b\in\Bdd(Y,Y^{**})$.
\end{pgr}

\begin{prp}
\label{prp:alpha:psiXXdd}
Let $X$ and $Y$ be Banach spaces, let $f\in\Bdd(X,Y^{**})$, let $a\in\Bdd(X,X^{**})$, and let $b\in\Bdd(Y,Y^{**})$.
Then
\[
\psi_{X,Y}(fa)=\psi_X(a)\psi_{X,Y}(f) \andSep
\psi_{X,Y}(bf)=\psi_{X,Y}(f)\psi_Y(b).
\]
In particular, the map $\psi_X\colon \Bdd(X,X^{**}) \to \Bdd(X^*)$ is a \weakStar{} continuous, isometric anti-isomorphism of Banach algebras.
\end{prp}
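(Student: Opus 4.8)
The plan is to verify both module identities by direct computation from the explicit description of $\psi_{X,Y}$ in \autoref{prp:alpha:psi}, the definitions of the two actions in \autoref{pgr:alpha:bimod-BXYdd-reduced}, and the product on $\Bdd(X,X^{**})$ from \autoref{pgr:alpha:BAlgBXXdd}. Everything should reduce to naturality of the canonical embedding: for every operator $T\colon V\to W$ one has $\kappa_W\circ T=T^{**}\circ\kappa_V$, and taking adjoints of this identity gives the companion formula $\kappa_{V^*}\circ T^*=T^{***}\circ\kappa_{W^*}$.

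First I would treat the right action. Writing $\psi_{X,Y}(g)=g^*\circ\kappa_{Y^*}$ and $fa=\kappa_{Y^*}^*\circ f^{**}\circ a$, I would take the adjoint of $fa$ to obtain
\[
\psi_{X,Y}(fa)=a^*\circ f^{***}\circ\kappa_{Y^*}^{**}\circ\kappa_{Y^*},
\]
while the formulas $\psi_X(a)=a^*\circ\kappa_{X^*}$ and $\psi_{X,Y}(f)=f^*\circ\kappa_{Y^*}$ give
\[
\psi_X(a)\,\psi_{X,Y}(f)=a^*\circ\kappa_{X^*}\circ f^*\circ\kappa_{Y^*}.
\]
It then suffices to identify $f^{***}\circ\kappa_{Y^*}^{**}\circ\kappa_{Y^*}$ with $\kappa_{X^*}\circ f^*\circ\kappa_{Y^*}$. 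I would apply naturality to the operator $\kappa_{Y^*}\colon Y^*\to Y^{***}$ to rewrite $\kappa_{Y^*}^{**}\circ\kappa_{Y^*}=\kappa_{Y^{***}}\circ\kappa_{Y^*}$, and then the adjoint form of naturality applied to $f\colon X\to Y^{**}$ to get $f^{***}\circ\kappa_{Y^{***}}=\kappa_{X^*}\circ f^*$; chaining these two yields the desired equality.

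The left-action identity $\psi_{X,Y}(bf)=\psi_{X,Y}(f)\,\psi_Y(b)$ is entirely parallel: adjoining $bf=\kappa_{Y^*}^*\circ b^{**}\circ f$ produces $f^*\circ b^{***}\circ\kappa_{Y^*}^{**}\circ\kappa_{Y^*}$, and the same two naturality steps (now using $b^{***}\circ\kappa_{Y^{***}}=\kappa_{Y^*}\circ b^*$ for $b\colon Y\to Y^{**}$) bring it to $f^*\circ\kappa_{Y^*}\circ b^*\circ\kappa_{Y^*}=\psi_{X,Y}(f)\circ\psi_Y(b)$. For the concluding assertion I would specialize to $Y=X$ and observe that the right $\Bdd(X,X^{**})$-action on $\Bdd(X,X^{**})$ is exactly the product of \autoref{pgr:alpha:BAlgBXXdd}; the first identity then reads $\psi_X(fa)=\psi_X(a)\,\psi_X(f)$, exhibiting $\psi_X$ as an anti-homomorphism into $\Bdd(X^*)$ with its composition product. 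Since \autoref{prp:alpha:psi} already furnishes that $\psi_X=\psi_{X,X}$ is a \weakStar{} continuous isometric isomorphism of Banach spaces, this promotes it to a \weakStar{} continuous isometric anti-isomorphism of Banach algebras.

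I expect the only genuine subtlety to be bookkeeping within the tower of iterated duals, and specifically resisting the false shortcut $\kappa_{Y^*}^{**}=\kappa_{Y^{***}}$: these two maps $Y^{***}\to Y^{*****}$ do differ in general. What is true, and all that the argument uses, is their agreement after precomposition with $\kappa_{Y^*}$, which is precisely the content of naturality applied to $\kappa_{Y^*}$ itself.
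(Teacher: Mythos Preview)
Your argument is correct and follows the same route as the paper's proof: compute $\psi_{X,Y}(fa)$ from the explicit formula $\psi_{X,Y}(g)=g^*\circ\kappa_{Y^*}$, expand the adjoint of $fa=\kappa_{Y^*}^*\circ f^{**}\circ a$, and reduce the resulting expression using naturality of the canonical embeddings. Your two-step justification of
\[
f^{***}\circ\kappa_{Y^*}^{**}\circ\kappa_{Y^*}=\kappa_{X^*}\circ f^*\circ\kappa_{Y^*}
\]
is in fact more careful than the paper's, which asserts the stronger identity $f^{***}\circ\kappa_{Y^*}^{**}=\kappa_{X^*}\circ f^*$; the latter fails in general (the left-hand side is weak${}^*$-continuous on $Y^{***}$ while the right-hand side need not be), though it does hold after precomposing with $\kappa_{Y^*}$, which is all the computation actually uses. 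Your explicit warning about not conflating $\kappa_{Y^*}^{**}$ with $\kappa_{Y^{***}}$ is exactly the right observation.
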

\begin{proof}
Using \autoref{prp:alpha:psi} at the second step, and using that $f^{\tr\tr\tr} \circ \kappa_{Y^*}^{\tr\tr}=\kappa_{X^*} \circ f^\tr$ at the fourth step, we get
\begin{align*}
\psi_{X,Y}(fa)
&= \psi_{X,Y}(\kappa_{Y^*}^\tr\circ f^{\tr\tr}\circ a)
= (\kappa_{Y^*}^\tr\circ f^{\tr\tr}\circ a)^\tr\circ\kappa_{Y^*}
= a^\tr \circ f^{\tr\tr\tr} \circ \kappa_{Y^*}^{\tr\tr} \circ \kappa_{Y^*} \\
&= a^\tr \circ \kappa_{X^*} \circ f^\tr \circ \kappa_{Y^*}
= \psi_X(a)\psi_{X,Y}(f),
\end{align*}
as desired.
The second identity is shown analogously.
\end{proof}

\begin{pgr}
\label{pgr:alpha:gammaXY}
\emph{The map $\gamma_{X,Y}$.}
Given Banach spaces $X$ and $Y$, we let $\gamma_{X,Y}\colon\Bdd(X,Y)\to\Bdd(X,Y^{**})$ be the isometric map given by $\gamma_{X,Y}(f) =\kappa_Y\circ f$, for all $f\in\Bdd(X,Y)$.
We write $\gamma_X$ for $\gamma_{X,X}$.
We give $\Bdd(X,Y)$ the natural $\Bdd(Y)$-$\Bdd(X)$-bimodule structure from \autoref{pgr:alpha:bimod-BXY}.
Let $f\in\Bdd(X,Y)$.
Given $a\in\Bdd(X)$, we have
\[
\gamma_{X,Y}(fa)
= \kappa_Y\circ f\circ a
= \kappa_{Y^*}^\tr\circ(\kappa_Y\circ f)^{\tr\tr} \circ\kappa_X\circ a
= \gamma_{X,Y}(f)\gamma_X(a).
\]
Similarly, $\gamma_{X,Y}(bf) = \gamma_Y(b)\gamma_{X,Y}(f)$ for all $b\in \Bdd(Y)$.
It follows that $\gamma_X=\gamma_{X,X}\colon\Bdd(X)\to\Bdd(X,X^{**})$ is a homomorphism of Banach algebras.
It is easy to check that the composition $\psi_{X,Y}\circ\gamma_{X,Y}\colon\Bdd(X,Y)\to\Bdd(Y^*,X^*)$ is given by the transpose map $f\mapsto f^\tr$.
\end{pgr}

\begin{rmk}
\label{rmk:alpha:BAlgBXXdd}
Let $a\in\Bdd(X,X^{**})$ and $b\in\Bdd(Y,Y^{**})$.
Note that the left action of $b$ on $\Bdd(X,Y^{**})$ is \weakStar{} continuous by \autoref{prp:alpha:bimod-BXYd-wkStar}.
Moreover, since $c=\psi_X(\gamma_X(c))$ for all $c\in \Bdd(X)$, it follows that  
the right action of $a$ on $\Bdd(X,Y^{**})$ is \weakStar{} continuous if and only if there exists $c\in\Bdd(X)$ such that $a=\gamma_X(c)$.

In particular, and in view of \autoref{prp:alpha:psiXXdd}, while $\Bdd(X^*)$ is \emph{right} dual Banach algebra, $\Bdd(X,X^{**})$ is \emph{left} dual Banach algebra.
\end{rmk}

\begin{pgr}
\label{pgr:alpha:bimod-BXYdd-full}
\emph{$\Bdd(X,Y^{**})$ as a left $(\Bdd(Y)^{**},\ltArensProd)$- and a right $(\Bdd(X)^{**},\rtArensProd)$-module.}
The tensor product $X\tensProj Y^*$ has a natural $\Bdd(X)$-$\Bdd(Y)$-bimodule structure given by
\[
a(x\otimes y^*) = a(x)\otimes y^* \andSep
(x\otimes y^*)b = x\otimes b^\tr(y^*),
\]
for all $a\in\Bdd(X)$, $b\in\Bdd(Y)$, $x\in X$, and $y^*\in Y^*$.
Since $(X\tensProj Y^*)^*\cong\Bdd(X,Y^{**})$ canonically, it follows that $\Bdd(X,Y^{**})$ has both a left $(\Bdd(Y)^{**},\ltArensProd)$- and a right $(\Bdd(X)^{**},\rtArensProd)$-module structure;
see \autoref{prp:dualMod:extDualMod}.
However, these left and right module structures are not necessarily compatible;
see \autoref{rmk:dualMod:extDualMod}.

Let $f\in\Bdd(X,Y^{**})$.
Given $a\in\Bdd(X)$, it is straightforward to check that $f\kappa_{\Bdd(X)}(a) = f\gamma_X(a)$.
It follows that the right action of $\kappa_{\Bdd(Y)}(a)$ on $\Bdd(X,Y^{**})$ is given by precomposing with $a$, which agrees with the right action of $\gamma_X(a)$.

A similar argument shows that $\kappa_{\Bdd(Y)}(b)f = b^{\tr\tr}\circ f = \gamma_Y(b)f$ for all $b\in\Bdd(Y)$.
\end{pgr}

\begin{pgr}
\label{pgr:alpha:omegaXY}
\emph{The map $\omega_{X,Y}$.}
We let
$\omega_{X,Y}\colon X\tensProj Y^* \to\Bdd(X,Y)^*$,
be the natural, contractive, linear map
determined by $\omega_{X,Y}(x\otimes y^*)(f)=\langle f(x), y^*\rangle$, for all $x\in X$, all $y^*\in Y^*$ and all $f\in\Bdd(X,Y)$.
(It is a routine exercise to show that $\omega_{X,Y}$ is well-defined and contractive.)
\end{pgr}

\begin{dfn}
\label{dfn:alpha:alphaXY}
Let $X$ and $Y$ be Banach spaces.
Identifying $\Bdd(X,Y^{**})$ with the dual of $X\tensProj Y^*$, we define the map
\[
\alpha_{X,Y}\colon\Bdd(X,Y)^{**} \to \Bdd(X,Y^{**})
\]
as the transpose of the map $\omega_{X,Y}$.
We write $\alpha_X$ for $\alpha_{X,X}$.
\end{dfn}

\begin{lma}
\label{pgr:alpha:charAlpha}
Let $f^{**}\in\Bdd(X,Y)^{**}$ and let $x\in X$.
Then 
\[
\alpha_{X,Y}(f^{**})(x) = \ev_x^{\tr\tr}(f^{**}), \andSep
\alpha_{X,Y}\circ\kappa_{\Bdd(X,Y)}=\gamma_{X,Y}.
\]
\end{lma}
\begin{proof}
To check the first identity, let $r_x\colon Y^*\to X\tensProj Y^*$ be given by 
$r_x(y^*)=x\otimes y^*$ for all $y^*\in Y^*$.
It is easily verified that $\omega_{X,Y}\circ\ r_x = \ev_x^\tr$.
It follows that $r_x^\tr\circ\alpha_{X,Y} = \ev_x^{\tr\tr}$ and that 
$f(x)=r_x^\tr(f)$ for all $f\in\Bdd(X,Y^{**})$.
Taking $f=\alpha_{X,Y}(f^{**})$, we get
\[
\alpha_{X,Y}(f^{**})(x)
= r_x^\tr(\alpha_{X,Y}(f^{**}))
= \ev_x^{\tr\tr}(f^{**}).
\]

To check the second identity, let $f\in\Bdd(X,Y)$.
Using the first identity at the first step, we conclude that 
\[
(\alpha_{X,Y}\circ\kappa_{\Bdd(X)})(f)(x)
= \ev_x^{\tr\tr}(\kappa_{\Bdd(X)}(f))
= \kappa_Y(f(x))
= \gamma_{X,Y}(f)(x).
\qedhere
\]
\end{proof}

In the following two lemmas, we use the module structures described in
Paragraphs 3.9 (for the right-hand sides) and 3.13 (for the left-hand sides).

\begin{lma}
\label{prp:alpha:bimod-BXYdd-relation}
Let $a^{**}\in\Bdd(X)^{**}$, let $f\in\Bdd(X,Y^{**})$, let $b^{**}\in\Bdd(Y)^{**}$, and let $g\in\Bdd(X,Y)$.
Then
$fa^{**} = f\alpha_X(a^{**})$ and $b^{**}\gamma_{X,Y}(g) = \alpha_Y(b^{**})\gamma_{X,Y}(g)$.
\end{lma}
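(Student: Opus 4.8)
The plan is to establish each of the two identities by a density-and-continuity argument for the \weakStar{} topology. Both sides of the first identity are functions of $S\in\Bdd(X)^{**}$ (with $f$ fixed), and both sides of the second are functions of $T\in\Bdd(Y)^{**}$ (with $g$ fixed). Since $\kappa_{\Bdd(X)}(\Bdd(X))$ is \weakStar{} dense in $\Bdd(X)^{**}$ and $\kappa_{\Bdd(Y)}(\Bdd(Y))$ is \weakStar{} dense in $\Bdd(Y)^{**}$ (Goldstine), it suffices to check that the two sides agree on these dense subsets and that all four maps are \weakStar{} continuous in the algebra variable.

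The agreement on the dense subsets is already contained in \autoref{pgr:alpha:bimod-BXYdd-full}. Indeed, for $a\in\Bdd(X)$ that paragraph shows $f\kappa_{\Bdd(X)}(a)=f\gamma_X(a)$, while \autoref{prp:alpha:alpha-kappa-gamma} gives $\gamma_X(a)=\alpha_X(\kappa_{\Bdd(X)}(a))$; taking $S=\kappa_{\Bdd(X)}(a)$ these combine to $fS=f\alpha_X(S)$. Symmetrically, for $b\in\Bdd(Y)$ one has $\kappa_{\Bdd(Y)}(b)\gamma_{X,Y}(g)=\gamma_Y(b)\gamma_{X,Y}(g)$ together with $\gamma_Y(b)=\alpha_Y(\kappa_{\Bdd(Y)}(b))$, yielding the second identity on the dense subset.

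It remains to verify \weakStar{} continuity. The left-hand sides are the module actions of \autoref{pgr:alpha:bimod-BXYdd-full}; these are the dual-module structures furnished by \autoref{prp:dualMod:extDualMod}, for which the action is \weakStar{} continuous in the $\Bdd(X)^{**}$- (resp.\ $\Bdd(Y)^{**}$-) variable. Concretely, writing $\Bdd(X,Y^{**})=(X\tensProj Y^*)^*$ and pairing against a simple tensor $x\otimes\eta$, one finds $\langle fS,x\otimes\eta\rangle=\langle S,\lambda\rangle$ for a fixed $\lambda\in\Bdd(X)^*$, which is manifestly \weakStar{} continuous in $S$. On the right-hand sides, $\alpha_X$ and $\alpha_Y$ are \weakStar{} continuous as transposes (\autoref{dfn:alpha:alphaXY}), so the issue is the \weakStar{} continuity, in the $\Bdd(X,X^{**})$- resp.\ $\Bdd(Y,Y^{**})$-variable, of the reduced actions of \autoref{pgr:alpha:bimod-BXYdd-reduced}. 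For the first identity this is straightforward: using \autoref{prp:alpha:psi} one computes $\langle fa,x\otimes\eta\rangle=\langle a,x\otimes\psi_{X,Y}(f)(\eta)\rangle$, which is the evaluation of $a$ at the fixed tensor $x\otimes\psi_{X,Y}(f)(\eta)\in X\tensProj X^*$, hence \weakStar{} continuous in $a$.

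The main obstacle is the second identity, because the left action of $\Bdd(Y,Y^{**})$ on $\Bdd(X,Y^{**})$ is \emph{not} \weakStar{} continuous in the algebra variable for an arbitrary module element (this is reflected in the asymmetry recorded in \autoref{rmk:alpha:BAlgBXXdd}). What rescues the argument is the hypothesis $f=\gamma_{X,Y}(g)=\kappa_Y\circ g$: since the range of $\gamma_{X,Y}(g)$ lies in $\kappa_Y(Y)$, the analogous computation (again using \autoref{prp:alpha:psi}) yields $\langle b\,\gamma_{X,Y}(g),x\otimes\eta\rangle=\langle\kappa_Y(g(x)),\psi_Y(b)(\eta)\rangle=\langle b,g(x)\otimes\eta\rangle$, which collapses to the evaluation of $b$ at the fixed tensor $g(x)\otimes\eta\in Y\tensProj Y^*$ and is therefore \weakStar{} continuous in $b$. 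This is exactly the point where restricting the module element to the image of $\gamma_{X,Y}$ compensates for the general failure of \weakStar{} continuity of the left action. With both sides \weakStar{} continuous and agreeing on a \weakStar{} dense set, the two identities follow.
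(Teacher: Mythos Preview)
Your proof is correct and follows essentially the same approach as the paper's: verify the identities on the \weakStar{} dense image of $\kappa$ using \autoref{pgr:alpha:bimod-BXYdd-full} together with $\alpha\circ\kappa=\gamma$, and then extend by \weakStar{} continuity of both sides in the bidual variable. Your treatment is in fact a bit more explicit than the paper's, spelling out via the pairing with simple tensors why $h\mapsto h\gamma_{X,Y}(g)$ is \weakStar{} continuous (the paper just says ``as above''); the one minor quibble is that \autoref{rmk:alpha:BAlgBXXdd} records the asymmetry for continuity in the \emph{module} variable rather than the algebra variable, but your direct computation $\langle b\,\gamma_{X,Y}(g),x\otimes\eta\rangle=\langle b,g(x)\otimes\eta\rangle$ stands on its own and is exactly the right justification.
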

\begin{proof}
Choose a net $(a_i)_{i\in I}$ in $\Bdd(X)$ such that $a^{**}=\wkStar\lim_{i\in I}\kappa_{\Bdd(X)}(a_i)$.
For each $i\in I$, using that $\alpha_X\circ\kappa_{\Bdd(X)}=\gamma_X$, and using \autoref{pgr:alpha:bimod-BXYdd-full}, we get
\[
f \alpha_X(\kappa_{\Bdd(X)}(a_i))
= f \gamma_{X}(a_i)
= f\kappa_{\Bdd(X)}(a_i).
\]

Let $n\colon \Bdd(X)\times(X\tensProj Y^*)\to (X\tensProj Y^*)$ be the bilinear map implementing the left action of $\Bdd(X)$ on $X\tensProj Y^*$.
Identifying the dual of $X\tensProj Y^*$ with $\Bdd(X,Y^{**})$, we have $fa^{**}=\rtArensII{n}(f,a^{**})$;
see \autoref{dfn:actionsDualMod}.
A similar argument as in \autoref{rmk:alpha:BAlgBXXdd} shows that the map $\Bdd(X,X^{**})\to\Bdd(X,Y^{**})$ given by $h\mapsto fh$ is \weakStar{} continuous.
Using this and using that $\alpha_X$ is \weakStar{} continuous at the last step, and using at the first step that $\rtArensII{n}$ is \weakStar{} continuous in 
the second variable, we get
\[
f\alpha_X(a^{**})
= \wkStar\lim_{i\in I} f \alpha_X(\kappa_{\Bdd(X)}(a_i))
= \wkStar\lim_{i\in I} f\kappa_{\Bdd(X)}(a_i)
= fa^{**}.
\]

Write $\gamma$ for $\gamma_{X,Y}$,
Choose a net $(b_j)_{j\in J}$ in $\Bdd(Y)$ with $b^{**}=\wkStar\lim_{j\in J}\kappa_{\Bdd(Y)}(b_j)$.
Given $j\in J$, we deduce as above that
\[
\alpha_Y(\kappa_{\Bdd(Y)}(b_j)) \gamma(g)
= \gamma_{Y}(b_j)\gamma(g)
= \kappa_{\Bdd(Y)}(b_j)\gamma(g).
\]

Let $m\colon (X\tensProj Y^*)\times\Bdd(Y)\to X\tensProj Y^*$ be the bilinear map implementing the right action of $\Bdd(Y)$ on $X\tensProj Y^*$.
Then $b^{**}\gamma(g)=\ltArensII{m}(b^{**},\gamma(g))$.
As above, we get that the map $\Bdd(Y,Y^{**})\to\Bdd(X,Y^{**})$ given by $h\mapsto h\gamma(g)$ is \weakStar{} continuous.
Thus
\[
\alpha_Y(b^{**})\gamma(g)
= \wkStar\lim_{i\in I} \alpha_Y(\kappa_{\Bdd(Y)}(b))\gamma(g)
= \wkStar\lim_{j\in J} \kappa_{\Bdd(Y)}(b_j)\gamma(g)
= b^{**}\gamma_{X,Y}(g). \qedhere
\]
\end{proof}

\begin{lma}
\label{prp:alpha:alpha-bimod-full}
The map $\omega_{X,Y}\colon X\tensProj Y^* \to\Bdd(X,Y)^*$ from \autoref{pgr:alpha:omegaXY} is a $\Bdd(X)$-$\Bdd(Y)$-bimodule map.
Further, the map $\alpha_{X,Y}\colon\Bdd(X,Y)^{**}\to\Bdd(X,Y^{**})$ is a left $(\Bdd(Y)^{**},\ltArensProd)$-module map and also a right
$(\Bdd(X)^{**},\rtArensProd)$-module map.
\end{lma}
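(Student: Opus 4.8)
The plan is to establish the bimodule-map property of $\omega_{X,Y}$ by a direct computation on simple tensors, and then to derive the bidual statements for $\alpha_{X,Y}=\omega_{X,Y}^*$ by dualizing and passing to weak* limits, exactly as in \autoref{prp:alpha:bimod-BXYdd-relation}. First I would record that $\Bdd(X,Y)^*$ carries the dual of the $\Bdd(Y)$-$\Bdd(X)$-bimodule structure on $\Bdd(X,Y)$ from \autoref{pgr:alpha:bimod-BXY}, so that $a\in\Bdd(X)$ acts on the left by $(a\phi)(f)=\phi(fa)$ and $b\in\Bdd(Y)$ acts on the right by $(\phi b)(f)=\phi(bf)$; thus $\Bdd(X,Y)^*$ is a left $\Bdd(X)$- and right $\Bdd(Y)$-module, matching the structure of $X\tensProj Y^*$ from \autoref{pgr:alpha:bimod-BXYdd-full}. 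It then suffices to check the module identities on a simple tensor $x\otimes\eta$ and extend by linearity and density: for $a\in\Bdd(X)$, the functionals $\omega_{X,Y}(a(x\otimes\eta))=\omega_{a(x),\eta}$ and $a\,\omega_{x,\eta}$ both send $f$ to $\langle f(a(x)),\eta\rangle$, and for $b\in\Bdd(Y)$ the functionals $\omega_{X,Y}((x\otimes\eta)b)=\omega_{x,b^*\eta}$ and $\omega_{x,\eta}b$ both send $f$ to $\langle(b\circ f)(x),\eta\rangle$.

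For the second assertion I would use that $\alpha_{X,Y}=\omega_{X,Y}^*$ is, being a transpose, automatically weak*-continuous, and that the bidual module structures in play are those produced by the dual-module construction of \autoref{prp:dualMod:extDualMod}. Dualizing the left-module identity $\omega_{X,Y}(a\cdot t)=a\cdot\omega_{X,Y}(t)$ shows that $\alpha_{X,Y}$ intertwines the right $\Bdd(X)$-actions at the level of genuine elements, that is, $\alpha_{X,Y}(F\rtArensProd\kappa_{\Bdd(X)}(a))=\alpha_{X,Y}(F)\rtArensProd\kappa_{\Bdd(X)}(a)$ for all $F\in\Bdd(X,Y)^{**}$ and $a\in\Bdd(X)$; dualizing the right-module identity gives the corresponding statement for the left action of $\kappa_{\Bdd(Y)}(b)$. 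I would then promote these to the biduals verbatim as in \autoref{prp:alpha:bimod-BXYdd-relation}: writing $S=\wkStar\lim_i\kappa_{\Bdd(X)}(a_i)$ with $a_i\in\Bdd(X)$, and using that $\rtArensProd$ is weak*-continuous in its second variable (\autoref{prp:dualMod:ArensTranspWkCts}), that $\alpha_{X,Y}$ is weak*-continuous, and that the right $(\Bdd(X)^{**},\rtArensProd)$-action on $\Bdd(X,Y^{**})$ is weak*-continuous in the $\Bdd(X)^{**}$-variable, one passes to the limit to obtain $\alpha_{X,Y}(F\rtArensProd S)=\alpha_{X,Y}(F)\rtArensProd S$ for all $S\in\Bdd(X)^{**}$. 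The symmetric argument, with $\ltArensProd$ weak*-continuous in its first variable and a net in $\Bdd(Y)$, yields the left $(\Bdd(Y)^{**},\ltArensProd)$-module identity.

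The simple-tensor computation is routine; the step demanding care is matching the two incarnations of the bidual module structures. Concretely, I must check that the right $(\Bdd(X)^{**},\rtArensProd)$-action of $\kappa_{\Bdd(X)}(a)$ on $\Bdd(X,Y)^{**}$ (obtained by dualizing the right $\Bdd(X)$-module $\Bdd(X,Y)$ twice) is the double transpose of right multiplication by $a$, while the right action on $\Bdd(X,Y^{**})=(X\tensProj Y^*)^*$ (obtained by dualizing the left $\Bdd(X)$-module $X\tensProj Y^*$ once) is the transpose of the left action by $a$; only then does the dualized module identity read as the desired intertwining, and only with the correct Arens products ($\ltArensProd$ on the $\Bdd(Y)^{**}$-side, $\rtArensProd$ on the $\Bdd(X)^{**}$-side) do the weak* approximation arguments close up.
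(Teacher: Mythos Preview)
Your proposal is correct. The first part (the simple-tensor verification that $\omega_{X,Y}$ is a bimodule map) is exactly what the paper means by ``it is easy to see,'' and your careful matching of the two dual module structures is accurate.

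For the second assertion, however, the paper does not redo a weak*-limit argument in situ: it simply invokes \autoref{prp:dualMod:extDualModFunctorial}, which already records, once and for all, that the transpose of a left $A$-module map is a right $(A^{**},\rtArensProd)$-module map (and analogously on the other side). The proof of that proposition is precisely the weak*-approximation argument you sketch. So your approach and the paper's are the same at the level of ideas; the difference is purely organizational: the paper abstracts the net argument into a general functoriality statement in the appendix and cites it, whereas you unpack it in place. Your version has the advantage of being self-contained and of making explicit the compatibility check in your last paragraph; the paper's version is shorter and reusable.
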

\begin{proof}
It is easy to see that $\omega_{X,Y}$ is a $\Bdd(X)$-$\Bdd(Y)$-bimodule map.
The second assertion follows from \autoref{prp:dualMod:extDualModFunctorial}, since $\alpha_{X,Y}=\omega_{X,Y}^*$.
\end{proof}

\begin{thm}
\label{prp:alpha:alpha-bimod-reduced}
Let $X$ and $Y$ be Banach spaces, let $a^{**}\in\Bdd(X)^{**}$, and let $f^{**}\in\Bdd(X,Y)^{**}$.
Then
\[
\alpha_{X,Y}(f^{**}\rtArensProd a^{**}) = \alpha_{X,Y}(f^{**})\alpha_X(a^{**}).
\]

If $Y$ is reflexive, then also $\alpha_{X,Y}(b^{**}\ltArensProd f^{**})=\alpha_Y(b^{**})\alpha_{X,Y}(f^{**})$ for $b^{**}\in\Bdd(Y)^{**}$.
\end{thm}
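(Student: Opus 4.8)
The plan is to obtain both identities by combining the two immediately preceding results, \autoref{prp:alpha:alpha-bimod-full} and \autoref{prp:alpha:bimod-BXYdd-relation}, which between them already isolate all the content. The theorem records how $\alpha_{X,Y}$ interacts with the \emph{reduced} module structure on $\Bdd(X,Y^{**})$ from \autoref{pgr:alpha:bimod-BXYdd-reduced} (the action of $\Bdd(X,X^{**})$ via $\alpha_X$), whereas \autoref{prp:alpha:alpha-bimod-full} established compatibility of $\alpha_{X,Y}$ with the \emph{full} module structure from \autoref{pgr:alpha:bimod-BXYdd-full} (the action of $\Bdd(X)^{**}$). The role of \autoref{prp:alpha:bimod-BXYdd-relation} is precisely to bridge these two structures, so the proof should be a short chain of substitutions rather than a fresh computation.

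For the first identity I would first apply \autoref{prp:alpha:alpha-bimod-full}, which says that $\alpha_{X,Y}$ is a right $(\Bdd(X)^{**},\rtArensProd)$-module map, to obtain $\alpha_{X,Y}(F\rtArensProd S)=\alpha_{X,Y}(F)\,S$, where the right-hand side denotes the full right $\Bdd(X)^{**}$-action on the element $\alpha_{X,Y}(F)\in\Bdd(X,Y^{**})$. I would then set $f=\alpha_{X,Y}(F)$ in the first relation of \autoref{prp:alpha:bimod-BXYdd-relation}, namely $fS=f\alpha_X(S)$, which converts the full action into the reduced product with $\alpha_X(S)\in\Bdd(X,X^{**})$. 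Chaining the two equalities gives $\alpha_{X,Y}(F\rtArensProd S)=\alpha_{X,Y}(F)\alpha_X(S)$. Note that this argument places no hypothesis on $Y$, since the first relation of \autoref{prp:alpha:bimod-BXYdd-relation} is valid for every $f\in\Bdd(X,Y^{**})$.

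For the second identity the same strategy applies on the left: by \autoref{prp:alpha:alpha-bimod-full} the map $\alpha_{X,Y}$ is a left $(\Bdd(Y)^{**},\ltArensProd)$-module map, so $\alpha_{X,Y}(T\ltArensProd F)=T\,\alpha_{X,Y}(F)$ for the full left $\Bdd(Y)^{**}$-action, and one wants to rewrite this as $\alpha_Y(T)\alpha_{X,Y}(F)$ using the second relation of \autoref{prp:alpha:bimod-BXYdd-relation}. The hard part---and the point where reflexivity of $Y$ becomes essential---is that the relation $T\gamma_{X,Y}(g)=\alpha_Y(T)\gamma_{X,Y}(g)$ is only asserted for elements in the image of $\gamma_{X,Y}$, i.e.\ operators of the form $\kappa_Y\circ g$, which in general form a proper subspace of $\Bdd(X,Y^{**})$. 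I would resolve this by observing that when $Y$ is reflexive the embedding $\kappa_Y$ is an isomorphism, hence $\gamma_{X,Y}$ is surjective and every element of $\Bdd(X,Y^{**})$---in particular $\alpha_{X,Y}(F)$---is of the form $\gamma_{X,Y}(g)$. Applying the relation to such a $g$ then yields $T\,\alpha_{X,Y}(F)=\alpha_Y(T)\alpha_{X,Y}(F)$, and combining with the left-module property completes the identity. The only delicate step is verifying that surjectivity of $\gamma_{X,Y}$ is exactly what makes the left-hand relation globally applicable; everything else is formal substitution.
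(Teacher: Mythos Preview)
Your proposal is correct and follows essentially the same approach as the paper's own proof: apply \autoref{prp:alpha:alpha-bimod-full} to pass from the Arens module action to the full $\Bdd(X)^{**}$-action on $\Bdd(X,Y^{**})$, then invoke \autoref{prp:alpha:bimod-BXYdd-relation} to convert to the reduced action via $\alpha_X$ (respectively $\alpha_Y$), using surjectivity of $\gamma_{X,Y}$ in the reflexive case to bring $\alpha_{X,Y}(F)$ into the scope of the second relation.
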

\begin{proof}
Use \autoref{prp:alpha:alpha-bimod-full} at the first step, and \autoref{prp:alpha:bimod-BXYdd-relation} at the second, to get
\[
\alpha_{X,Y}(f^{**}\rtArensProd a^{**}) = \alpha_{X,Y}(f^{**})a^{**} = \alpha_{X,Y}(f^{**})\alpha_X(a^{**}).
\]

Next, assume that $Y$ is reflexive, and let $b^{**}\in\Bdd(Y)^{**}$.
Since $Y$ is reflexive, the map $\gamma_{X,Y}$ is surjective and thus $\alpha_{X,Y}(f^{**})$ lies in its image.
Hence, \autoref{prp:alpha:bimod-BXYdd-relation} shows that $b^{**}\alpha_{X,Y}(f^{**}) = \alpha_Y(b^{**})\alpha_{X,Y}(f^{**})$.
Using this at the second step, and \autoref{prp:alpha:alpha-bimod-full} at the first step, we conclude that
\[
\alpha_{X,Y}(b^{**}\ltArensProd f^{**}) = b^{**}\alpha_{X,Y}(f^{**}) = \alpha_Y(b^{**})\alpha_{X,Y}(f^{**}).\qedhere
\]
\end{proof}


As an application, we prove a characterization of reflexivity in terms of the map $\alpha_X$. 
We thank Matthew Daws for providing us a proof of the converse implication.

\begin{prp}
\label{prp:alpha:alphaBXX-firstArensMultiplicative}
Let $X$ be a Banach space.
Then $X$ is reflexive if and only if $\alpha_X\colon(\Bdd(X)^{**},\ltArensProd)\to\Bdd(X,X^{**})$ is multiplicative.
\end{prp}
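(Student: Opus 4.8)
The plan is to prove both implications; the forward one is immediate. If $X$ is reflexive, then the second assertion of \autoref{prp:alpha:alphaBXX-multiplicative} states precisely that $\alpha_X(S\ltArensProd T)=\alpha_X(S)\alpha_X(T)$ for all $S,T\in\Bdd(X)^{**}$, so $\alpha_X$ is $\ltArensProd$-multiplicative. The content is the converse, to which I now turn.

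Assume that $\alpha_X$ is $\ltArensProd$-multiplicative; I want to deduce reflexivity of $X$. The first observation is that the two Arens products agree when the left factor is elementary: for $a\in\Bdd(X)$ and $T\in\Bdd(X)^{**}$ one has $\kappa_{\Bdd(X)}(a)\ltArensProd T=\kappa_{\Bdd(X)}(a)\rtArensProd T$, since in the iterated-limit description of the two products one of the limits is then over a constant net. Combining this with the first (unconditional) assertion of \autoref{prp:alpha:alphaBXX-multiplicative} and with $\alpha_X\circ\kappa_{\Bdd(X)}=\gamma_X$ (\autoref{prp:alpha:alpha-kappa-gamma}), I obtain the identity $\alpha_X(\kappa_{\Bdd(X)}(a)\ltArensProd T)=\gamma_X(a)\,\alpha_X(T)$, valid for all $a$ and $T$ without any hypothesis. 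Since the first Arens product is \weakStar{} continuous in its first variable (a consequence of the continuity properties of the Arens products, \autoref{prp:dualMod:ArensTranspWkCts}), and since both $\alpha_X$ and the evaluations $\ev_x\colon\Bdd(X,X^{**})\to X^{**}$ are \weakStar{} continuous (\autoref{lma:evThetawkstercts}), choosing a net $\kappa_{\Bdd(X)}(a_i)\to S$ lets me compute $\alpha_X(S\ltArensProd T)$ as the corresponding \weakStar{} limit.

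To detect reflexivity I test against the rank-one elements $T=\Theta_\xi^{**}(\psi)$, for $\xi\in X^*\setminus\{0\}$ and $\psi\in X^{**}$. Using \autoref{pgr:alpha:charAlpha} one checks that $\alpha_X(T)=\theta_{\psi,\xi}$, the operator $x\mapsto\langle\xi,x\rangle\psi$ in $\Bdd(X,X^{**})$. The product formula of \autoref{pgr:alpha:BAlgBXXdd} then gives
\[
\big(\alpha_X(S)\alpha_X(T)\big)(x)=\langle\xi,x\rangle\,\kappa_{X^*}^*\big(\alpha_X(S)^{**}(\psi)\big),
\]
while the net computation above yields
\[
\alpha_X(S\ltArensProd T)(x)=\langle\xi,x\rangle\,\kappa_{X^*}^*\big(E_\psi^{**}(S)\big),
\qquad E_\psi(a)=a^{**}(\psi)\in X^{**}.
\]
Evaluating at an $x$ with $\langle\xi,x\rangle=1$, the multiplicativity hypothesis forces $\kappa_{X^*}^*(E_\psi^{**}(S))=\kappa_{X^*}^*(\alpha_X(S)^{**}(\psi))$ for every $S\in\Bdd(X)^{**}$ and $\psi\in X^{**}$; here one uses $\kappa_{X^*}^*\circ\kappa_X^{**}=\id_{X^{**}}$ both to check the elementary case and to simplify the expressions.

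Finally I fix $\eta\in X^*\setminus\{0\}$ and apply $\eta$ to both sides of this last identity. With $V_\eta\colon\Bdd(X)\to X^*$, $V_\eta(a)=a^*\eta$, the left-hand side becomes $\langle V_\eta^{**}(S),\psi\rangle$ and the right-hand side becomes $\langle\kappa_{X^*}(\zeta_S),\psi\rangle$, where $\zeta_S=\alpha_X(S)^*(\kappa_{X^*}\eta)\in X^*$. As this holds for all $\psi\in X^{**}$, I conclude $V_\eta^{**}(S)=\kappa_{X^*}(\zeta_S)\in\kappa_{X^*}(X^*)$ for every $S$. But $V_\eta$ is surjective: given $\zeta\in X^*$ and $x_0$ with $\langle\eta,x_0\rangle=1$, \autoref{pgr:alpha:theta} gives $V_\eta(\theta_{x_0,\zeta})=\zeta$. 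Hence $V_\eta^{**}$ maps onto $X^{***}$, and therefore $\kappa_{X^*}(X^*)=X^{***}$, so $X^*$ and thus $X$ is reflexive. The main obstacle is the careful bookkeeping in the two evaluations of $\alpha_X(S\ltArensProd T)$: one must pin down the first Arens product as the iterated limit opposite to the one realizing the already-established $\rtArensProd$-multiplicativity, and track the canonical embeddings so that $\kappa_{X^*}^*\circ\kappa_X^{**}=\id$ is invoked correctly. Conceptually, the argument shows that $\ltArensProd$-multiplicativity would make right multiplication on $\Bdd(X,X^{**})$ \weakStar{} continuous, which by \autoref{prp:alpha:BXd-ltrtdual} and \autoref{rmk:alpha:BAlgBXXdd} occurs only when $X$ is reflexive.
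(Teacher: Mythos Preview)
Your argument is correct and takes a genuinely different route from the paper's.

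The paper proves the converse by contraposition: assuming $X$ is not reflexive, it invokes an external characterization of non-reflexivity (the existence of bounded sequences $(x_n)\subseteq X$ and $(\xi_m)\subseteq X^*$ with $\langle\xi_n,x_m\rangle=1$ for $n\leq m$ and $0$ otherwise, from \cite{Gue92ClassicalSequBSp} or \cite{Daw04ArensRegOpOnBSp}), builds rank-one operators $a_n=\theta_{x_0,\xi_n}$ and $b_m=\theta_{x_m,\xi_0}$, passes to \weakStar{}-convergent subnets with limits $S,T\in\Bdd(X)^{**}$, and then computes $\langle\alpha_X(S\ltArensProd T),x_0\otimes\xi_0\rangle=1$ while $\langle\alpha_X(S)\alpha_X(T),x_0\otimes\xi_0\rangle=0$. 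Your argument is instead direct and entirely self-contained: from $\ltArensProd$-multiplicativity you extract, for every $\eta\in X^*$ and every $S\in\Bdd(X)^{**}$, the inclusion $V_\eta^{**}(S)\in\kappa_{X^*}(X^*)$, and then use surjectivity of $V_\eta$ (hence of $V_\eta^{**}$) to force $X^{***}=\kappa_{X^*}(X^*)$. The paper's approach has the advantage of exhibiting explicit witnesses to the failure of multiplicativity; yours avoids the external biorthogonal-system lemma and makes transparent the mechanism---$\ltArensProd$-multiplicativity would make right multiplication on $\Bdd(X,X^{**})$ \weakStar{} continuous in a sense that, by \autoref{rmk:alpha:BAlgBXXdd}, can only happen in the reflexive case. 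Two minor citation remarks: the identity $\alpha_X(\Theta_\xi^{**}(\psi))=\theta_{\psi,\xi}$ is recorded as \autoref{prp:compl:alpha_of_omega} (you derive it from \autoref{pgr:alpha:charAlpha}, which is the same computation), and the agreement of the two Arens products when one factor lies in $\kappa_{\Bdd(X)}(\Bdd(X))$ is \autoref{prp:dualMod:ArensTranspKappa}.
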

\begin{proof}
The `only if' implication follows from \autoref{prp:alpha:alpha-bimod-reduced}.
To prove the backward implication, assume that $X$ is not reflexive.
Applying \cite[Proposition~7]{Daw04ArensRegOpOnBSp}, choose bounded sequences $(x_n)_{n\in\NN}$ in $X$ and $(x^*_m)_{m\in\NN}$ in $X^*$ with
\[
\langle x^*_n, x_m \rangle
= \begin{cases}
1, &n\leq m \\
0, &n>m
\end{cases}.
\]
For $n,m\in\NN$, set $a_n = \theta_{x_0,x^*_n}$ and $b_m = \theta_{x_m,x^*_0}$.
Then $(a_n)_{n\in\NN}$ and $(b_m)_{m\in\NN}$ are bounded in $\Bdd(X)$.
Choose subnets $(a_{n_i})_{i\in I}$ and $(b_{m_j})_{j\in J}$ whose images in $\Bdd(X)^{**}$ converge \weakStar{}, and let $a^{**}$ and $b^{**}$ be their limits.
We have
\[
a^{**}\ltArensProd b^{**}
= \wkStar\lim_{i\in I}\kappa_{\Bdd(X)}(a_{n_i})b^{**}
= \wkStar\lim_{i\in I}\wkStar\lim_{j\in J} \kappa_{\Bdd(X)}(a_{n_i}b_{m_j}).
\]
Therefore
\begin{align*}
\langle \alpha_X(a^{**}\ltArensProd b^{**}),x_0\otimes x^*_0 \rangle
&= \langle a^{**}\ltArensProd b^{**}, \omega_X(x_0\otimes x^*_0) \rangle \\
&= \lim_{i\in I}\lim_{j\in J} \langle \kappa_{\Bdd(X)}(a_{n_i}b_{m_j}), \omega_X(x_0\otimes x^*_0) \rangle \\
&= \lim_{i\in I}\lim_{j\in J} \langle a_{n_i}b_{m_j}x_0, x^*_0 \rangle
= \lim_{i\in I}\lim_{j\in J} \langle x^*_{n_i}, x_{m_j} \rangle
= 1.
\end{align*}

Using that $\alpha_X\circ\kappa_{\Bdd(X)}=\gamma_X$ (\autoref{pgr:alpha:charAlpha}), and using that $\Bdd(X,X^{**})$ is a left dual Banach algebra and that right multiplication by an element in the image of $\gamma_X$ is \weakStar{} continuous (\autoref{rmk:alpha:BAlgBXXdd}), we deduce that
\[
\alpha_X(a^{**})\alpha_X(b^{**})
= \wkStar\lim_{j\in J} \alpha_X(a^{**})\gamma_X(b_{m_j})
= \wkStar\lim_{j\in J} \wkStar\lim_{i\in I} \gamma_X(a_{n_i}b_{m_j}).
\]
Hence,
\begin{align*}
\langle \alpha_X(a^{**})\alpha_X(b^{**}),x_0\otimes x^*_0 \rangle
&= \lim_{j\in J}\lim_{i\in I} \langle \gamma_X(a_{n_i}b_{m_j}), x_0\otimes x^*_0 \rangle \\
&= \lim_{j\in J}\lim_{i\in I} \langle a_{n_i}b_{m_j}x_0, x^*_0 \rangle
= \lim_{j\in J}\lim_{i\in I} \langle x^*_{n_i}, x_{m_j} \rangle
= 0,
\end{align*}
which shows that $\alpha_X(a^{**}\ltArensProd b^{**})\neq\alpha_X(a^{**})\alpha_X(b^{**})$, as desired.
\end{proof}

By \autoref{prp:alpha:alphaBXX-firstArensMultiplicative}, when $X$ is reflexive then $\alpha_X\colon \Bdd(X)^{**}\to\Bdd(X)$
is multiplicative for both Arens products on $\Bdd(X)^{**}$.
In \autoref{prp:refl:charXRefl_BX}, we prove a converse to this statement:
If there exists a map $r\colon\Bdd(X)^{**}\to\Bdd(X)$ that is multiplicative for either Arens product and satisfies $r\circ\kappa_{\Bdd(X)}=\id_{\Bdd(X)}$, then $X$ is reflexive.

\section{Complementation of \texorpdfstring{$\Bdd(X,Y)$}{L(X,Y))} in its bidual}
\label{sec:compl}

The main result of this section is \autoref{prp:compl:complBidual}, where we show that there is a natural one-to-one correspondence between projections from $Y^{**}$ onto $Y$ and projections from $\Bdd(X,Y)^{**}$ onto $\Bdd(X,Y)$ that are right $\Approx(X)$-module maps.
It follows that $Y$ is $\lambda$-complemented in its bidual\footnote{That is, there is a projection $\pi\colon Y^{**}\to Y$ with $\|\pi\|\leq\lambda$.} if and only if $\Bdd(X,Y)$ is $\lambda$-complemented in its bidual (as a Banach space or, equivalently, as a right $\Bdd(X)$-module);
see \autoref{prp:compl:charCompl}.

Throughout this section, $X$ and $Y$ denote Banach spaces.

\begin{dfn}
\label{pgr:compl:q_r_from_pi}
Let $\pi\colon Y^{**}\to Y$ be a bounded linear map.
We define the map $q_\pi\colon\Bdd(X,Y^{**})\to\Bdd(X,Y)$ by $q_\pi(f) = \pi\circ f$, for $f\in\Bdd(X,Y^{**})$, and we define $r_\pi\colon\Bdd(X,Y)^{**}\to\Bdd(X,Y)$ by $r_\pi = q_\pi\circ \alpha_{X,Y}$.
\end{dfn}

\begin{lma}
\label{prp:compl:q_r_from_pi}
Let $\pi\colon Y^{**}\to Y$ be a projection.
Then:
\begin{enumerate}
\item
$q_\pi$ is a right $\Bdd(X)$-module projection with $\|q_\pi\|\leq\|\pi\|$.
\item
$r_\pi$ is a right $\Bdd(X)$-module projection with $\|r_\pi\|\leq\|\pi\|$.
\end{enumerate}
\end{lma}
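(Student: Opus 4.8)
The plan is to verify the four required properties—module compatibility and the projection identity, for each of $q_\pi$ and $r_\pi$—by direct computation, invoking \autoref{prp:alpha:alpha-bimod-reduced} for the single step that carries genuine content. Throughout I use that a projection $\pi$ onto $Y$ restricts to the identity on the canonical copy of $Y$, that is, $\pi\circ\kappa_Y=\id_Y$.

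First I would dispose of part (1). The norm estimate is immediate, since for $f\in\Bdd(X,Y^{**})$ we have $\|q_\pi(f)\|=\|\pi\circ f\|\leq\|\pi\|\,\|f\|$. For the module property, recall from \autoref{pgr:alpha:bimod-BXY} and \autoref{pgr:alpha:bimod-BXYdd-full} that the right action of $a\in\Bdd(X)$ on both $\Bdd(X,Y)$ and $\Bdd(X,Y^{**})$ is precomposition with $a$; hence $q_\pi(f\circ a)=\pi\circ f\circ a=q_\pi(f)\circ a$, so $q_\pi$ is a right $\Bdd(X)$-module map. For the projection identity I use $\gamma_{X,Y}(g)=\kappa_Y\circ g$ to compute $q_\pi(\gamma_{X,Y}(g))=\pi\circ\kappa_Y\circ g=g$, whence $q_\pi\circ\gamma_{X,Y}=\id_{\Bdd(X,Y)}$.

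For part (2), write $r_\pi=q_\pi\circ\alpha_{X,Y}$. The norm bound follows by composing the estimate from part (1) with contractivity of $\alpha_{X,Y}$ (\autoref{rmk:alpha:alphaXY}), giving $\|r_\pi\|\leq\|\pi\|$. The projection identity is immediate from part (1) together with $\alpha_{X,Y}\circ\kappa_{\Bdd(X,Y)}=\gamma_{X,Y}$ (\autoref{prp:alpha:alpha-kappa-gamma}): indeed $r_\pi\circ\kappa_{\Bdd(X,Y)}=q_\pi\circ\gamma_{X,Y}=\id_{\Bdd(X,Y)}$. The substantive step is the module property. Given $a\in\Bdd(X)$ and $F\in\Bdd(X,Y)^{**}$, the right action is $F\rtArensProd\kappa_{\Bdd(X)}(a)$, and \autoref{prp:alpha:alpha-bimod-reduced} yields $\alpha_{X,Y}(F\rtArensProd\kappa_{\Bdd(X)}(a))=\alpha_{X,Y}(F)\alpha_X(\kappa_{\Bdd(X)}(a))$. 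Using $\alpha_X\circ\kappa_{\Bdd(X)}=\gamma_X$ (\autoref{prp:alpha:alpha-kappa-gamma}) and the identification in \autoref{pgr:alpha:bimod-BXYdd-full} of the right action of $\gamma_X(a)$ with precomposition by $a$, this equals $\alpha_{X,Y}(F)\circ a$. Applying $q_\pi$ and invoking its module property from part (1) gives $r_\pi(F\rtArensProd\kappa_{\Bdd(X)}(a))=r_\pi(F)\circ a$, as required.

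The main obstacle is isolated entirely in the module property for $r_\pi$, and it is handled by \autoref{prp:alpha:alpha-bimod-reduced}; everything else is bookkeeping with the explicit formulas for $\gamma_{X,Y}$, $\alpha_{X,Y}$, and the various actions. I anticipate no genuine difficulty, provided one keeps careful track of which module structure—$\Bdd(X)$ versus $(\Bdd(X)^{**},\rtArensProd)$—is acting on each space, since the compatibility $\alpha_X\circ\kappa_{\Bdd(X)}=\gamma_X$ is precisely what converts the Arens-product action on the bidual into honest precomposition downstairs.
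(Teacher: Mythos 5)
Your proof is correct and follows essentially the same route as the paper: part (1) by direct computation, and part (2) by composing $q_\pi$ with $\alpha_{X,Y}$, using $\alpha_{X,Y}\circ\kappa_{\Bdd(X,Y)}=\gamma_{X,Y}$ for the projection identity and contractivity of $\alpha_{X,Y}$ for the norm bound. The only (harmless) difference is that for the module property of $r_\pi$ you invoke \autoref{prp:alpha:alpha-bimod-reduced} and then unwind the actions via $\alpha_X\circ\kappa_{\Bdd(X)}=\gamma_X$, whereas the paper cites \autoref{prp:alpha:alpha-bimod-full} directly to conclude that $\alpha_{X,Y}$ is a right $\Bdd(X)$-module map.
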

\begin{proof}
(1) is straightforward to check.
Let us verify~(2).
Using \autoref{pgr:alpha:charAlpha} at the second step, and using~(1) at the third step, we get
\[
r_\pi\circ\kappa_{\Bdd(X)}
= q_\pi\circ\alpha_{X,Y}\circ\kappa_{\Bdd(X)}
= q_\pi\circ\gamma_{X,Y}
= \id_{\Bdd(X)}.
\]
The estimate for $\|r_\pi\|$ follows easily using that $\alpha_{X,Y}$ is contractive.
Lastly, the map $r_\pi$ is a right $\Bdd(X)$-module map since both $\alpha_{X,Y}$ and $q_\pi$ are right $\Bdd(X)$-module maps, by part~(1) and \autoref{prp:alpha:alpha-bimod-full}.
\end{proof}

Recall that $f^\tr$ denotes the transpose of a map $f$.

\begin{dfn}
\label{pgr:compl:pi_from_r}
Let $r\colon\Bdd(X,Y)^{**}\to\Bdd(X,Y)$ be a bounded linear map, let $x\in X$ and let $x^*\in X^*$.
Let $\Theta_{x^*}\colon Y\to\Bdd(X,Y)$ and $\ev_x\colon\Bdd(X,Y)\to Y$ be as in \autoref{pgr:alpha:theta}.
We define the map $\pi_{r,x,x^*}\colon Y^{**}\to Y$ by $\pi_{r,x,x^*}=\ev_x\circ r\circ\Theta_{x^*}^{\tr\tr}$.
\end{dfn}

\begin{lma}
\label{prp:compl:pi_from_r}
Let $r\colon\Bdd(X,Y)^{**}\to\Bdd(X,Y)$ be a projection.
Let $x\in X$ and $x^*\in X^*$ with $\|x\|=\|x^*\|=\langle x,x^*\rangle=1$.
Then:
\begin{enumerate}
\item
We have $\pi_{r,x,x^*}\circ\kappa_Y=\id_Y$ and $\|\pi_{r,x,x^*}\|\leq\|r\|$.
\item
Let $r_{x,x^*}\colon\Bdd(X,Y)^{**}\to\Bdd(X,Y)$ be the map associated to $\pi_{r,x,x^*}$ as in \autoref{pgr:compl:q_r_from_pi}.
Then $r_{x,x^*}(f^{**})(z) = r( f^{**}\theta_{z,x^*})(x)$ for every $f^{**}\in\Bdd(X,Y)^{**}$ and $z\in X$.
In particular, $r=r_{x,x^*}$ if $r$ is a right $\Approx(X)$-module map.
\item
If $r$ is a right $\Approx(X)$-module map, then $\pi_{r,x,x^*}$ is independent of the choice of $x$ and $x^*$.
\end{enumerate}
\end{lma}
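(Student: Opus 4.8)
The plan is to unwind the definitions of $q_\pi, r_\pi$ from \autoref{pgr:compl:q_r_from_pi} and of $\pi_{r,x,\eta}$ from \autoref{pgr:compl:pi_from_r}, and to reduce every identity to the rank-one calculus of \autoref{pgr:alpha:theta} together with the formula $\alpha_{X,Y}(F)(z)=\ev_z^{**}(F)$ from \autoref{pgr:alpha:charAlpha}. Throughout I will use that a projection $r\colon\Bdd(X,Y)^{**}\to\Bdd(X,Y)$ satisfies $r\circ\kappa_{\Bdd(X,Y)}=\id_{\Bdd(X,Y)}$, and that $\langle\eta,x\rangle=1$.

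For~(1), the norm estimate is immediate from submultiplicativity: by \autoref{pgr:alpha:theta} we have $\|\ev_x\|=\|x\|=1$ and $\|\Theta_\eta^{**}\|=\|\Theta_\eta\|=\|\eta\|=1$, whence $\|\pi_{r,x,\eta}\|\leq\|r\|$. For the identity I would first invoke naturality of the canonical embedding to write $\Theta_\eta^{**}\circ\kappa_Y=\kappa_{\Bdd(X,Y)}\circ\Theta_\eta$; combining this with $r\circ\kappa_{\Bdd(X,Y)}=\id_{\Bdd(X,Y)}$ and with $\ev_x\circ\Theta_\eta=\id_Y$ (valid since $\langle\eta,x\rangle=1$, by \autoref{pgr:alpha:theta}) yields
\[
\pi_{r,x,\eta}\circ\kappa_Y=\ev_x\circ r\circ\kappa_{\Bdd(X,Y)}\circ\Theta_\eta=\ev_x\circ\Theta_\eta=\id_Y.
\]

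The key computation is~(2). I would first observe that $\Theta_\eta\circ\ev_z\colon\Bdd(X,Y)\to\Bdd(X,Y)$ sends $f$ to $\theta_{f(z),\eta}=f\circ\theta_{z,\eta}$; that is, $\Theta_\eta\circ\ev_z$ is precisely right multiplication by the rank-one operator $\theta_{z,\eta}\in\Approx(X)$. Passing to biadjoints and using that the right $\Bdd(X)$-action on $\Bdd(X,Y)^{**}$ restricts, on $\Bdd(X)$, to the biadjoint of right multiplication (\autoref{rmk:dualMod:extDualMod}), I get $\Theta_\eta^{**}\big(\ev_z^{**}(F)\big)=F\theta_{z,\eta}$. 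Then, unwinding $r_{x,\eta}=q_{\pi_{r,x,\eta}}\circ\alpha_{X,Y}$ and using \autoref{pgr:alpha:charAlpha},
\[
r_{x,\eta}(F)(z)=\pi_{r,x,\eta}\big(\ev_z^{**}(F)\big)=\ev_x\Big(r\big(\Theta_\eta^{**}(\ev_z^{**}(F))\big)\Big)=r(F\theta_{z,\eta})(x).
\]
For the final assertion, if $r$ is a right $\Approx(X)$-module map then, since $\theta_{z,\eta}\in\Approx(X)$, we have $r(F\theta_{z,\eta})=r(F)\circ\theta_{z,\eta}$, and evaluating at $x$ gives $r(F)(\langle\eta,x\rangle z)=r(F)(z)$, so $r_{x,\eta}=r$.

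For~(3), the essential point is that the assignment $\pi\mapsto r_\pi$ is injective, which I would prove by exhibiting the recovery formula $\pi_{r_\pi,x,\eta}=\pi$. Indeed, $\alpha_{X,Y}(\Theta_\eta^{**}(\Phi))(z)=(\ev_z\circ\Theta_\eta)^{**}(\Phi)=\langle\eta,z\rangle\Phi$, since $\ev_z\circ\Theta_\eta=\langle\eta,z\rangle\id_Y$; applying $q_\pi$ and then $\ev_x$ (with $\langle\eta,x\rangle=1$) returns $\pi(\Phi)$. Now let $r$ be a right $\Approx(X)$-module projection and let $(x,\eta)$ and $(x',\eta')$ be two admissible pairs. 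By~(2) we have $r=r_{\pi_{r,x,\eta}}$, and applying the recovery formula to this identity with the pair $(x',\eta')$ yields $\pi_{r,x',\eta'}=\pi_{r_{\pi_{r,x,\eta}},x',\eta'}=\pi_{r,x,\eta}$, proving independence. The main obstacle is the identification $\Theta_\eta\circ\ev_z$ with right multiplication by $\theta_{z,\eta}$ and keeping track of how it interacts with biadjoints and the Arens module action; the rest is bookkeeping.
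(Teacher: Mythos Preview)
Your arguments for parts~(1) and~(2) are correct and essentially identical to the paper's proof: both rely on the rank-one identity $\Theta_\eta\circ\ev_z = (\,\cdot\,)\circ\theta_{z,\eta}$, pass to bitransposes to obtain $\Theta_\eta^{**}\circ\ev_z^{**}(F)=F\theta_{z,\eta}$, and then unwind $r_{x,\eta}=q_{\pi_{r,x,\eta}}\circ\alpha_{X,Y}$ via \autoref{pgr:alpha:charAlpha}.

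For part~(3) you take a genuinely different route. The paper proves independence by a direct rank-one computation: from $\Theta_{\eta'}(z)\theta_{x',\eta}=\theta_{z,\eta'}\theta_{x',\eta}=\theta_{z,\eta}=\Theta_\eta(z)$ one gets $\Theta_{\eta'}^{**}(\sigma)\theta_{x',\eta}=\Theta_\eta^{**}(\sigma)$, and then uses the $\Approx(X)$-module property of $r$ together with $\ev_x(a\theta_{x',\eta})=\ev_{x'}(a)$ to conclude $\pi_{r,x,\eta}=\pi_{r,x',\eta'}$ in one line. Your argument is instead indirect: you first establish the recovery formula $\pi_{r_\pi,x',\eta'}=\pi$ (valid for \emph{any} admissible pair, via the computation $\alpha_{X,Y}(\Theta_\eta^{**}(\Phi))(z)=\langle\eta,z\rangle\Phi$), and then combine it with part~(2), which gives $r=r_{\pi_{r,x,\eta}}$, to deduce $\pi_{r,x',\eta'}=\pi_{r,x,\eta}$. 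This is correct, and in effect it anticipates both \autoref{prp:compl:alpha_of_omega} and a piece of the proof of \autoref{prp:compl:complBidual}, which the paper develops separately. The paper's direct approach keeps~(3) self-contained within rank-one calculus, whereas your approach packages the independence as a consequence of the injectivity of $\pi\mapsto r_\pi$; the latter is conceptually cleaner but front-loads material the paper prefers to postpone.
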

\begin{proof}
(1).
We have $\ev_x\circ \Theta_{x^*}=\id_Y$, since $\langle x,x^*\rangle=1$.
Using this at the last step, using that $\Theta_{x^*}^{\tr\tr}\circ\kappa_Y=\kappa_{\Bdd(X,Y)}\circ\Theta_{x^*}$ at the second step, and using that $r\circ\kappa_{\Bdd(X,Y)}=\id_{\Bdd(X,Y)}$ at the third step, we get
\[
\pi_{r,x,x^*}\circ\kappa_Y
= \ev_x\circ r\circ\Theta_{x^*}^{\tr\tr}\circ\kappa_Y
= \ev_x\circ r\circ\kappa_{\Bdd(X,Y)}\circ\Theta_{x^*}
= \ev_x\circ\Theta_{x^*}
= \id_Y.
\]
The estimate for the norm of $\pi_{x,x^*}$ follows easily using $\|\ev_x\|=\|\Theta_{x^*}\|=1$.

(2).
For every $f\in\Bdd(X,Y)$ and $z\in X$, we have
\[
(\Theta_{x^*}\circ\ev_z)(f)
= \Theta_{x^*}(f(z))
= \theta_{f(z),x^*}
= f\circ\theta_{z,x^*}.
\]
It follows that $\left(\Theta_{x^*}^{\tr\tr}\circ\ev_z^{\tr\tr}\right)(f^{**})=f^{**}\theta_{z,x^*}$ for every $f^{**}\in\Bdd(X,Y)^{**}$ and $z\in X$.
Using this at the last step, and using \autoref{pgr:alpha:charAlpha} at the second step, we get
\begin{align*}
r_{x,x^*}(f^{**})(z)
&= \pi_{r,x,x^*}(\alpha_{X,Y}(f^{**})(z))
= (\ev_x\circ r\circ\Theta_{x^*}^{\tr\tr})(\ev_z^{\tr\tr}(f^{**})) \\
&= r(\Theta_{x^*}^{\tr\tr}\circ\ev_z^{\tr\tr}(f^{**}))(x)
= r\left( f^{**}\theta_{z,x^*} \right)(x).
\end{align*}

Assume now that $r$ is a right $\Approx(X)$-module map.
Then
\[
r_{x,x^*}(f^{**})(z)
= r\left( f^{**}\theta_{z,x^*} \right)(x)
= r(f^{**})(\theta_{z,x^*}(x))
= r(f^{**})(z),
\]
for every $f^{**}\in\Bdd(X,Y)^{**}$ and $z\in X$.
We conclude that $r=r_{x,x^*}$, as desired.

(3).
Let $z\in X$ and $z^*\in X^*$ satisfy $\langle z,z^*\rangle=1$.
We have
\[
\Theta_{z^*}(y)\theta_{z,x^*}
= \theta_{y,z^*}\theta_{z,x^*}
= \theta_{y,x^*}
= \Theta_{x^*}(y),
\]
for all $y\in Y$, and hence $\Theta_{z^*}^{\tr\tr}(y^{**})\theta_{z,x^*}=\Theta_{x^*}^{\tr\tr}(y^{**})$ for all $y^{**}\in Y^{**}$.
Using this at the second step, using that $r$ is a right $\Approx(X)$-module map at the third step, and using that $\ev_x(a\theta_{z,x^*})=\ev_{z}(a)$ for all $a\in\Bdd(X)$ at the fourth step, we get
\begin{align*}
\pi_{r,x,x^*}(y^{**})
&= \ev_x \big( r(\Theta_{x^*}^{\tr\tr}(y^{**}) \big)
= \ev_x \big( r(\Theta_{z^*}^{\tr\tr}(y^{**})\theta_{z,x^*}) \big) \\
&= \ev_x \big( r(\Theta_{z^*}^{\tr\tr}(y^{**}))\theta_{x',\eta} \big) 
= \ev_{z}\big( r(\Theta_{z^*}^{\tr\tr}(y^{**})) \big)
= \pi_{r,z,z^*}(y^{**}),
\end{align*}
for every $y^{**}\in Y^{**}$.
It follows that $\pi_{r, x,x^*}=\pi_{r, z,z^*}$, as desired.
\end{proof}

\begin{dfn}
\label{dfn:compl:pi_from_r}
Assume that $X\neq\{0\}$.
Let $r\colon\Bdd(X,Y)^{**}\to\Bdd(X,Y)$ be a right $\Approx(X)$-module projection.
In view of part~(3) of \autoref{prp:compl:pi_from_r}, we define $\pi_r=\ev_x\circ r\circ\Theta_{x^*}^{\tr\tr}$, for any choice of $x\in X$ and $x^*\in X^*$ satisfying $\|x\|=\|x^*\|=\langle x,x^*\rangle=1$.
\end{dfn}

\begin{lma}
\label{prp:compl:alpha_of_omega}
Let $x^*\in X^*$ and $y^{**}\in Y^{**}$.
Then $\alpha_{X,Y} \left( \Theta_{x^*}^{\tr\tr}(y^{**}) \right)
=\theta_{y^{**},x^*}$.
\end{lma}
\begin{proof}
For $z\in X$ and $y\in Y$, we have
\[
(\ev_z\circ\Theta_{x^*})(y)
= \theta_{y,x^*}(z)
= \langle x^*,z\rangle y.
\]
It follows that $(\ev_z^{\tr\tr}\circ\Theta_{x^*}^{\tr\tr})(y^{**})=\langle x^*,z\rangle y^{**}$.
Using this at the second step, and using \autoref{pgr:alpha:charAlpha} at the first step, we conclude that
\[
\alpha_{X,Y}\left( \Theta_{x^*}^{\tr\tr}(y^{**}) \right)(z)
= \ev_z^{\tr\tr}\left( \Theta_{x^*}^{\tr\tr}(y^{**}) \right)
= \langle x^*,z\rangle y^{**}
= \theta_{y^{**},x^*}(z).\qedhere
\]
\end{proof}

The following theorem is the main result of this section and the foundation of most results in Sections~\ref{sec:predualBXY} and~\ref{sec:refl}.
Indeed, it is the essential new ingredient to prove the most interesting applications of this paper: Corollaries~\ref{prp:refl:charXRefl_BX} and~\ref{prp:refl:unique_BX}.

\begin{thm}
\label{prp:compl:complBidual}
Let $X$ and $Y$ be Banach spaces with $X\neq\{0\}$.
Assigning to $\pi\colon Y^{**}\to Y$ the maps $q_\pi$ and $r_\pi$ from \autoref{pgr:compl:q_r_from_pi} implements natural one-to-one correspondences between the following classes:
\begin{enumerate}
\item[(a)]
Projections $\pi\colon Y^{**}\to Y$.
\item[(b)]
Right $\Approx(X)$-module projections $q\colon\Bdd(X,Y^{**})\to\Bdd(X,Y)$.
\item[(c)]
Right $\Approx(X)$-module projections $r\colon\Bdd(X,Y)^{**}\to\Bdd(X,Y)$.
\end{enumerate}
Given a map $q$ as in~(b), the corresponding map in~(c) is $q\circ\alpha_{X,Y}$.
Given a map $r$ as in~(c), the corresponding map in~(a) is $\pi_r$ from \autoref{dfn:compl:pi_from_r}.
Moreover:
\begin{enumerate}
\item
For any map $\pi$ as in~(a), we have $\|\pi\| = \|q_\pi\| = \|r_\pi\|$.
\item
Every map $q$ as in~(b), and every map $r$ as in~(c), is automatically a right $\Bdd(X)$-module map.
\item
The kernel of $\pi$ is \weakStar{} closed if and only if the kernel of $q_\pi$ is \weakStar{} closed, if and only if the kernel of $r_\pi$ is \weakStar{} closed.
\end{enumerate}
\end{thm}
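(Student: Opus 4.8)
The plan is to exhibit mutually inverse bijections and then read off the three ``moreover'' statements. The cited lemmas already supply most of the well-definedness: by \autoref{prp:compl:q_r_from_pi} the assignment $\pi\mapsto q_\pi$ lands in (b) and $\pi\mapsto r_\pi=q_\pi\circ\alpha_{X,Y}$ lands in (c), while \autoref{prp:compl:pi_from_r} together with \autoref{dfn:compl:pi_from_r} shows that $r\mapsto\pi_r$ sends (c) into (a). So the backbone is to verify that $\pi\mapsto r_\pi$ and $r\mapsto\pi_r$ are mutually inverse, and that $\pi\mapsto q_\pi$ is a bijection onto (b) compatible with $q\mapsto q\circ\alpha_{X,Y}$.

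For the (a)$\leftrightarrow$(c) bijection, one composite is free: $r_{\pi_r}=r$ is exactly the conclusion $r=r_{x,\eta}$ of \autoref{prp:compl:pi_from_r}(2), since $r_{x,\eta}$ is by definition the map associated to $\pi_{r,x,\eta}=\pi_r$. For the other composite $\pi_{r_\pi}=\pi$ I would compute directly: fixing $x,\eta$ with $\langle\eta,x\rangle=1$, apply \autoref{prp:compl:alpha_of_omega} to get $\alpha_{X,Y}(\Theta_\eta^{**}(\sigma))=\theta_{\sigma,\eta}$, so that $r_\pi(\Theta_\eta^{**}(\sigma))=q_\pi(\theta_{\sigma,\eta})=\pi\circ\theta_{\sigma,\eta}=\theta_{\pi(\sigma),\eta}$, and evaluating at $x$ gives $\pi_{r_\pi}(\sigma)=\langle\eta,x\rangle\pi(\sigma)=\pi(\sigma)$. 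This settles (a)$\leftrightarrow$(c).

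For class (b) the obstacle is that $\alpha_{X,Y}$ need not be surjective or even have dense image (\autoref{rmk:alpha:alphaXY}), so I cannot recover $q$ from $q\circ\alpha_{X,Y}$ by density. Instead I would prove directly, paralleling \autoref{prp:compl:pi_from_r}, that every right $\Approx(X)$-module projection $q$ equals $q_\pi$ for a canonical $\pi$. The key identity is $f\circ\theta_{z,\eta}=\theta_{f(z),\eta}$ for $f\in\Bdd(X,Y^{**})$, together with $\kappa_Y\circ\theta_{y,\eta}=\theta_{\kappa_Y(y),\eta}$. Setting $\hat\pi_q(\sigma)=\ev_x\big(q(\theta_{\sigma,\eta})\big)$, the module property of $q$ gives $q(f)(z)=\ev_x\big(q(f\circ\theta_{z,\eta})\big)=\ev_x\big(q(\theta_{f(z),\eta})\big)=\hat\pi_q(f(z))$, i.e.\ $q=q_{\hat\pi_q}$; independence of $\hat\pi_q$ from $(x,\eta)$ follows from $\theta_{\sigma,\eta'}\circ\theta_{x',\eta}=\theta_{\sigma,\eta}$ exactly as in \autoref{prp:compl:pi_from_r}(3), and $\hat\pi_q\circ\kappa_Y=\id_Y$ from $q\circ\gamma_{X,Y}=\id$. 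Since $q_\pi\circ\alpha_{X,Y}=r_\pi$ by definition, this identifies the induced bijection (b)$\to$(c) as $q\mapsto q\circ\alpha_{X,Y}$. The module statement (2) is then automatic: every $r$ equals $r_{\pi_r}$ and every $q$ equals $q_{\hat\pi_q}$, both of which are genuine right $\Bdd(X)$-module maps by \autoref{prp:compl:q_r_from_pi}. The norm identities (1) follow by sandwiching: \autoref{prp:compl:q_r_from_pi} gives $\|q_\pi\|,\|r_\pi\|\le\|\pi\|$, while $\|\pi\|=\|\pi_{r_\pi}\|\le\|r_\pi\|$ and $\|\pi\|=\|\hat\pi_{q_\pi}\|\le\|q_\pi\|$ give the reverse bounds.

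For the kernel statement (3) I would run a cycle of three implications, again leaning on rank-one operators to circumvent the non-surjectivity of $\alpha_{X,Y}$. If $\kernel(\pi)$ is \weakStar{} closed, then since the \weakStar{} topology on $\Bdd(X,Y^{**})=(X\tensProj Y^*)^*$ is pointwise-\weakStar{} in the $Y^{**}$-variable and $\kernel(q_\pi)=\{f:f(X)\subseteq\kernel(\pi)\}$, a net argument shows $\kernel(q_\pi)$ is \weakStar{} closed. Next, $r_\pi=q_\pi\circ\alpha_{X,Y}$ with $\alpha_{X,Y}$ \weakStar{} continuous (a transpose), so $\kernel(r_\pi)=\alpha_{X,Y}^{-1}(\kernel(q_\pi))$ is \weakStar{} closed whenever $\kernel(q_\pi)$ is. Finally, if $\kernel(r_\pi)$ is \weakStar{} closed, take a net $\sigma_i\to\sigma$ \weakStar{} with $\sigma_i\in\kernel(\pi)$; then $\Theta_\eta^{**}(\sigma_i)\to\Theta_\eta^{**}(\sigma)$ \weakStar{} (as $\Theta_\eta^{**}$ is a transpose, hence \weakStar{} continuous) and $r_\pi(\Theta_\eta^{**}(\sigma_i))=\theta_{\pi(\sigma_i),\eta}=0$, so the limit lies in $\kernel(r_\pi)$, forcing $\theta_{\pi(\sigma),\eta}=0$ and hence $\pi(\sigma)=0$ (using $X\neq\{0\}$ to pick $\eta\neq0$). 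This closes the cycle. I expect the main difficulty throughout to be precisely the failure of $\alpha_{X,Y}$ to be surjective: both the recovery of $q$ on all of $\Bdd(X,Y^{**})$ and the last implication of (3) must avoid pushing forward along $\alpha_{X,Y}$, and the device that rescues the argument is that the operators $\theta_{\sigma,\eta}=\alpha_{X,Y}(\Theta_\eta^{**}(\sigma))$ always lie in its image and depend \weakStar{} continuously on $\sigma$.
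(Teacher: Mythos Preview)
Your proposal is correct and follows essentially the same approach as the paper: the same cycle of assignments $\pi\mapsto q_\pi\mapsto q_\pi\circ\alpha_{X,Y}$ and $r\mapsto\pi_r$, the same use of \autoref{prp:compl:alpha_of_omega} to compute $\pi_{r_\pi}=\pi$ and $q=q_{\hat\pi_q}$ (your $\hat\pi_q$ is exactly the paper's $\pi_{q\circ\alpha}$ once you unravel $\alpha_{X,Y}\circ\Theta_\eta^{**}$), and the same cycle of implications for part~(3). Your closing implication in~(3) is actually slightly cleaner than the paper's---you push $\Theta_\eta^{**}(\sigma_i)$ directly through $\kernel(r_\pi)$, whereas the paper first multiplies by $\theta_{x,\eta}$---but both arguments are valid and rest on the same \weakStar{} continuity of $\Theta_\eta^{**}$.
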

\begin{proof}
Throughout the proof, we write $\alpha$ for $\alpha_{X,Y}$.
We let $\mathfrak{A}$, $\mathfrak{B}$ and $\mathfrak{C}$ denote the sets of maps as in~(a), (b), and (c), respectively.
Given $\pi\in \mathfrak{A}$, it follows from \autoref{prp:compl:q_r_from_pi} that $q_\pi\in \mathfrak{B}$ and $r_\pi\in \mathfrak{C}$.
Given $q\in \mathfrak{B}$, we have $q\circ\alpha\in\mathfrak{C}$ because $\alpha$ is a right $\Approx(X)$-module map satisfying $\alpha\circ\kappa_{\Bdd(X)}=\gamma_{X,Y}$;
see Lemmas~\ref{pgr:alpha:charAlpha} and~\ref{prp:alpha:alpha-bimod-full}.

Given $r\in \mathfrak{C}$, we have $r=r_{\pi_r}$ by \autoref{prp:compl:pi_from_r}(2).
It remains to show that $\pi=\pi_{q_\pi\circ\alpha}$ and $q=q_{\pi_{q\circ\alpha}}$ for all $\pi\in \mathfrak{A}$ and $q\in \mathfrak{B}$.

Fix $x\in X$ and $x^*\in X^*$ satisfying $\|x\|=\|x^*\|=\langle x,x^*\rangle=1$.
Let $\pi\in \mathfrak{A}$ and $y^{**}\in Y^{**}$.
Using \autoref{prp:compl:alpha_of_omega} at the third step, we obtain
\begin{align*}
\pi_{q_\pi\circ\alpha}(y^{**})
&= (\ev_x\circ q_\pi\circ\alpha\circ\Theta_{x^*}^{\tr\tr})(y^{**})
= \pi\left( \alpha(\Theta_{x^*}^{\tr\tr}(y^{**})(x) \right)\\
&= \pi(\theta_{y^{**},x^*}(x)) 
= \pi(y^{**}),
\end{align*}
so $\pi=\pi_{q_\pi\circ\alpha}$.
Next, let $q\in \mathfrak{B}$, let $f\in\Bdd(X,Y^{**})$, and let $z\in X$.
Using that $q$ is a right $\Approx(X)$-module map at the sixth step, and using \autoref{prp:compl:alpha_of_omega} at the fourth step, we obtain
\begin{align*}
q_{\pi_{q\circ\alpha}}(f)(z)
&= \pi_{q\circ\alpha}(f(z))
= \left( \ev_x\circ q\circ\alpha\circ\Theta_{x^*}^{\tr\tr} \right)(f(z))
= q(\alpha(\Theta_{x^*}^{\tr\tr}(f(z))))(x)\\
&= q(\theta_{f(z),x^*})(x)
= q(f\theta_{z,x^*})(x)
= q(f)\theta_{z,x^*}(x)
= q(f)(z).
\end{align*}

(1).
Given $\pi\in \mathfrak{A}$, we have $\|q_\pi\|\leq\|\pi\|$ by \autoref{prp:compl:q_r_from_pi};
we have $\|r_\pi\|=\|q_\pi\circ\alpha\|\leq\|q_\pi\|$ since $\alpha$ is contractive;
and we have $\|\pi\|\leq\|r_\pi\|$ by \autoref{prp:compl:pi_from_r}.
It follows that $\pi$, $q_\pi$ and $r_\pi$ have the same norms.

(2).
This follows directly from \autoref{prp:compl:q_r_from_pi}.

(3).
Let $\pi\in \mathfrak{A}$ and assume that $\kernel(\pi)$ is \weakStar{} closed in $Y^{**}$.
Given $f\in\Bdd(X,Y^{**})$, we have $q_\pi(f)=0$ if and only if $f(x)\in\kernel(\pi)$ for every $x\in X$.
To show that $\kernel(q_\pi)$ is \weakStar{} closed in $\Bdd(X,Y^{**})$, let $(f_j)_{j\in J}$ be a net in $\kernel(q_\pi)$ that converges \weakStar{} to $f\in\Bdd(X,Y^{**})$.
Given $x\in X$, the net $(f_j(x))_{j\in J}$ converges \weakStar{} to $f(x)$ in $Y^{**}$.
By assumption, $f_j(x)\in\kernel(\pi)$ for each $j$.
Since $\kernel(\pi)$ is \weakStar{} closed, it follows that $f(x)\in\kernel(\pi)$.
Hence $f\in\kernel(q_\pi)$.

Next, if $\kernel(q_\pi)$ is \weakStar{} closed in $\Bdd(X,Y^{**})$, then $\kernel(r_\pi)$ is \weakStar{} closed in $\Bdd(X,Y)^{**}$ since $r_\pi=q_\pi\circ\alpha$ and $\alpha$ is \weakStar{} continuous.

Lastly, assume that $\kernel(r_\pi)$ is \weakStar{} closed in $\Bdd(X,Y)^{**}$.
Fix $x\in X$ and $x^*\in X^*$ satisfying $\|x\|=\|x^*\|=\langle x,x^*\rangle=1$.
Given $f^{**}\in\Bdd(X,Y)^{**}$, it is straightforward to check that $r_\pi(f^{**})(x)=0$ if and only if $r(f^{**}\theta_{x,x^*})=0$.
To show that $\kernel(\pi)$ is \weakStar{} closed in $Y^{**}$, let $(y_i^{**})_{i\in I}$ be a net in $\kernel(\pi)$ that converges \weakStar{} to $y^{**}\in Y^{**}$.
For each $i$, we have
$0 = \pi(y_i^{**}) = r_\pi(\Theta_{x^*}^{\tr\tr}(y_i^{**}))(x)$,
and therefore $r_\pi(\Theta_{x^*}^{\tr\tr}(y_i^{**})\theta_{x,x^*})=0$.
Using that $\Theta_{x^*}^{\tr\tr}(y_i^{**})\theta_{x,x^*}$ converges \weakStar{} to $\Theta_{x^*}^{\tr\tr}(y^{**})\theta_{x,x^*}$, and using that $\kernel(r_\pi)$ is \weakStar{} closed, we deduce that $r_\pi(\Theta_{x^*}^{\tr\tr}(y^{**})\theta_{x,x^*})=0$.
Hence $\pi(y^{**})=0$, as desired.
\end{proof}

\begin{cor}
\label{prp:compl:charCompl}
Let $X$ and $Y$ be Banach spaces with $X\neq\{0\}$, and let $\lambda\in\RR$ with $\lambda\geq 1$.
Then the following are equivalent:
\begin{enumerate}
\item
$Y$ is $\lambda$-complemented in its bidual as a Banach space.
\item
$\Bdd(X,Y)$ is $\lambda$-complemented in its bidual as a Banach space.
\item
$\Bdd(X,Y)$ is $\lambda$-complemented in its bidual as a right $\Bdd(X)$-module.
\item
$\Bdd(X,Y)$ is $\lambda$-complemented in $\Bdd(X,Y^{**})$ as a right $\Bdd(X)$-module (or, equivalently, as a right $\Approx(X)$-module).
\end{enumerate}
\end{cor}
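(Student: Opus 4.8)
The plan is to deduce the entire chain of equivalences from \autoref{prp:compl:complBidual}, supplemented by \autoref{prp:compl:pi_from_r}. First I would dispose of the equivalences $(1)\Leftrightarrow(3)\Leftrightarrow(4)$, which are essentially a restatement of the correspondence theorem. Indeed, \autoref{prp:compl:complBidual} sets up norm-preserving bijections (by its part~(1)) between projections $\pi\colon Y^{**}\to Y$ in class~(a), right $\Approx(X)$-module projections $\Bdd(X,Y^{**})\to\Bdd(X,Y)$ in class~(b), and right $\Approx(X)$-module projections $\Bdd(X,Y)^{**}\to\Bdd(X,Y)$ in class~(c). Hence a projection of norm $\leq\lambda$ in any one class corresponds to one of norm $\leq\lambda$ in each of the others. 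Moreover, by part~(2) of \autoref{prp:compl:complBidual} every such $\Approx(X)$-module projection is automatically a right $\Bdd(X)$-module map, and conversely every $\Bdd(X)$-module map is an $\Approx(X)$-module map since $\Approx(X)\subseteq\Bdd(X)$; thus the $\Bdd(X)$- and $\Approx(X)$-module formulations of~(3) and~(4) coincide. Recalling from \autoref{pgr:alpha:gammaXY} that $\gamma_{X,Y}$ embeds $\Bdd(X,Y)$ isometrically as a closed right $\Bdd(X)$-submodule of $\Bdd(X,Y^{**})$, the correspondence between classes~(a) and~(c) gives $(1)\Leftrightarrow(3)$, and that between classes~(a) and~(b) gives $(1)\Leftrightarrow(4)$.

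The implication $(3)\Rightarrow(2)$ is immediate: a right $\Bdd(X)$-module projection $\Bdd(X,Y)^{**}\to\Bdd(X,Y)$ of norm $\leq\lambda$ is in particular a bounded Banach-space projection of norm $\leq\lambda$. It therefore remains only to close the loop by proving $(2)\Rightarrow(1)$, and this is where the sole genuine content resides. The delicate point is that condition~(2) provides merely a Banach-space projection $r\colon\Bdd(X,Y)^{**}\to\Bdd(X,Y)$ with $\|r\|\leq\lambda$, carrying no module compatibility, so one cannot directly feed it into the correspondence of \autoref{prp:compl:complBidual}. The key observation is that the construction $r\mapsto\pi_{r,x,\eta}$ of \autoref{pgr:compl:pi_from_r} does not require $r$ to be a module map: by part~(1) of \autoref{prp:compl:pi_from_r}, for an \emph{arbitrary} projection $r$ and any $x\in X$, $\eta\in X^*$ with $\|x\|=\|\eta\|=\langle x,\eta\rangle=1$, the map $\pi_{r,x,\eta}=\ev_x\circ r\circ\Theta_\eta^{**}$ already satisfies $\pi_{r,x,\eta}\circ\kappa_Y=\id_Y$ and $\|\pi_{r,x,\eta}\|\leq\|r\|\leq\lambda$. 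Such $x$ and $\eta$ exist because $X\neq\{0\}$: normalize any nonzero vector and invoke Hahn--Banach. Thus $\pi_{r,x,\eta}$ is a projection $Y^{**}\to Y$ of norm $\leq\lambda$, which is precisely condition~(1).

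The main obstacle is therefore conceptual rather than computational: one must recognize that \autoref{prp:compl:pi_from_r}(1) is robust enough to accept any bounded projection, so that the purely Banach-space hypothesis~(2) already forces $Y$ to be $\lambda$-complemented in its bidual even though the recovered projection $r$ need not respect the module structure. Once this is noted, everything else is bookkeeping with the norm-preserving bijections of \autoref{prp:compl:complBidual} and the trivial implication $(3)\Rightarrow(2)$, and the four conditions are seen to be equivalent.
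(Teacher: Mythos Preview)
Your proof is correct and follows essentially the same route as the paper: the equivalence of (1), (3), and (4) is read off from the norm-preserving correspondences in \autoref{prp:compl:complBidual}, the implication $(3)\Rightarrow(2)$ is trivial, and $(2)\Rightarrow(1)$ comes from \autoref{prp:compl:pi_from_r}(1), which indeed applies to an arbitrary Banach-space projection. Your elaboration of why \autoref{prp:compl:pi_from_r}(1) needs no module hypothesis is exactly the point the paper's one-line reference is gesturing at.
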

\begin{proof}
The equivalence of~(1), (3) and~(4) follows from \autoref{prp:compl:complBidual}.
That~(3) implies~(2) is obvious.
Finally, that~(2) implies~(1) follows from \autoref{prp:compl:pi_from_r}.
\end{proof}

\section{Preduals of \texorpdfstring{$\Bdd(X,Y)$}{L(X,Y)}}
\label{sec:predualBXY}

The main result of this section, \autoref{prp:predualBXY:correspondencePreduals}, asserts that there is a natural
one-to-one correspondence between (isometric) preduals of $Y$ and (isometric) preduals of $\Bdd(X,Y)$ making it a right dual $\Bdd(X)$-module.
In proving this result, the map $\alpha_{X,Y}$ from \autoref{dfn:alpha:alphaXY} will be crucial.

Throughout this section, $X$ and $Y$ denote Banach spaces.

\begin{dfn}
\label{dfn:predualBXY:predualBXY}
Let $F\subseteq Y^*$ be a closed subspace with inclusion map $\iota_F\colon F\to Y^*$.
We set
\[
\beta_{X,F}:= \omega_{X,Y}\circ(\id_X\tensProj\iota_F)\colon X\tensProj F\to\Bdd(X,Y)^*,
\]
and we define $X\tensPredual{X}{Y}F$ as the image of $\beta_{X,F}$.
\end{dfn}

\begin{lma}
\label{prp:predualBXY:predualBXYIso}
Let $F\subseteq Y^*$ be a predual.
Then $\beta_{X,F}\colon X\tensProj F\to X\tensPredual{X}{Y}F$ is a contractive isomorphism satisfying $\|\beta_{X,F}^{-1}\|\leq \|\delta_F^{-1}\|$.
In particular, $X\tensPredual{X}{Y}F$ is a closed subspace of $\Bdd(X,Y)^*$.
\end{lma}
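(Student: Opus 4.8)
The plan is to treat the two assertions separately: contractivity of $\beta_{X,F}$ is immediate, while the bound $\|\beta_{X,F}^{-1}\|\leq\isoConst(Y,F)$ carries all the content. For contractivity, I would observe that $\beta_{X,F}=\omega_{X,Y}\circ(\id_X\tensProj\iota_F)$ is a composition of contractions: the map $\omega_{X,Y}$ is contractive by \autoref{pgr:alpha:omegaXY}, and since the inclusion $\iota_F\colon F\to Y^*$ is isometric, the induced map $\id_X\tensProj\iota_F$ on projective tensor products has norm at most $\|\id_X\|\,\|\iota_F\|=1$.

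For the lower bound the key idea is to compute the projective norm on $X\tensProj F$ by duality and to transport test operators through the isomorphism $\delta_F\colon Y\to F^*$. Fix $t=\sum_{k=1}^n x_k\otimes\eta_k\in X\tensProj F$. Using the identification $(X\tensProj F)^*\cong\Bdd(X,F^*)$ recalled in \autoref{pgr:prelim:tensProj}, we have $\|t\|_{X\tensProj F}=\sup\{|\langle a,t\rangle|:a\in\Bdd(X,F^*),\ \|a\|\leq 1\}$, where $\langle a,x\otimes\eta\rangle=\langle a(x),\eta\rangle$. Given such an $a$, I would set $f:=\delta_F^{-1}\circ a\in\Bdd(X,Y)$, so that $\|f\|\leq\|\delta_F^{-1}\|=\isoConst(Y,F)$. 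A short computation using $\delta_F=\iota_F^*\circ\kappa_Y$ shows that $\langle a(x),\eta\rangle=\langle f(x),\eta\rangle$ for all $x\in X$ and $\eta\in F$, whence $\langle a,t\rangle=\sum_{k}\langle f(x_k),\eta_k\rangle$. Rescaling $f$ to norm at most $1$ and invoking the expression for $\|\beta_{X,F}(t)\|$ from \autoref{prp:predualBXY:computeNorm}, I obtain $|\langle a,t\rangle|\leq\isoConst(Y,F)\,\|\beta_{X,F}(t)\|$; taking the supremum over all admissible $a$ yields $\|t\|_{X\tensProj F}\leq\isoConst(Y,F)\,\|\beta_{X,F}(t)\|$.

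This estimate shows at once that $\beta_{X,F}$ is injective and bounded below, hence a contractive isomorphism onto its image with $\|\beta_{X,F}^{-1}\|\leq\isoConst(Y,F)$. Since $X\tensProj F$ is complete and $\beta_{X,F}$ is bounded below, its image $X\tensPredual{X}{Y}F$ is a closed subspace of $\Bdd(X,Y)^*$, which gives the final clause.

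The main obstacle is the lower-bound step, and within it the verification that every test functional $a\in\Bdd(X,F^*)$ arises, up to the constant $\isoConst(Y,F)$, from an honest operator $f\in\Bdd(X,Y)$ via $f=\delta_F^{-1}\circ a$. This is exactly where the hypothesis that $F$ be a predual, namely that $\delta_F$ be an isomorphism, is used, and where the constant $\isoConst(Y,F)=\|\delta_F^{-1}\|$ enters; the remaining bookkeeping (the pairing identity $\langle a(x),\eta\rangle=\langle f(x),\eta\rangle$ and the rescaling argument) is routine.
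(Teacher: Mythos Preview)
Your proposal is correct and follows essentially the same route as the paper: both arguments compute the projective norm on $X\tensProj F$ via the duality $(X\tensProj F)^*\cong\Bdd(X,F^*)$, push test operators through $\delta_F^{-1}$ to land in $\Bdd(X,Y)$, verify the pairing identity $\langle a(x),\eta\rangle=\langle f(x),\iota_F(\eta)\rangle$, and then invoke \autoref{prp:predualBXY:computeNorm}. The only cosmetic difference is that the paper chooses a single nearly norming functional with an $\varepsilon$-argument whereas you take a supremum directly; these are equivalent formulations of the same estimate.
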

\begin{proof}
Clearly $\beta_{X,F}$ is a contractive, linear map.
Let us show that $\beta_{X,F}$ is bounded below by $\|\delta_F^{-1}\|^{-1}$.
Let $t=\sum_{k=1}^n x_k\otimes y_k^* \in X\widehat{\otimes} F$ be a sum of simple tensors, and let $\varepsilon>0$.
Recall that $\|t\|_\pi$ denotes the projective tensor norm of $t$.
Choose $t^*\in(X\tensProj F)^*$ with $\|t^*\|\leq 1$ and $|\langle t^*,t\rangle|\geq\|t\|_\pi-\varepsilon$.
Using the natural isometric isomorphism $(X\tensProj F)^*\cong\Bdd(X,F^*)$ from \autoref{pgr:prelim:tensProj}, the functional $t^*$ corresponds to a contractive operator $g\in\Bdd(X,F^*)$ satisfying
\[
|\langle g,t\rangle|
= \left| \sum_{k=1}^n \langle g(x_k), y_k^* \rangle \right|
\geq\|t\|_\pi-\varepsilon.
\]
As in \autoref{dfn:predualBSp:concretePredualBSp}, set $\delta_F=\iota_F^\tr\circ\kappa_Y\colon Y\to F^*$.
Set $h:=\delta_F^{-1}\circ g\in\Bdd(X,Y)$.
Given $y^*\in F\subseteq Y^*$ and $y^{**}\in F^*\subseteq Y^{**}$, it is easy to check that $\langle y^{**},y^*\rangle=\langle\delta_F^{-1}(y^{**}),\iota_F(y^*)\rangle$.
Using this at the third step, we obtain
\begin{align*}
|\langle h,\beta_{X,F}(t)\rangle|
&= \left| \sum_{k=1}^n \left\langle h(x_k), \iota_F(y_k^*) \right\rangle \right|
= \left| \sum_{k=1}^n \left\langle \delta_F^{-1}(g(x_k)), \iota_F(y_k^*) \right\rangle \right| \\
&= \left| \sum_{k=1}^n \left\langle g(x_k), y_k^* \right\rangle \right|
= |\langle g,t\rangle|.
\end{align*}

Since $\|h\|\leq\|\delta_F^{-1}\|$, it follows that
\[
\|\beta_{X,F}(t)\|
\geq \|h\|^{-1} |\langle h,\beta_{X,F}(t)\rangle|
= \|h\|^{-1} |\langle g,t\rangle|
\geq \|\delta_F^{-1}\|^{-1} (\|t\|_\pi-\varepsilon).
\]
We conclude that $\|\beta_{X,F}(t)\| \geq  \|\delta_F^{-1}\|^{-1} \|t\|_\pi$, as desired.
\end{proof}

\begin{lma}
\label{prp:predualBXY:sliceInduced}
Let $x^*\in X^*$ with $\|x^*\|\leq 1$, let $F\subseteq Y^*$ be a predual, and let $R_{x^*}\colon X\tensProj F\to F$ be the map from \autoref{pgr:alpha:sliceMaps}.
Then $\left\| R_{x^*}\circ\beta_{X,F}^{-1} \right\| \leq 1$.
\end{lma}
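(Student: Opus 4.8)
The plan is to reduce the claimed operator-norm bound to a pointwise estimate on the algebraic tensor product and then, for each test vector in $Y$, exhibit a norm-one rank-one operator realizing the relevant pairing. Since $\beta_{X,F}$ is an isomorphism onto its image $X\tensPredual{X}{Y}F$ by \autoref{prp:predualBXY:predualBXYIso}, every $w\in X\tensPredual{X}{Y}F$ is of the form $w=\beta_{X,F}(t)$ for a unique $t\in X\tensProj F$, and $\|w\|=\|\beta_{X,F}(t)\|$. Hence $\|R_\varphi\circ\beta_{X,F}^{-1}\|\leq 1$ is equivalent to
\[
\|R_\varphi(t)\|_{Y^*}\leq\|\beta_{X,F}(t)\|
\]
for all $t\in X\tensProj F$, where $R_\varphi(t)\in F\subseteq Y^*$ carries the subspace norm inherited from $Y^*$. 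Both sides depend continuously on $t$ (recall $\|R_\varphi\|=\|\varphi\|\leq 1$ from \autoref{pgr:alpha:sliceMaps}), so by density it suffices to verify the inequality on finite sums of simple tensors $t=\sum_{k=1}^n x_k\otimes\eta_k$ with $x_k\in X$ and $\eta_k\in F$.

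For such $t$ we have $R_\varphi(t)=\sum_{k=1}^n\langle\varphi,x_k\rangle\eta_k\in F$, and I would compute its $Y^*$-norm by testing against vectors $y\in Y$ with $\|y\|\leq 1$. The key step is the observation that the rank-one operator $\theta_{y,\varphi}\in\Bdd(X,Y)$ from \autoref{pgr:alpha:theta} satisfies $\theta_{y,\varphi}(x_k)=\langle\varphi,x_k\rangle y$ and has norm $\|\theta_{y,\varphi}\|=\|y\|\,\|\varphi\|\leq 1$. Consequently
\[
\langle R_\varphi(t),y\rangle=\sum_{k=1}^n\langle\varphi,x_k\rangle\langle\eta_k,y\rangle=\sum_{k=1}^n\langle\theta_{y,\varphi}(x_k),\eta_k\rangle,
\]
and the right-hand side is bounded in absolute value by $\|\beta_{X,F}(t)\|$ in view of the norm formula of \autoref{prp:predualBXY:computeNorm} (applied to the contraction $f=\theta_{y,\varphi}$). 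Taking the supremum over $\|y\|\leq 1$ yields $\|R_\varphi(t)\|_{Y^*}\leq\|\beta_{X,F}(t)\|$, which is exactly the desired estimate.

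The only point requiring care is the bookkeeping of which pairing is used where: $R_\varphi(t)$ lives in $F$, but its norm must be evaluated as a functional on $Y$ rather than through the predual identification $F^*\cong Y$. Notably, the constant $\isoConst(Y,F)$ governing $\|\beta_{X,F}^{-1}\|$ in \autoref{prp:predualBXY:predualBXYIso} plays no role here, precisely because the test operators $\theta_{y,\varphi}$ are already contractive; this is what makes the composite a \emph{contraction} rather than merely bounded. There is no genuine obstacle beyond stating the reduction to finite sums and the density/continuity passage cleanly.
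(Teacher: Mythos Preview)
Your proof is correct and follows essentially the same approach as the paper: reduce to finite sums of simple tensors, evaluate $\|R_\varphi(t)\|$ by pairing against unit vectors $y\in Y$, recognize the resulting expression as $\sum_k\langle\theta_{y,\varphi}(x_k),\eta_k\rangle$ for the contractive rank-one operator $\theta_{y,\varphi}$, and invoke the norm formula of \autoref{prp:predualBXY:computeNorm}. The only cosmetic difference is that you spell out the density/continuity reduction and the reformulation via $\beta_{X,F}^{-1}$ more explicitly than the paper does.
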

\begin{proof}
Let $t=\sum_{k=1}^n x_k\otimes y_k^*\in X\widehat{\otimes} F$ be a sum of simple tensors.
Then $R_{x^*}(t)=\sum_{k=1}^n \langle x^*,x_k\rangle y_k^*$, by definition.
The result follows from the following computation, where we use the inclusion $F\subseteq Y^*$ at the first step, that $\|\theta_{y,x^*}\|=\|y\|\|x^*\|$ for every $y\in Y$ at the fourth step,
and the definition of the norm on $X\tensPredual{X}{Y}F$ at the last step:
\begin{align*}
\|R_{x^*}(t)\|
&= \sup \big\{ |\langle R_{x^*}(t), y \rangle| : y\in Y, \|y\|\leq 1 \big\} \\
&= \sup \left\{ \left|\sum_{k=1}^n \langle x^*,x_k\rangle \langle y_k^* , y \rangle \right| : y\in Y, \|y\|\leq 1 \right\} \\
&= \sup \left\{ \left|\sum_{k=1}^n \langle\theta_{y,x^*}(x_k), y_k^* \rangle \right| : y\in Y, \|y\|\leq 1 \right\} \\
&\leq \sup \left\{ \left|\sum_{k=1}^n\langle f(x_k), y_k^* \rangle \right| : f\in\Bdd(X,Y), \|f\|\leq 1 \right\}
= \|t\|_{X\tensPredual{X}{Y}F}.\qedhere
\end{align*}
\end{proof}

Given a linear map $\delta$, recall that $\delta_*$ denotes postcomposition with $\delta$.

\begin{thm}
\label{prp:predualBXY:predualBXY}
Let $X$ and $Y$ be Banach spaces, and let $F\subseteq Y^*$ be a predual.
Then $X\tensPredual{X}{Y}F$ is a predual of $\Bdd(X,Y)$ satisfying
$\|\delta_F^{-1}\|=\|\delta_{ X\tensPredual{X}{Y}F}^{-1}\|$.
Moreover, after identifying $\Bdd(X,F^*)$ with $(X\tensProj F)^*$, we have
\begin{align}
\label{prp:predualBXY:predualBXY:eqThm}
\delta_{X\tensPredual{X}{Y}F}
= (\beta_{X,F}^\tr)^{-1}\circ(\delta_F)_*, \andSep
\delta_{X\tensPredual{X}{Y}F}^{-1}
= (\delta_F^{-1})_* \circ \beta_{X,F}^\tr.
\end{align}

Finally, $(\Bdd(X,Y),X\tensPredual{X}{Y}F)$ is a right dual $\Bdd(X)$-module.
\end{thm}
\begin{proof}
Let $\iota\colon X\tensPredual{X}{Y}F \to \Bdd(X,Y)^*$ denote the inclusion map, and set $\delta_{X\tensPredual{X}{Y}F} := \iota^\tr\circ\kappa_{\Bdd(X,Y)}$.
We need to show that $\delta_{X\tensPredual{X}{Y}F}$ is an isomorphism.
Consider the following diagram, which is easily checked to commute:
\[
\xymatrix@R-10pt{
(X\tensProj F)^*
& (X\tensPredual{X}{Y}F)^* \ar[l]_-{\beta_{X,F}^\tr}
& {\Bdd(X,Y)^{**}} \ar@{->>}[l]_-{\iota^\tr}  \\
\Bdd(X,F^*) \ar@{}[u]|-{\vcong}
& & \Bdd(X,Y) \ar[ll]^-{(\delta_F)_*} \ar[u]_-{\kappa_{\Bdd(X,Y)}} \ar[ul]^-{\delta_{X\tensPredual{X}{Y}F}}
.}
\]

It follows from \autoref{prp:predualBXY:predualBXYIso} that $\beta_{X,F}^\tr$ is a contractive isomorphism.
Since $(\delta_F)_*$ is also an isomorphism, we conclude that $\delta_{X\tensPredual{X}{Y}F}$ is an isomorphism satisfying \eqref{prp:predualBXY:predualBXY:eqThm}.
We deduce that
\[
\left\| \delta_{X\tensPredual{X}{Y}F}^{-1} \right\|
= \left\| (\delta_F^{-1})_* \circ \beta_{X,F}^\tr \right\|
\leq \|(\delta_F^{-1})_*\|
= \|\delta_F^{-1}\|.
\]

To prove the inverse inequality, choose $x\in X$ and $x^*\in X^*$ with $\|x\|=\|x^*\|=\langle x^*,x\rangle=1$.
We claim that
\[
\delta_F^{-1}
= \ev_x\circ\delta_{X\tensPredual{X}{Y}F}^{-1}\circ(\beta_{X,F}^\tr)^{-1}\circ\Theta_{x^*}.
\]
Indeed, given $y^{**}\in F^*\subseteq Y^{**}$, we use \eqref{prp:predualBXY:predualBXY:eqThm} at the first step to get
\begin{align*}
\left( \ev_x\circ\delta_{X\tensPredual{X}{Y}F}^{-1}\circ(\beta_{X,F}^\tr)^{-1}\circ\Theta_{x^*} \right)(y^{**})
&= \left( \ev_x\circ(\delta_F^{-1})_*\circ\Theta_{x^*} \right)(y^{**}) \\
&= \left( \delta_F^{-1}\circ\theta_{y^{**},x^*} \right)(x)
= \delta_F^{-1}(y^{**}),
\end{align*}
proving the claim.
By \autoref{lma:evThetawkstercts}, we have $\Theta_{x^*}=R_{x^*}^\tr$.
Using this at the first step, and using \autoref{prp:predualBXY:sliceInduced} at the last step, we compute
\begin{align}
\left\| (\beta_{X,F}^\tr)^{-1}\circ\Theta_{x^*} \right\|
= \left\| (\beta_{X,F}^{-1})^\tr\circ R_{x^*}^\tr\right\|
= \left\| R_{x^*}\circ\beta_{X,F}^{-1}\right\|
\leq 1.
\end{align}
Since $\|\ev_x\|=\|x\|=1$, we conclude that
\begin{align*}
\| \delta_F^{-1} \|
= \left\| \ev_x\circ\delta_{X\tensPredual{X}{Y}F}^{-1}\circ(\beta_{X,F}^\tr)^{-1}\circ\Theta_{x^*} \right\| 
\leq \left\| \delta_{X\tensPredual{X}{Y}F}^{-1} \right\|.
\end{align*}

Lastly, note that $\omega_{X,Y}\colon X\tensProj Y^*\to\Bdd(X,Y)^*$ is a $\Bdd(X)$-$\Bdd(Y)$-bimodule map by \autoref{prp:alpha:alpha-bimod-full},
so $X\tensPredual{X}{Y}F$ is a left $\Bdd(X)$-submodule of $\Bdd(X,Y)^*$.
Thus, it follows from \autoref{prp:predualMod:charDualMod} that $(\Bdd(X,Y),X\tensPredual{X}{Y}F)$ is a right dual $\Bdd(X)$-module.
\end{proof}

The following is our main result.
It asserts that the correspondences from \autoref{prp:predualBSP:summary} and \autoref{prp:compl:complBidual} induce natural bijections
between (isometric) preduals of $Y$ and (isometric) preduals of $\Bdd(X,Y)$ turning it into a right $\Bdd(X)$-module.
\begin{thm}
\label{prp:predualBXY:correspondencePreduals}
Let $X$ and $Y$ be Banach spaces with $X\neq\{0\}$.
Then there exist natural one-to-one correspondences between the following sets:
\begin{enumerate}
\item[(a)]
Concrete preduals of $Y$.
\item[(b)]
Concrete preduals of $\Bdd(X,Y)$ making it a right dual $\Approx(X)$-module.
\item[(c)]
Projections $\pi\colon Y^{**} \to Y$ with \weakStar{} closed kernel.
\item[(d)]
Right $\Approx(X)$-module projections $r\colon\Bdd(X,Y)^{**}\to\Bdd(X,Y)$ with \weakStar{} closed kernel.
\item[(e)]
Right $\Approx(X)$-module projections $q\colon\Bdd(X,Y^{**})\to\Bdd(X,Y)$ with \weakStar{} closed kernel.
\end{enumerate}

Given a predual $F\subseteq Y^*$,
the corresponding predual of $\Bdd(X,Y)$ is $X\tensPredual{X}{Y}F$;
the corresponding maps in~(c), (d) and~(e) are $\pi_F$, $r_{\pi_F}$ and $q_{\pi_F}$.
Moreover:
\begin{enumerate}
\item
For a predual $F\subseteq Y^*$, we have
\[
\|\delta_F^{-1}\|
= \|\delta_{X\tensPredual{X}{Y}F}^{-1}\|
= \| \pi_F \|
= \| r_{\pi_F} \|
= \| q_{\pi_F} \|.
\]
In particular, $F$ is isometric if and only if so is $X\tensPredual{X}{Y}F$, and if and only if any (and thus all) of 
$\pi_F$, $r_{\pi_F}$ or $q_{\pi_F}$ has norm one.
\item
Every predual as in~(b) automatically makes $\Bdd(X,Y)$ a right dual $\Bdd(X)$-module.
Every map $r$ as in~(d), and every map $q$ as in~(e), is automatically a right $\Bdd(X)$-module map.
\end{enumerate}
\end{thm}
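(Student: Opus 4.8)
The plan is to obtain the five-fold correspondence by composing three bijections that are already at our disposal. First, \autoref{prp:predualBSP:summary} applied to $Y$ matches the concrete preduals of $Y$ in~(a) with the projections $\pi\colon Y^{**}\to Y$ with \weakStar{} closed kernel in~(c), sending $F\subseteq Y^*$ to the projection $\pi_F$ of \autoref{pgr:predualBSp:predualProj}. Second, \autoref{prp:compl:complBidual} matches the projections in~(c) with the right $\Approx(X)$-module projections in~(d) and~(e), via $\pi\mapsto r_\pi$ and $\pi\mapsto q_\pi$; by part~(3) of that theorem the \weakStar{} closedness of $\kernel(\pi)$ is equivalent to that of $\kernel(r_\pi)$ and of $\kernel(q_\pi)$, so the subclasses singled out in~(c), (d) and~(e) correspond. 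Third, \autoref{prp:predualBSP:summary} applied to $\Bdd(X,Y)$ matches the concrete preduals of $\Bdd(X,Y)$ with the projections $\Bdd(X,Y)^{**}\to\Bdd(X,Y)$ with \weakStar{} closed kernel, and \autoref{prp:predualMod:charDualMod} (for the Banach algebra $\Approx(X)$) records that such a predual makes $\Bdd(X,Y)$ a right dual $\Approx(X)$-module exactly when the associated projection is a right $\Approx(X)$-module map. This matches~(b) with~(d), and composing the three bijections links all five classes; naturality is inherited from the three constituents.

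It then remains to pin down the explicit maps and to check that $F$ is sent to $X\tensPredual{X}{Y}F$. Tracing $F$ along the chain gives $F\mapsto\pi_F\mapsto r_{\pi_F}$ (and $q_{\pi_F}$), and the predual in~(b) attached to $r_{\pi_F}$ is, by \autoref{pgr:predualBSp:predualProj} applied to $\Bdd(X,Y)$, the pre-annihilator of $\kernel(r_{\pi_F})$. On the other hand, \autoref{prp:predualBXY:predualBXY} shows that $X\tensPredual{X}{Y}F$ is itself a concrete predual of $\Bdd(X,Y)$ making it a right dual $\Bdd(X)$-module, hence a fortiori a right dual $\Approx(X)$-module; thus it lies in~(b), and by \autoref{prp:predualMod:charDualMod} its induced projection $\pi_{X\tensPredual{X}{Y}F}$ is a right $\Bdd(X)$-module map with \weakStar{} closed kernel, so it lies in~(d). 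Since the correspondence of \autoref{prp:compl:complBidual} between projections $Y^{**}\to Y$ and right $\Approx(X)$-module projections $\Bdd(X,Y)^{**}\to\Bdd(X,Y)$ is injective, to conclude $\pi_{X\tensPredual{X}{Y}F}=r_{\pi_F}$ it suffices to check that both have the same image under the inverse map $r\mapsto\pi_r=\ev_x\circ r\circ\Theta_\eta^{**}$. Fixing $x\in X$ and $\eta\in X^*$ with $\|x\|=\|\eta\|=\langle x,\eta\rangle=1$, this reduces to the identity
\[
\ev_x\circ\pi_{X\tensPredual{X}{Y}F}\circ\Theta_\eta^{**}=\pi_F,
\]
which I would verify by inserting the explicit formula $\delta_{X\tensPredual{X}{Y}F}^{-1}=(\delta_F^{-1})_*\circ\beta_{X,F}^*$ from \autoref{prp:predualBXY:predualBXY} into the description $\pi_{X\tensPredual{X}{Y}F}=\delta_{X\tensPredual{X}{Y}F}^{-1}\circ\iota^*$ (with $\iota$ the inclusion into $\Bdd(X,Y)^*$) and unwinding on simple tensors of $X\tensProj F$. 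This identification is the one genuinely new point, and the main — if routine — obstacle of the proof.

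Finally I would read off the two displayed clauses. For~(1), \autoref{prp:predualBSP:summary} gives $\isoConst(Y,F)=\|\pi_F\|$, part~(1) of \autoref{prp:compl:complBidual} gives $\|\pi_F\|=\|r_{\pi_F}\|=\|q_{\pi_F}\|$, and \autoref{prp:predualBXY:predualBXY} gives $\isoConst\big(\Bdd(X,Y),X\tensPredual{X}{Y}F\big)=\isoConst(Y,F)$; concatenating these produces the asserted chain of equalities. For~(2), part~(2) of \autoref{prp:compl:complBidual} already shows that every $r$ in~(d) and every $q$ in~(e) is automatically a right $\Bdd(X)$-module map; applying \autoref{prp:predualMod:charDualMod} once more, now for $\Bdd(X)$, upgrades the corresponding predual in~(b) to one making $\Bdd(X,Y)$ a right dual $\Bdd(X)$-module, completing the argument.
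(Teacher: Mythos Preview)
Your proposal is correct and follows essentially the same architecture as the paper: the three bijections (a)$\leftrightarrow$(c) via \autoref{prp:predualBSP:summary}, (c)$\leftrightarrow$(d)$\leftrightarrow$(e) via \autoref{prp:compl:complBidual}, and (b)$\leftrightarrow$(d) via \autoref{prp:predualMod:charDualMod}, together with clauses~(1) and~(2) read off from the cited results exactly as you describe.

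The only point where your route differs slightly is in identifying the predual associated to $F$ with $X\tensPredual{X}{Y}F$. You propose to show $\pi_{X\tensPredual{X}{Y}F}=r_{\pi_F}$ indirectly, by pushing both through the inverse map $r\mapsto\pi_r=\ev_x\circ r\circ\Theta_\eta^{**}$ and checking that each lands on $\pi_F$. The paper instead computes $\pi_{X\tensPredual{X}{Y}F}=r_{\pi_F}$ \emph{directly}: from $\omega_{X,Y}\circ(\id_X\tensProj\iota_F)=\iota\circ\beta_{X,F}$ one takes transposes to get $\beta_{X,F}^*\circ\iota^*=(\iota_F^*)_*\circ\alpha_{X,Y}$, and then
\[
\pi_{X\tensPredual{X}{Y}F}
=\delta_{X\tensPredual{X}{Y}F}^{-1}\circ\iota^*
=(\delta_F^{-1})_*\circ\beta_{X,F}^*\circ\iota^*
=(\delta_F^{-1}\circ\iota_F^*)_*\circ\alpha_{X,Y}
=(\pi_F)_*\circ\alpha_{X,Y}
=r_{\pi_F}.
\]
This avoids the auxiliary choice of $x,\eta$ and the detour through $\pi_r$; your approach works but is one step longer, since after your simple-tensor computation you still need to invoke injectivity of the correspondence in \autoref{prp:compl:complBidual}.
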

\begin{proof}
The correspondence between~(a) and~(c) follows from \autoref{pgr:predualBSp:predualProj}.
The correspondence between~(c), (d) and~(e) follows from \autoref{prp:compl:complBidual}(3).
The correspondence between~(b), and~(d) follows from \autoref{prp:predualMod:charDualMod}.

Given a predual $F\subseteq Y^*$ as in~(a), it follows from \autoref{prp:predualBXY:predualBXY} that $X\tensPredual{X}{Y}F$ is a predual of $\Bdd(X,Y)^*$.
Let us show that $X\tensPredual{X}{Y}F$ is the `correct' predual as in~(b) corresponding to $F$.
Let $\iota_F\colon F\to Y^*$ denote the inclusion map, let $\id_X\tensProj\iota_F\colon X\tensProj F\to X\tensProj Y^*$ denote the induced map, and let $\iota\colon X\tensPredual{X}{Y}F\to\Bdd(X,Y)^*$ denote the inclusion map.
It is easy to check that
\[
\omega_{X,Y}\circ(\id_X\tensProj\iota_F) = \iota\circ\beta_{X,F}.\]
Since the transpose of $\id_X\tensProj\iota_F$ is $(\iota_F^\tr)_*\colon\Bdd(X,Y^{**})\to\Bdd(X,F^*)$, the following diagram commutes:
\[
\xymatrix@R-10pt{
\Bdd(X,Y^{**}) \ar[d]_-{(\iota_F^\tr)_*}
&& \Bdd(X,Y)^{**} \ar[ll]_-{\alpha_{X,Y}} \ar@{->>}[d]^-{\iota^\tr}
&& \Bdd(X,Y) \ar@{_{(}->}[ll]_-{\kappa_{\Bdd(X,Y)}} \ar[dll]^{\delta_{X\tensPredual{X}{Y}F}} \\
\Bdd(X,F^*)
&& (X\tensPredual{X}{Y}F)^* \ar[ll]^-{\beta_{X,F}^\tr}
.}
\]

We set $\pi_F=\delta_F^{-1}\circ\iota_F^\tr$, which is the map as in~(c) corresponding to $F$.
Then $(\pi_F)_*=q_{\pi_F}$.
The map as in~(d) corresponding to $\pi_F$ is $r_{\pi_F}=(\pi_F)_*\circ\alpha_{X,Y}$.
Considering the predual $X\tensPredual{X}{Y}F$ as in~(b), the corresponding map as in~(d) is $\pi_{X\tensPredual{X}{Y}F}=\delta_{X\tensPredual{X}{Y}F}^{-1} \circ\iota^\tr$.
Using \autoref{prp:predualBXY:predualBXY} at the second step, we deduce that
\begin{align*}
\pi_{X\tensPredual{X}{Y}F}
&= \delta_{X\tensPredual{X}{Y}F}^{-1} \circ\iota^\tr
= (\delta_F^{-1})_* \circ\beta_{X,F}^\tr \circ\iota^\tr
= (\delta_F^{-1})_* \circ(\iota_F^\tr)_* \circ\alpha_{X,Y} \\
&= (\delta_F^{-1}\circ\iota_F^\tr)_* \circ\alpha_{X,Y}
= (\pi_F)_* \circ\alpha_{X,Y}
= r_{\pi_F}.
\end{align*}

(1).
This follows directly from \autoref{prp:compl:complBidual}(1), \autoref{prp:predualBXY:predualBXY} and \autoref{prp:predualBSP:summary}.

(2).
It follows from \autoref{prp:compl:complBidual}(2) that the maps in~(d) and~(e) are automatically right $\Bdd(X)$-module maps.
Given a predual $G\subseteq\Bdd(X,Y)^*$ as in~(b), the associated map $\pi_G$ as in~(d)
is a right $\Approx(X)$-module map, and therefore automatically a right $\Bdd(X)$-module map.
It follows from \autoref{prp:predualMod:charDualMod} that the predual $G$ makes $\Bdd(X,Y)$ a right dual $\Bdd(X)$-module.
\end{proof}

For the case of strongly unique preduals (see \autoref{pgr:predualBSp:unique}), we immediately deduce the following result, which is new even in the case of isometric preduals.

\begin{cor}
\label{prp:predualBXY:uniquePreduals}
Let $Y$ be a Banach space.
Then the following are equivalent:
\begin{enumerate}
\item
$Y$ has a strongly unique (isometric) predual.
\item
For some (any) Banach space $X$ with $X\neq\{0\}$, the space $\Bdd(X,Y)$ has a strongly unique (isometric) predual making it a right dual $\Bdd(X)$-module.
\item
$\Bdd(Y)$ has a strongly unique (isometric) predual making it a right dual Banach algebra.
\end{enumerate}
\end{cor}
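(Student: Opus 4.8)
The plan is to translate each strong-uniqueness statement into a condition on the \emph{number} of concrete preduals, and then read off the conclusion from the bijection of \autoref{prp:predualBXY:correspondencePreduals}. By \autoref{pgr:predualBsp:predualsConcrete} and \autoref{prp:predualBSP:summary}, concrete preduals biject with equivalence classes of preduals; hence, in the sense of \autoref{pgr:predualBSp:unique}, $Y$ has a strongly unique predual exactly when it has at most one concrete predual (and a strongly unique isometric predual exactly when it has precisely one concrete isometric predual). The analogous reformulation applies to preduals of $\Bdd(X,Y)$ making it a right dual $\Bdd(X)$-module.

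For the equivalence of (1) and (2), I would fix any $X\neq\{0\}$ and invoke the bijection $F\mapsto X\tensPredual{X}{Y}F$ of \autoref{prp:predualBXY:correspondencePreduals} between concrete preduals of $Y$ and concrete preduals of $\Bdd(X,Y)$ making it a right dual $\Approx(X)$-module; by part~(2) of that theorem the latter are automatically right dual $\Bdd(X)$-modules, so the two module notions agree here. Being a bijection, it preserves the cardinality of these sets, so $Y$ has at most one concrete predual if and only if $\Bdd(X,Y)$ has at most one concrete predual making it a right dual $\Bdd(X)$-module. Since this reasoning is valid for every $X\neq\{0\}$ with the same source space $Y$, the truth of (2) does not depend on the choice of $X$, which justifies the `some (any)' phrasing and establishes (1) $\Leftrightarrow$ (2).

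For the isometric version, the only additional input is part~(1) of \autoref{prp:predualBXY:correspondencePreduals}, which gives $\isoConst(Y,F)=\isoConst(\Bdd(X,Y),X\tensPredual{X}{Y}F)$; thus $F$ is isometric precisely when $X\tensPredual{X}{Y}F$ is, and the bijection restricts to one between concrete isometric preduals. The same cardinality argument then yields the isometric equivalence. Finally, (3) is the special case $X=Y$ of (2): for $X=Y$ the right $\Bdd(X)$-module structure on $\Bdd(X,Y)=\Bdd(Y)$ is given by right multiplication, so a predual making it a right dual $\Bdd(Y)$-module is precisely one making it a right dual Banach algebra. Since everything is a formal consequence of the established bijection, I expect no genuine obstacle; the only point needing care is the bookkeeping that keeps the isometric and non-isometric cases, together with the module-versus-algebra reformulation, consistently aligned.
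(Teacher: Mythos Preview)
Your proposal is correct and is exactly the argument the paper has in mind: the corollary is stated as an immediate consequence of \autoref{prp:predualBXY:correspondencePreduals}, with no further proof given, and your write-up simply makes explicit the cardinality-of-preduals reasoning (together with the isomorphism-constant identity for the isometric case and the specialization $X=Y$ for~(3)) that this word ``immediate'' encodes.
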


\section{Preduals of \texorpdfstring{$\Bdd(X,Y)$}{L(X,Y)} and \texorpdfstring{$\Bdd(Y)$}{L(Y)} for \texorpdfstring{$Y$}{Y} reflexive}
\label{sec:refl}

In this section, we first obtain characterizations of reflexivity of $Y$ in terms of properties of the $\Bdd(Y)$-$\Bdd(X)$-bimodule $\Bdd(X,Y)$;
see \autoref{prp:refl:charYRefl_BXY}.
Secondly, under the assumption that $Y$ be reflexive, we study uniqueness of preduals of $\Bdd(X,Y)$, and projections from $\Bdd(X,Y)^{**}$ onto $\Bdd(X,Y)$;
see \autoref{prp:refl:unique_BXY}.
Applied to the case $X=Y$, we extend results of Daws from \cite{Daw04PhD} and \cite{Daw07DualBAlgReprInj}.

\begin{lma}
\label{prp:refl:q_leftMod}
Let $X$ and $Y$ be Banach spaces with $X\neq\{0\}$, and assume that there exists a left $\Approx(Y)$-module projection $r\colon\Bdd(X,Y)^{**}\to\Bdd(X,Y)$.
Then $Y$ is reflexive.
\end{lma}
\begin{proof}
Choose $x\in X$ and $x^*\in X^*$ with $\langle x,x^*\rangle =1$.
Let $\Theta_{x^*}\colon Y\to\Bdd(X,Y)$ and $\ev_x\colon\Bdd(X,Y)\to Y$ be as in \autoref{pgr:alpha:theta}, and define $\pi\colon Y^{**}\to Y$ as $\pi:=\ev_x\circ r\circ\Theta_{x^*}^{\tr\tr}$. 
Then $\pi\circ\kappa_Y=\id_Y$ and thus, to check that $\kappa_Y$ is an isomorphism, it suffices to show that $\pi$ is injective.
Let $y^{**}\in Y^{**}$ be nonzero.
Choose $y^*\in Y^*$ with $\langle y^*, y^{**}\rangle=1$, and choose $y\in Y$ nonzero.
It is straightforward to verify that $\theta_{y,y^*}\Theta_{x^*}^{\tr\tr}(y^{**})=\kappa_{\Bdd(X,Y)}(\theta_{y,x^*})$.
Using this at the third step, and using at the second step that $r$ is a left $\Approx(Y)$-module map, we get
\begin{align*}
\theta_{y,y^*}[ \pi(y^{**}) ]
&= \theta_{y,y^*}[ r(\Theta_{x^*}^{\tr\tr}(y^{**}))(x) ] 
= r( \theta_{y,y^*} \Theta_{x^*}^{\tr\tr}(y^{**}) )(x) \\
&= r( \kappa_{\Bdd(X,Y)}(\theta_{y,x^*}) )(x) 
= \theta_{y,x^*}(x) = y \neq 0,
\end{align*}
which shows that $\pi(y^{**})\neq 0$, as desired.
\end{proof}

\begin{thm}
\label{prp:refl:charYRefl_BXY}
Let $X$ and $Y$ be Banach spaces with $X\neq\{0\}$.
Then the following are equivalent:
\begin{enumerate}
\item
$Y$ is reflexive.
\item
$\Bdd(X,Y)$ has a predual making it a dual $\Bdd(Y)$-$\Bdd(X)$-bimodule.
\item
$\Bdd(X,Y)$ has a predual making it a left dual $\Approx(Y)$-module.
\item
$\Bdd(X,Y)$ is complemented in its bidual as a $\Bdd(Y)$-$\Bdd(X)$-bimodule.
\item
$\Bdd(X,Y)$ is complemented in its bidual as a left $\Approx(Y)$-module.
\item
$\Bdd(X,Y)$ is complemented in $\Bdd(X,Y^{**})$ as a $\Bdd(Y)$-$\Bdd(X)$-bimodule.
\item
$\Bdd(X,Y)$ is complemented in $\Bdd(X,Y^{**})$ as a left $\Approx(Y)$-module.
\end{enumerate}

Moreover, in~(2) and~(3) we may equivalently assume that the predual is isometric, and in~(4)-(7) we may equivalently replace `complemented' by `$1$-complemented'.
\end{thm}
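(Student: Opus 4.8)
The plan is to prove that \textup{(1)} implies all other conditions, that each of \textup{(2)}--\textup{(7)} implies \textup{(5)}, and finally that \textup{(5)} implies \textup{(1)}. The last implication is the substantive one, and it is already available: condition~\textup{(5)} asserts precisely the existence of a left $\Approx(Y)$-module projection $\Bdd(X,Y)^{**}\to\Bdd(X,Y)$, so \autoref{prp:refl:q_leftMod} yields reflexivity of $Y$ (and, moreover, that such a projection must equal $\alpha_{X,Y}$).

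For \textup{(1)}$\Rightarrow$ the rest, I would assume $Y$ reflexive and use $\kappa_Y$ to identify $\Bdd(X,Y^{**})$ with $\Bdd(X,Y)$ isometrically via $\gamma_{X,Y}$. Under this identification $\alpha_{X,Y}\colon\Bdd(X,Y)^{**}\to\Bdd(X,Y)$ is a contractive projection, since $\alpha_{X,Y}\circ\kappa_{\Bdd(X,Y)}=\gamma_{X,Y}$ (\autoref{prp:alpha:alpha-kappa-gamma}), and by \autoref{prp:alpha:alpha-bimod-reduced} it is simultaneously a left $(\Bdd(Y)^{**},\ltArensProd)$- and a right $(\Bdd(X)^{**},\rtArensProd)$-module map. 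This immediately gives \textup{(4)} and \textup{(6)} with norm-one projections (for \textup{(6)} the relevant projection is the bimodule isomorphism $\gamma_{X,Y}^{-1}$). For \textup{(2)} I would invoke \autoref{pgr:refl:canonicalPredualBXY}, under which $X\tensProj Y^*$ is an isometric predual of $\Bdd(X,Y)$ whose associated projection is exactly $\alpha_{X,Y}$; since the kernel of the projection $\id_{Y^{**}}$ is trivially \weakStar{} closed, \autoref{prp:compl:complBidual}\textup{(3)} shows the kernel of $\alpha_{X,Y}$ is \weakStar{} closed, and \autoref{prp:predualMod:charDualMod} then upgrades ``$\alpha_{X,Y}$ is a bimodule projection'' to the statement that $X\tensProj Y^*$ makes $\Bdd(X,Y)$ a dual bimodule.

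The implications into \textup{(5)} split into straightforward weakenings and two genuine reductions. The weakenings are \textup{(2)}$\Rightarrow$\textup{(3)}, \textup{(4)}$\Rightarrow$\textup{(5)}, and \textup{(6)}$\Rightarrow$\textup{(7)}, obtained by forgetting the right-module compatibility and retaining only the left $\Approx(Y)$-action. For \textup{(3)}$\Rightarrow$\textup{(5)} I would apply \autoref{prp:predualMod:charDualMod} in the reverse direction: a predual making $\Bdd(X,Y)$ a left dual $\Approx(Y)$-module produces a left $\Approx(Y)$-module projection $\Bdd(X,Y)^{**}\to\Bdd(X,Y)$. The one computation requiring care is \textup{(7)}$\Rightarrow$\textup{(5)}: given a left $\Approx(Y)$-module projection $q\colon\Bdd(X,Y^{**})\to\Bdd(X,Y)$, I set $r=q\circ\alpha_{X,Y}$; then $r\circ\kappa_{\Bdd(X,Y)}=q\circ\gamma_{X,Y}=\id$, so $r$ is a projection, and because $\alpha_{X,Y}$ is a left $(\Bdd(Y)^{**},\ltArensProd)$-module map (\autoref{prp:alpha:alpha-bimod-full}), $r$ inherits the left $\Approx(Y)$-module property. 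Combining, every one of \textup{(2)}--\textup{(7)} reaches \textup{(5)}, closing the circle with \textup{(5)}$\Rightarrow$\textup{(1)}.

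Finally, the refinements in the ``Moreover'' clause come for free from the forward direction: the predual $X\tensProj Y^*$ constructed under \textup{(1)} is isometric and the projections exhibited are contractive, so \textup{(1)} in fact yields the isometric form of \textup{(2)}--\textup{(3)} and the $1$-complemented form of \textup{(4)}--\textup{(7)}; conversely these stronger statements trivially imply their weaker counterparts, and hence \textup{(1)}. The main obstacle is not in the present argument but is already encapsulated in \autoref{prp:refl:q_leftMod}; within this proof the only subtle point is verifying that passing through $\alpha_{X,Y}$ in \textup{(7)}$\Rightarrow$\textup{(5)} preserves the left $\Approx(Y)$-action, which is exactly where \autoref{prp:alpha:alpha-bimod-full} is needed.
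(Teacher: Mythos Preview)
Your proof is correct and follows essentially the same cycle of implications as the paper: the obvious weakenings \textup{(2)}$\Rightarrow$\textup{(3)}, \textup{(4)}$\Rightarrow$\textup{(5)}, \textup{(6)}$\Rightarrow$\textup{(7)}; the reduction \textup{(3)}$\Rightarrow$\textup{(5)} via \autoref{prp:predualMod:charDualMod}; the reduction \textup{(7)}$\Rightarrow$\textup{(5)} by composing with $\alpha_{X,Y}$; and the key step \textup{(5)}$\Rightarrow$\textup{(1)} via \autoref{prp:refl:q_leftMod}. The only noticeable difference is in \textup{(1)}$\Rightarrow$\textup{(2)}: the paper verifies directly, using \autoref{prp:alpha:bimod-BXYd-wkStar}, that the left and right module actions on $\Bdd(X,Y)\cong(X\tensProj Y^*)^*$ are \weakStar{} continuous, whereas you instead identify $\alpha_{X,Y}$ as the projection associated to $X\tensProj Y^*$ and then invoke \autoref{prp:predualMod:charDualMod} to pass from ``$\alpha_{X,Y}$ is a bimodule projection'' to ``$X\tensProj Y^*$ makes $\Bdd(X,Y)$ a dual bimodule''. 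Both routes are valid; yours fits naturally into the predual--projection correspondence developed in the paper, while the paper's is slightly more direct. Your appeal to \autoref{prp:compl:complBidual}\textup{(3)} for the \weakStar{} closedness of $\kernel(\alpha_{X,Y})$ is harmless but redundant once you know $\alpha_{X,Y}=\pi_{X\tensProj Y^*}$.
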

\begin{proof}
It is clear that~(1) implies~(6), that~(2) implies~(3), that~(4) implies~(5), and that~(6) implies~(7).
It follows from \autoref{prp:refl:q_leftMod} that~(5) implies~(1).

To show that ~(3) implies~(5), let $F\subseteq\Bdd(X,Y)^*$ be a predual making $\Bdd(X,Y)$ a left dual $\Approx(Y)$-module.
It follows from \autoref{prp:predualMod:charDualMod} that $\pi_F$ is a left $\Approx(Y)$-module map, as desired.
One shows that~(2) implies~(4) analogously.

To show that~(7) implies~(5), let $q\colon\Bdd(X,Y^{**})\to\Bdd(X,Y)$ be a left $\Approx(Y)$-module projection. 
Using that $\alpha_{X,Y}$ is a left $\Approx(Y)$-module map with $\alpha_{X,Y}\circ\kappa_{\Bdd(X,Y)}=\gamma_{X,Y}$ (see Lemmas~\ref{prp:alpha:alpha-bimod-full} and~\ref{pgr:alpha:charAlpha}), we get that $\Bdd(X,Y)$ is complemented in its bidual as a left $\Approx(Y)$-module via the map $q\circ\alpha_{X,Y}$.

It remains to show that~(1) implies~(2).
If $Y$ is reflexive,
then $(X\tensProj Y^*)^* \cong \Bdd(X,Y^{**}) \cong \Bdd(X,Y)$, and 
thus $X\tensProj Y^*$ is an isometric predual of $\Bdd(X,Y)$.
Then $X\tensProj Y^*$ makes $\Bdd(X,Y)$ a dual $\Bdd(Y)$-$\Bdd(X)$-bimodule by \autoref{prp:alpha:bimod-BXYd-wkStar}.
\end{proof}

As an application, we obtain several characterizations of reflexivity.
The equivalence between (1) and (2) has also been obtained by Daws in \cite[Proposition~4.2.1]{Daw04PhD}, while the remaining ones are new.
The statements in (4), (5) and (6) can be regarded as algebraic characterizations of reflexivity.

\begin{cor}
\label{prp:refl:charXRefl_BX}
Let $X$ be a Banach space.
Then the following are equivalent:
\begin{enumerate}
\item
$X$ is reflexive.
\item
$\Bdd(X)$ has a predual making it a dual Banach algebra.
\item
$\Bdd(X)$ has a predual making it a left dual Banach algebra.
\item
$\Bdd(X)$ is complemented in its bidual as a left $\Approx(X)$-module.
\item
There exists a projection $r\colon\Bdd(X)^{**}\to\Bdd(X)$ 
that is multiplicative for the left (equivalentely, for the right, or both) Arens product on $\Bdd(X)^{**}$.
\item
There exists a multiplicative projection $q\colon\Bdd(X,X^{**})\to\Bdd(X)$. 
\end{enumerate}
\end{cor}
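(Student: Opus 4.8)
The plan is to derive the module-theoretic equivalences (1)--(4) from \autoref{prp:refl:charYRefl_BXY} specialized to $Y=X$, and then to attach the genuinely new algebraic conditions (5) and (6) by translating multiplicativity of a projection into a one-sided module-map condition. We may assume $X\neq\{0\}$, the case $X=\{0\}$ being trivial. The conceptual bridge is that, viewing $\Bdd(X)$ as a module over itself by left and right multiplication, a predual makes $\Bdd(X)$ a dual Banach algebra (respectively a left dual Banach algebra) exactly when it makes $\Bdd(X)$ a dual $\Bdd(X)$-$\Bdd(X)$-bimodule (respectively a left dual $\Bdd(X)$-module). I would prove the cycle (1)$\Rightarrow$(2)$\Rightarrow$(3)$\Rightarrow$(4)$\Rightarrow$(1) and then graft (5) and (6) onto it.

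For the cycle, (1)$\Rightarrow$(2) is the corresponding implication of \autoref{prp:refl:charYRefl_BXY} with $Y=X$, the predual being $X\tensProj X^*$; and (2)$\Rightarrow$(3) is trivial. For (3)$\Rightarrow$(4), given a predual $F\subseteq\Bdd(X)^*$ for which left multiplication is \weakStar{} continuous, restricting the action to $\Approx(X)$ shows that $F$ makes $\Bdd(X)$ a left dual $\Approx(X)$-module, whence \autoref{prp:predualMod:charDualMod} gives that the associated projection $\pi_F\colon\Bdd(X)^{**}\to\Bdd(X)$ is a left $\Approx(X)$-module map; this is (4). Finally (4)$\Rightarrow$(1) is exactly \autoref{prp:refl:q_leftMod} with $Y=X$.

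The substance is conditions (5) and (6). For (5)$\Rightarrow$(1), suppose $r\colon\Bdd(X)^{**}\to\Bdd(X)$ is a projection that is multiplicative for one of the Arens products. Using the standard fact that the two Arens products coincide when one factor lies in the canonical image of $\Bdd(X)$ (so that for $a\in\Approx(X)$ both $\kappa_{\Bdd(X)}(a)\ltArensProd F$ and $\kappa_{\Bdd(X)}(a)\rtArensProd F$ equal the unambiguous left action of $a$ on $F$), together with $r\circ\kappa_{\Bdd(X)}=\id_{\Bdd(X)}$ and multiplicativity, one computes $r(a\cdot F)=a\cdot r(F)$ for all $a\in\Approx(X)$ and $F\in\Bdd(X)^{**}$. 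Thus $r$ is a left $\Approx(X)$-module projection and $X$ is reflexive by \autoref{prp:refl:q_leftMod}; since the argument is insensitive to the chosen Arens product, this simultaneously delivers the parenthetical equivalences in~(5). Conversely, if $X$ is reflexive then, identifying $X^{**}$ with $X$, the map $\alpha_X\colon\Bdd(X)^{**}\to\Bdd(X)$ satisfies $\alpha_X\circ\kappa_{\Bdd(X)}=\gamma_X=\id_{\Bdd(X)}$ (by \autoref{prp:alpha:alpha-kappa-gamma}) and is multiplicative for both Arens products by \autoref{prp:alpha:alphaBXX-multiplicative} and \autoref{rmk:alpha:alphaBXX-multiplicative}; hence it is a multiplicative projection, giving (5).

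It remains to fit in (6). If $X$ is reflexive, then $\gamma_X\colon\Bdd(X)\to\Bdd(X,X^{**})$ is an isometric algebra isomorphism (as $\kappa_X$ is surjective), so $q=\gamma_X^{-1}$ is a multiplicative projection, giving (6). Conversely, given a multiplicative projection $q\colon\Bdd(X,X^{**})\to\Bdd(X)$ with $q\circ\gamma_X=\id_{\Bdd(X)}$, I would compose it with $\alpha_X$: since $\alpha_X$ is multiplicative for the second Arens product (\autoref{prp:alpha:alphaBXX-multiplicative}) and $\alpha_X\circ\kappa_{\Bdd(X)}=\gamma_X$ (\autoref{prp:alpha:alpha-kappa-gamma}), the composite $q\circ\alpha_X\colon\Bdd(X)^{**}\to\Bdd(X)$ is a projection that is multiplicative for the second Arens product, so (6) implies (5) and the chain closes. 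I expect the only delicate point to be the computation in (5): one must correctly match the left $(\Bdd(X)^{**},\ltArensProd)$- and $(\Bdd(X)^{**},\rtArensProd)$-module actions on the image of $\kappa_{\Bdd(X)}$, so that multiplicativity for \emph{either} Arens product feeds into the single left-module hypothesis of \autoref{prp:refl:q_leftMod}.
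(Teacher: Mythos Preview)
Your proposal is correct and follows essentially the same route as the paper: the equivalence of (1)--(4) is obtained from \autoref{prp:refl:charYRefl_BXY} with $Y=X$ (you unpack the relevant implications explicitly, but they are the same ones), (1)$\Rightarrow$(5) via $\alpha_X$ and \autoref{rmk:alpha:alphaBXX-multiplicative}, (5)$\Rightarrow$(4) by the observation that multiplicativity forces the left $\Approx(X)$-module property, (1)$\Rightarrow$(6) since $\gamma_X$ is then an isomorphism, and (6)$\Rightarrow$(5) by composing with $\alpha_X$. Your treatment of the parenthetical in (5), using that the two Arens products agree when one factor is in $\kappa_{\Bdd(X)}(\Bdd(X))$, is the right justification and is only implicit in the paper.
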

\begin{proof}
The equivalence of~(1), (2), (3) and~(4) follows from \autoref{prp:refl:charYRefl_BXY} by considering $Y=X$.
It is clear that~(1) implies~(6), and that~(5) implies~(4).
By \autoref{prp:alpha:alpha-bimod-reduced}, (1) implies~(5).
That~(6) implies~(5) follows from \autoref{prp:alpha:alpha-bimod-reduced}.
\end{proof}

\begin{thm}
\label{prp:refl:unique_BXY}
Let $X$ and $Y$ be Banach spaces with $Y$ reflexive.
Then:
\begin{enumerate}
\item
$X\tensProj Y^*$ is the unique predual making $\Bdd(X,Y)$ a right dual $\Approx(X)$-module.
\item
$\alpha_{X,Y}\colon\Bdd(X,Y)^{**}\to\Bdd(X,Y)$ is the unique right $\Approx(X)$-module projection.
\end{enumerate}
\end{thm}
\begin{proof}
Since $Y$ is reflexive, it (obviously) has a strongly unique predual.
Hence, the first statement follows from the equivalence of (a) and (b) in \autoref{prp:predualBXY:correspondencePreduals}.
Similarly, there is a unique projection $Y^{**}\to Y$, whence the second statement follows from the equivalence of (a) and (c) in \autoref{prp:compl:complBidual}.
\end{proof}

\begin{cor}
\label{prp:refl:unique_BX}
Let $X$ be a reflexive Banach space.
Then:
\begin{enumerate}
\item
If $F\subseteq\Bdd(X)^*$ is a predual making $\Bdd(X)$ a right dual Banach algebra, then $F=X\tensProj X^*$.
In particular, $F$ is automatically an isometric predual making $\Bdd(X)$ a dual Banach algebra.
\item
If $r\colon\Bdd(X)^{**}\to\Bdd(X)$ is a right $\Approx(X)$-module projection, then $r=\alpha_X$.
In particular, $r$ is automatically a quotient map and multiplicative for both Arens products on $\Bdd(X)^{**}$.
\item
For every right dual Banach algebra $A$ and for every (not necessarily isometric) Banach algebra isomorphism $\varphi\colon\Bdd(X)\to A$, the maps $\varphi$ and $\varphi^{-1}$ are automatically \weakStar{} continuous.
\end{enumerate}
\end{cor}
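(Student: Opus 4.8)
The plan is to obtain all three statements from \autoref{prp:refl:unique_BXY} with $Y=X$, after translating between the language of dual modules and that of dual Banach algebras. A predual of $\Bdd(X)$ makes it a right dual Banach algebra exactly when it makes $\Bdd(X)$, regarded as a right module over itself, a right dual $\Bdd(X)$-module; restricting the action along $\Approx(X)\subseteq\Bdd(X)$, such a predual also makes $\Bdd(X)$ a right dual $\Approx(X)$-module (the converse direction is \autoref{prp:predualBXY:correspondencePreduals}(2), though only this easy direction is needed). Hence, if $F\subseteq\Bdd(X)^*$ makes $\Bdd(X)$ a right dual Banach algebra, then \autoref{prp:refl:unique_BXY}(1) forces $F=X\tensProj X^*$ and shows that $F$ is isometric and makes $\Bdd(X)$ a dual $\Bdd(X)$-$\Bdd(X)$-bimodule, i.e.\ a dual Banach algebra; this is part~(1). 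For part~(2), \autoref{prp:refl:unique_BXY}(2) gives $r=\alpha_X$ and that $r$ is a quotient map; identifying $\Bdd(X,X^{**})$ with $\Bdd(X)$ via reflexivity, \autoref{prp:alpha:alphaBXX-multiplicative} then shows $\alpha_X$ is multiplicative for $\rtArensProd$ (always) and for $\ltArensProd$ (by reflexivity), so $r$ is multiplicative for both Arens products.

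The main new content is part~(3). Let $A$ be a right dual Banach algebra with predual $(G,\delta_G)$, and let $\varphi\colon\Bdd(X)\to A$ be a Banach algebra isomorphism. I would pull back the predual along $\varphi$: the pair $(G,\delta_G\circ\varphi)$ is a predual of $\Bdd(X)$ in the sense of \autoref{dfn:predualBSp:predualBSp}, and under the correspondence of \autoref{prp:predualBSP:summary} its associated concrete predual is $\varphi^*(G)\subseteq\Bdd(X)^*$, where $G$ is identified with the concrete predual of $A$. Since $\varphi$ is multiplicative, right multiplication by $b\in\Bdd(X)$ equals $\varphi^{-1}\circ\rho_{\varphi(b)}\circ\varphi$, where $\rho_{\varphi(b)}$ denotes right multiplication by $\varphi(b)$ on $A$. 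By construction $\varphi$ is a homeomorphism between $\Bdd(X)$ with the pulled-back \weakStar{} topology and $A$ with its \weakStar{} topology, and $\rho_{\varphi(b)}$ is \weakStar{} continuous because $A$ is a right dual Banach algebra; hence so is right multiplication by $b$, and $(G,\delta_G\circ\varphi)$ makes $\Bdd(X)$ a right dual Banach algebra. Part~(1) then yields $\varphi^*(G)=X\tensProj X^*$, so $\varphi^*$ carries the predual $G$ of $A$ onto the canonical predual $X\tensProj X^*$ of $\Bdd(X)$; this says exactly that $\varphi$ is \weakStar{} continuous, and applying the same observation to $(\varphi^{-1})^*=(\varphi^*)^{-1}$ gives that $\varphi^{-1}$ is \weakStar{} continuous.

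The reductions of parts~(1) and~(2) to \autoref{prp:refl:unique_BXY} are routine translations, so the only real obstacle lies in part~(3): verifying that the pulled-back predual $(G,\delta_G\circ\varphi)$ makes $\Bdd(X)$ a right dual Banach algebra. This is precisely the point where multiplicativity of $\varphi$ enters, intertwining right multiplication on $\Bdd(X)$ with the \weakStar{} continuous right multiplication on $A$; once this is established, the uniqueness statement of part~(1) together with the identification of concrete preduals from \autoref{prp:predualBSP:summary} closes the argument purely formally.
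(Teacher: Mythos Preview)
Your argument is correct and matches the paper's intended approach: the corollary is stated without proof, and parts~(1) and~(2) are indeed the specialization $Y=X$ of \autoref{prp:refl:unique_BXY}, with the multiplicativity of $\alpha_X$ for both Arens products coming from \autoref{prp:alpha:alphaBXX-multiplicative}. For part~(3), which has no direct analogue in \autoref{prp:refl:unique_BXY}, your pull-back argument---transporting the predual $G$ of $A$ along $\varphi^*$ and invoking part~(1) to identify $\varphi^*(G)$ with $X\tensProj X^*$---is precisely the standard deduction the paper leaves implicit, and your verification that the pulled-back predual makes $\Bdd(X)$ a right dual Banach algebra via $R_b=\varphi^{-1}\circ R_{\varphi(b)}\circ\varphi$ is the right way to use multiplicativity of $\varphi$.
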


Our \autoref{prp:refl:unique_BX} generalizes \cite[Theorem~4.4]{Daw07DualBAlgReprInj} in two ways.
First, we see that the assumption that $X$ have the approximation property is unnecessary.
Second, the predual is not only unique among the preduals making $\Bdd(X)$ a dual Banach algebra, but also among the preduals making it a right dual Banach algebra.


\begin{rmk}
\label{rmk:refl:GodSapUnique}
If $X$ and $Y$ are reflexive, then $X\tensProj Y^*$ is the strongly unique isometric predual of $\Bdd(X,Y)$ by \cite[Proposition~5.10]{GodSap88DualitySpOpsSmoothNorms}.
In particular, if $X$ is reflexive, then the Banach algebra $\Bdd(X)$ has a strongly unique isometric predual.
\end{rmk}

Thus, if $X$ is reflexive, then $X\tensProj X^*$ is both the strongly unique isometric predual of $\Bdd(X)$ and the strongly unique predual making $\Bdd(X)$ a right dual Banach algebra;
see \autoref{prp:refl:unique_BX}.
One might wonder if $X\tensProj X^*$  is even strongly unique.
The next example shows that this is not the case in general.

\begin{exa}
\label{exa:refl:notStrUnique}
We claim that $\Bdd(\ell_2)$ does not have a strongly unique predual (even though it has a strongle unique \emph{isometric} predual).

The action of $\ell_\infty$ on $\ell_2$ by pointwise multiplication defines an isometric map $\ell_\infty\to\Bdd(\ell_2)$ with \weakStar{} closed image.
Let $P\colon\Bdd(\ell_2)\to\ell_\infty$ be the projection onto the diagonal, 
and denote by $X$ the range of $(1-P)$, which equals the kernel of $P$ 
Since $P$ is \weakStar{}-continuous, it follows that $X$ is \weakStar{} closed in $\Bdd(\ell_2)$, and hence has a predual $F_X\subseteq X^*$.
It is well-known that $\ell_\infty$ does not have a strongly unique predual.
Indeed, on the one hand, $\ell_1$ is its canonical predual (which is actually its strongly unique isometric predual).
On the other hand, it was shown by Pelczynski that $\ell_\infty$ is isomorphic (but not isometrically) to $L_\infty([0,1])$.
The canonical predual of $L_\infty([0,1])$ is $L_1([0,1])$, so $\ell_\infty$ has a predual $Z$ that is isomorphic to $L_1([0,1])$, and thus not isomorphic to $\ell_1$; see page~245 of~\cite{Ban32TheoryLinOperations}.

We obtain two different preduals $\ell_1\oplus F_X$ and $Z\oplus F_X$ of $\ell_\infty\oplus X$.
Hence $\ell_\infty\oplus X$ does not have a strongly unique predual.
Using the isomorphism between $\Bdd(\ell_2)$ and $\ell_\infty\oplus X$, we conclude that $\Bdd(\ell_2)$ does not have a strongly unique predual.
\end{exa}

\appendix


\end{document}